\newtheorem{theorem}{Theorem}[section]
\newtheorem{lemma}[theorem]{Lemma}
\newtheorem{definition}[theorem]{Definition}
\newtheorem{proposition}[theorem]{Proposition}
\newtheorem{addendum}[theorem]{Addendum}
\newtheorem{corollary}[theorem]{Corollary}
\theoremstyle{remark}
\newtheorem{claim}[theorem]{Claim}
\newtheorem{remark}[theorem]{Remark}
\numberwithin{equation}{section}
\DeclareMathOperator{\supp}{supp}
\DeclareMathOperator{\diff}{Diff}
\DeclareMathOperator{\topo}{top}
\DeclareMathOperator{\erg}{erg}
\DeclareMathOperator{\ph}{PH}
\DeclareMathOperator{\phc}{PHC}
\DeclareMathOperator{\mme}{MME}
\begin{document}

\title[Maximal entropy measures for topological skew products]{Maximal entropy measures for non-accessible
topological skew products}
\author{Ali Tahzibi }
\address{}
\curraddr{}
\email{tahzibi@icmc.usp.br}
\thanks{}
\author{Richard cubas}
\address{}
\curraddr{}
\email{cubas.mat@usp.br}
\thanks{}


\subjclass[2010]{Primary 37D30}

\keywords{}

\date{}

\dedicatory{}

\begin{abstract}
In this paper we establish a dichotomy for the ergodic measures of maximal entropy for partially hyperbolic diffeomorphisms with one-dimensional compact center leaves which are virtually skew products over (transitive) Anosov homeomorphism. We prove that if the whole manifold is the unique minimal invariant set saturated by unstable foliation,  then either there exists a unique measure of maximal entropy which is non-hyperbolic or there are exactly two hyperbolic ergodic measures of maximal entropy.
\end{abstract}

\maketitle
\section{Introduction}
Consider a dynamical system $f:M \rightarrow M$, where $M$ is a compact metric space and $f$ is
continuous. The metric entropy $h_{\mu}(f)$ describes the complexity $f$ relative to an $f$-invariant probability measure $\mu$. In this setting, the topological entropy $h_{top}(f)$ is the supremum of $h_{\mu}(f)$ taken over all $f$-invariant Borel probability measures on $M$.
An invariant probability measure  whose entropy coincides with the topological entropy is called  maximal entropy measure (M.M.E). Such measures reflect the complexity level of the whole system, and  classical questions in ergodic theory concerning these measure are as follows: Is the set of maximal measures non empty? How many ergodic maximal measures does the system have, and where are they supported? What are their fine ergodic properties? \\

When $f : M \rightarrow  M$ is a diffeomorphism and $X \subset M$ is a topologically transitive locally maximal hyperbolic set, it is well-known that $f$ has a unique maximal entropy measure \cite{bowen2008equilibrium}. Recently, new approaches were developed for the study of maximal measures of non-uniformly hyperbolic maps and partially hyperbolic diffeomorphisms \cite{buzzi2018measures, sarig2013symbolic, buzzi2012maximal, viana2013physical}. For some recent progress in the more general setting of equilibrium states, see \cite{climenhaga2018equilibrium}.\\

In the present paper, we study  $C^{2}$ dynamically coherent partially hyperbolic diffeomorphisms on compact manifolds with $1$-dimensional center 
where center foliation is uniformly compact, i.e.  the Riemannian volume of the center leaves  is uniformly bounded from above. In this context,  F. Hertz, M. Hertz, A. Tahzibi, and R. Ures \cite{hertz2012maximizing} obtained the following dichotomy for accessible partially hyperbolic diffeomorphisms of 3-dimensional manifolds: either there is a unique entropy maximizing measure, this measure has the Bernoulli property and its center Lyapunov exponent is zero or, there are finitely many (more than one) ergodic entropy maximizing measures, all of them with nonzero center Lyapunov exponent.  When $M$ is a 3-dimensional nil-manifold,  Ures, Viana and J. Yang \cite{ures2021maximal} proved  the following dichotomy for measures of maximal entropy: either  there is a unique maximal measure,  or there exist exactly two ergodic maximal measures, both hyperbolic and whose center Lyapunov exponents have opposite signs.  A. Tahzibi and E. Rocha \cite{rocha2022number}  assuming the existence of a periodic leaf with Morse–Smale dynamics proved  a sharp upper bound for the number of maximal measures in terms of the number of sources and sinks of Morse–Smale dynamics.

In this paper we concentrate mainly on the non-accessible partially hyperbolic diffeomorphisms. We assume a minimality type assumption and prove a sharp dichotomy result: either unique non-hyperbolic measure of maximal entropy or exactly two hyperbolic maximal measures with opposite sign of center Lyapunov exponent. 

\begin{definition}
 A diffeomorphism is (strongly) partially hyperbolic if there is an invariant splitting of the tangent bundle: $T M=E^{s} \oplus E^{c} \oplus E^{u}$ such that $E^{s} \oplus\left(E^{c} \oplus E^{u}\right)$ and $\left(E^{s} \oplus E^{c}\right) \oplus E^{u}$ are dominated, $E^{s}$ is uniformly contracted, and $E^{u}$ is uniformly expanded.
\end{definition}

A partially hyperbolic diffeomorphism is dynamically coherent if both $E^c \oplus E^u$ and $E^c \oplus E^s$ are tangente to invariant foliations and consequently there exists a center foliation tangent to $E^c.$

In this paper we use the notation of $\phc^{r}_{c=1}$ for $C^r$-partially hyperbolic diffeomorhisms with uniformly compact one dimensional center leaves, that is the length of center leaves is uniformly bounded. 

We say a partially hyperbolic $f: M \rightarrow M$ is a topologically {\it Skew product} over (transitive)Anosov if $M$ is a circle bundle over $N$ such that $f: M \rightarrow M$ preserves a fibration $\pi: M \rightarrow N$ and projects to a transitive Anosov homeomorphism on $N$. Observe that the fibration need not to be trivial (See 4.1.1 in Bonatti-Wilkinson \cite{BW}). 

Pugh asked the following question: 
Is every partially hyperbolic diffeomorphism with compact center leaves finitely covered by a partially hyperbolic skew product?
(for a discussion on this question and some results see \cite{hammerlindl2017ergodic}).

We say $f$ is  \textit{virtually  skew product over Anosov} if $f$ is dynamically coherent and finitely covered by a skew product over Anosov. 

Any dynamically coherent partially hyperbolic diffeomorphism in three dimensional manifold with compact center leaves is a virtually skew product over Anosov (See \cite{bohnet2011partially}, \cite{martino}).



\begin{theorem}\label{teo2} 
 Let $f: M \rightarrow M$  be a $C^2-$partially hyperbolic  virtually  skew product over Anosov. If $M$ is the unique invariant minimal $u$-saturated set, then  one and only one of the following occurs:
\begin{itemize}
    \item[(1)]  $f$ admits a unique entropy maximizing measure $\mu$ and $\lambda_c(\mu) = 0$, or
    \item[(2)]   there are exactly two ergodic measures of maximal entropy $\mu^{+}, \; \mu^{-}$, with positive and negative center exponents, respectively.
\end{itemize}
\end{theorem}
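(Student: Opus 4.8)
I would argue as follows.

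\textbf{Reduction and entropy.} Since $f$ is virtually skew product, I first pass to the finite cover on which $f$ lifts to a genuine topological skew product $\pi\colon M\to N$ over a transitive Anosov homeomorphism $g\colon N\to N$ with circle fibers; the hypothesis transfers, because a proper closed invariant $u$-saturated subset upstairs would project to one downstairs, so upstairs $M$ is again the only minimal invariant $u$-saturated set. Now the center leaves are exactly the fibers $\pi^{-1}(y)\cong S^{1}$, which $f$ permutes, and the first-return map of $f$ to a fiber over a $g$-periodic point is a circle diffeomorphism, hence of zero topological entropy; by the Abramov--Rokhlin formula the fiber-relative entropy of $f$ over $\pi$ vanishes for \emph{every} $f$-invariant $\mu$, so $h_{\mu}(f)=h_{\pi_{*}\mu}(g)$. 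Therefore $h_{\mathrm{top}}(f)=h_{\mathrm{top}}(g)$, and since a transitive Anosov homeomorphism is expansive and admits a Markov partition it has a unique measure of maximal entropy $\nu$ (the Bowen--Gibbs state of the zero potential); consequently the MMEs of $f$ are precisely the $f$-invariant probabilities projecting to $\nu$. In particular the set $\mathcal{M}$ of MMEs is a nonempty compact face of the space of invariant probabilities, the center exponent $\mu\mapsto\lambda_{c}(\mu)$ is continuous and affine on $\mathcal{M}$ (the center bundle being one-dimensional), and each $\mu\in\mathcal{M}$ disintegrates as $\mu=\int\mu_{y}\,d\nu(y)$ with $\mu_{y}$ a probability on $\pi^{-1}(y)$ and $(f_{y})_{*}\mu_{y}=\mu_{g(y)}$.

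\textbf{Sign of the exponent and the unique-MME case.} Let $m_{0}$ be the probability on $M$ whose disintegration over $\nu$ is normalized arclength on each fiber. Any Cesàro limit $\mu^{\mathrm{att}}$ of $\frac1N\sum_{n<N}(f^{n})_{*}m_{0}$ is $f$-invariant and projects to $\nu$, hence $\mu^{\mathrm{att}}\in\mathcal{M}$; and because an orientation-preserving circle diffeomorphism $F$ satisfies $\int_{S^{1}}\log F'\,d\mathrm{Leb}\le\log\bigl(\int_{S^{1}}F'\,d\mathrm{Leb}\bigr)=0$ by Jensen, a telescoping computation gives $\lambda_{c}(\mu^{\mathrm{att}})\le 0$. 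Symmetrically, Cesàro limits $\mu^{\mathrm{rep}}$ of the backward averages $\frac1N\sum_{n<N}(f^{-n})_{*}m_{0}$ are MMEs with $\lambda_{c}(\mu^{\mathrm{rep}})\ge 0$. Thus $\min_{\mathcal{M}}\lambda_{c}\le 0\le\max_{\mathcal{M}}\lambda_{c}$ always. If $f$ has a unique MME $\mu$, then $\mu=\mu^{\mathrm{att}}=\mu^{\mathrm{rep}}$, whence $\lambda_{c}(\mu)=0$, which is alternative (1). So assume henceforth that there are at least two ergodic MMEs; it remains to show there are exactly two and that their exponents have opposite signs.

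\textbf{Invariance principle and counting.} Lifting the $C^{2}$ skew product through a Markov coding of $g$, the Avila--Viana invariance principle applies to the center cocycle: for ergodic $\mu\in\mathcal{M}$, $\lambda_{c}(\mu)\le 0$ forces the disintegration $\{\mu_{y}\}$ to be invariant under strong-unstable holonomy, and $\lambda_{c}(\mu)\ge 0$ forces invariance under strong-stable holonomy. Combining this with the fact that every non-atomic invariant measure of a $C^{2}$ circle diffeomorphism has zero Lyapunov exponent, each ergodic MME falls into one of three types: (a) $\lambda_{c}=0$, with $\mu_{y}$ non-atomic and a continuous, bi-holonomy-invariant version of $\{\mu_{y}\}$; (b) $\lambda_{c}<0$, with $\mu_{y}=\delta_{\sigma(y)}$ for a measurable $f$-invariant section $\sigma$ whose graph is (mod $0$) strong-unstable saturated; (c) $\lambda_{c}>0$, the mirror of (b) with strong-stable in place of strong-unstable. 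I would then check that type (a) excludes the presence of any second ergodic MME — the continuous bi-holonomy-invariant disintegration, together with $u$-minimality, makes $\supp\mu=M$ and pins the fiber measures down along a dense unstable leaf, a rigidity incompatible with a measurable invariant section as in (b) or (c); hence in our case \emph{every} ergodic MME is of type (b) or (c), and both types occur since $\mu^{\mathrm{att}}$ and $\mu^{\mathrm{rep}}$ have ergodic components with $\lambda_{c}<0$ and $\lambda_{c}>0$ respectively. Next, there is at most one ergodic MME of type (b): given two attracting sections $\sigma_{1},\sigma_{2}$, Birkhoff averages of continuous functions are constant along strong-stable leaves and — the $\sigma_{i}$ being strong-unstable saturated with unstable conditionals inherited from $\nu$ — along the strong-unstable leaves carrying $(\sigma_{i})_{*}\nu$; $u$-minimality makes those leaves dense, so one joins a $(\sigma_{1})_{*}\nu$-generic point to a $(\sigma_{2})_{*}\nu$-generic point through an (approximate) unstable-then-stable path and concludes $(\sigma_{1})_{*}\nu=(\sigma_{2})_{*}\nu$. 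Finally, because the invariant sections of a monotone family of circle diffeomorphisms alternate, in the cyclic fiber order, between attracting and repelling ones, the number of type-(c) ergodic MMEs equals that of type-(b), hence is also one. Altogether there are exactly two ergodic MMEs, $\mu^{+}$ and $\mu^{-}$, with $\lambda_{c}(\mu^{+})>0>\lambda_{c}(\mu^{-})$ — alternative (2).

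\textbf{Main obstacle.} The crux is the uniqueness of the type-(b) measure: the hypothesis supplies minimality of the unstable foliation only, so the Hopf-type argument has to be arranged so that this one-sided information is enough, the passage to the stable side being delivered by the circle-combinatorics alternation rather than by any $s$-minimality. The rigidity of type (a) — upgrading measurable holonomy-invariance to the non-coexistence statement — is the second delicate point; making the invariance principle available over a merely topological Anosov base, via Markov coding, is technical but routine.
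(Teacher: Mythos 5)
Your skeleton overlaps the paper's at the top level: pass to a cover where $f$ is a genuine skew product over Anosov, note that the MMEs of $f$ are exactly the lifts of the Bowen--Margulis measure of the base, sort them by sign of $\lambda_c$ via the invariance principle, and close with a Hopf-type argument. Your Jensen/Ces\`aro observation producing MMEs with $\lambda_c\le 0$ and $\lambda_c\ge 0$ is a clean, self-contained alternative to the twin-measure construction (Proposition~\ref{Twin measure}) the paper actually invokes for that step, and is a genuinely nice replacement.

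However, in the two places that carry the real weight the proposal has gaps. First, your Hopf argument for uniqueness of the negative-exponent MME joins ``$(\sigma_1)_*\nu$-generic'' and ``$(\sigma_2)_*\nu$-generic'' points by an ``unstable-then-stable path.'' With $\lambda_c<0$ the strong-stable foliation does \emph{not} connect two different unstable leaves; the join must use Pesin center-stable disks, which are only measurable and of non-uniform size, and one must first extract a positive-measure (with respect to the unstable conditionals) subset $J$ of an unstable plaque on which those disks have a uniform lower size bound. More seriously, in order to transport genericity along the unstable leaves at all one needs to know that the unstable conditionals of both measures are equivalent to a common reference family. Your phrase ``unstable conditionals inherited from $\nu$'' does not deliver this for a merely measurable $u$-invariant section: that is precisely the role of the Margulis system $\{m^u_x\}$ and the Ledrappier--Young characterization (Theorem~\ref{existencia de u margulis}, Propositions~\ref{ProposAli} and~\ref{PROPneg}), whose full support is exactly what the hypothesis $\Gamma^u(f)=\{M\}$ is used to guarantee; nothing in your proposal plays that role. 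Second, the alternation argument is not available here: the paper explicitly notes that a Shi--Gan-type alternation works under accessibility together with transitivity and orientability, but accessibility is \emph{not} assumed in Theorem~\ref{teo2}, and for measurable multi-sections over a chaotic Anosov base the sign of the averaged exponent has no fiberwise cyclic-order reading. The paper instead bounds the number of positive-exponent MMEs by applying the $\sup\mathcal{W}^c_s$-twin map once for $f$ and once for $f^{-1}$, observing both compositions land on the same (unique) negative-exponent measure, and then noting that the two preimages of a generic point lie in a common backward-asymptotic center arc. Smaller points: ``type (a) excludes the presence of any second ergodic MME'' is asserted rather than argued (the paper again gets it from Proposition~\ref{PROPneg}), and the cover reduction justification is backwards as stated -- a proper $u$-saturated invariant set upstairs can project onto all of $M$ downstairs; the correct argument (Claim~\ref{minimalidad_levantamento}) shows any such set upstairs is open, using density of $u$-orbits downstairs.
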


Let us mention that in the above theorem we are not demanding minimality of unstable foliation, albeit the minimality would imply the hypothesis.

We recall that Guelman and Martinchich  \cite{guelman2022uniqueness} proved that for a skew product $f \in \phc^1_{c=1}(M)$ if the induced dynamics on $M / \mathcal{F}^c$ (quotient space of center foliation) is transitive and the center foliation is not a virtually trivial bundle then $f$ admits a unique minimal unstable lamination. We emphasize that the non trivial bundle hypothesis rules out many classical examples. In fact if the center foliation admits a global section then we are in the case of virtually trivial bundle. 

We also mention the work of Ures, Viana and Yang \cite{ures2021maximal} where they prove a similar dichotomy as in the above theorem for the partially hyperbolic diffeomorphisms on three dimensional nilmanifolds except for $\mathbb{T}^3$. A main property used in their proof is the uniqueness of minimal unstable lamination and accessibility in their setting, as proved by Hammerlindl-Potrie \cite{hammerlindlpointwise}. In our work, the main difficulty is in the trivial bundle case. We emphasize that we are not assuming accessibility of partially hyperbolic diffeomorphisms. So, the dichotomy given in \cite{hertz2012maximizing} is not available.

The proof of Theorem \ref{teo2} is based on a blend of arguments coming from invariance principle and properties of Margulis measures.

The Avila-Viana's invariance principle \cite{avila2010extremal} is used to understand the disintegration of measures of maximal entropy along center leaves. The first part of the dichotomy in the theorem is based on the invariance principle argument. However, as we have not accessibility property as in \cite{hertz2012maximizing}, we  exploit the minimality hypothesis. The second part of the dichotomy  mainly uses the construction of Margulis family of measures and their properties.

We emphasize that here we obtain exactly two ergodic hyperbolic maximizing measures. Without any minimality hypothesis, even in the presence of accessibility, one may cook up examples with more than two ergodic hyperbolic maximizing measures. By the way if besides accessibility, one assumes  topological transitivity and $f$ preserves the orientation of center leaves, an argument of Shi-Gan (Proposition 3.3 in \cite{yigan}) may yield to the same result as the above theorem \footnote{We thank Yi Shi and Shaobo Gan for communicating this to us.}. Clearly our minimality hypothesis implies topological transitivity.

Following Margulis, the autors in \cite{Ali} introduced the notion of \textit{Margulis measures system} $\{m_x\}_{x\in M}$ whose main feature is the existence of a constant  $D>0$ such that for every $x\in M$ we have  $f_{*}m_{x}=D^{-1}.m_{f(x)}$. The authors built a Margulis system $\{m^{u} _x\}_{x\in M^{u}}$ along unstable foliation for flow-type diffeomorphisms  with minimal stable and unstable foliations and they proved  that $f_{*}m_{x}^{u}=e^{-h_{top}(f)}.m_{f(x) }$. They used these measures to construct an invariant probability measure which was proved to be a  measure of maximal entropy and they obtain a striking dichotomy for diffeomorphisms of flow type (See Theorem 1.1 in \cite{Ali}).\\

In this work, we deal with the construction and characterization of maximal entropy measures for certain partially hyperbolic systems using the concept of Margulis measures.
We analyze the measures  of maximal entropy supported on invariant minimal $u$-saturated sets, i.e. compact sets that are $u$-saturated, $f$-invariant, and minimal with respect to these two properties.

A corollary of the proof of the above theorem is:
\begin{corollary}\label{corolario de teo}
If $M$ is the unique invariant minimal $u-$saturated set then $f$ admits a unique invariant minimal $s$-saturated set.
\end{corollary}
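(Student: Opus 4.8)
The plan is to prove the stronger statement that \emph{every} minimal $s$-saturated set coincides with $M$. Fix a minimal $s$-saturated set $\Lambda\subseteq M$. Since the fibration $\pi$ carries stable manifolds of $f$ onto stable manifolds of the Anosov factor $g$, the image $\pi(\Lambda)$ is a closed $g$-invariant $s$-saturated subset of $N$; transitivity of $g$ makes its stable lamination minimal, so $\pi(\Lambda)=N$. The fibers of $\pi$ are circles and the induced fiber maps are homeomorphisms of the circle, hence of zero topological entropy, so Bowen's inequality for the factor map $\pi$ gives $h_{top}(f)=h_{top}(g)$; and since $g$ is a topological factor of $f|_{\Lambda}$ through $\pi|_{\Lambda}$, we obtain
\[
h_{top}(f|_{\Lambda})\ \ge\ h_{top}(g)\ =\ h_{top}(f)\ \ge\ h_{top}(f|_{\Lambda}),
\]
so $h_{top}(f|_{\Lambda})=h_{top}(f)$. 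Because $f$ is a partially hyperbolic diffeomorphism with one-dimensional center, its metric-entropy map is upper semicontinuous, and this property is inherited by $f|_{\Lambda}$; hence $f|_{\Lambda}$ carries a measure of maximal entropy $\nu$, and by the displayed equality $\nu$ is a measure of maximal entropy of $f$ with $\supp(\nu)\subseteq\Lambda$.

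The second ingredient, which I would extract from the proof of Theorem~\ref{teo2}, is that every ergodic measure of maximal entropy of $f$ has full support. Indeed, for an ergodic maximal measure $\mu$ the Ledrappier--Young entropy formula shows that $h_{\mu}(f)$ is carried along the Pesin unstable lamination of $\mu$ --- which is $\mathcal{W}^{u}$ when $\lambda_{c}(\mu)\le 0$ and $\mathcal{W}^{cu}$ when $\lambda_{c}(\mu)>0$, and which in either case has leafwise volume growth equal to $h_{top}(f)$ --- so the equality $h_{\mu}(f)=h_{top}(f)$ forces the conditional measures of $\mu$ along the leaves of that lamination to agree, up to normalization, with the corresponding Margulis leaf measures, and these have full support in every leaf. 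Consequently $\supp(\mu)$ is $u$-saturated; being closed and $f$-invariant it contains a minimal $u$-saturated set, which by hypothesis is $M$, whence $\supp(\mu)=M$.

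Combining the two steps, $\supp(\nu)=M$, so $M=\supp(\nu)\subseteq\Lambda$ and therefore $\Lambda=M$; since $M$ is itself closed, $f$-invariant and $s$-saturated, this shows $M$ is the unique minimal $s$-saturated set. I expect the main obstacle to be the full-support statement of the second paragraph: for hyperbolic maximal measures it follows from the Margulis/Ledrappier--Young analysis just sketched, but in the non-hyperbolic case $\lambda_{c}(\mu)=0$ one must again invoke the invariance principle exactly as in the proof of Theorem~\ref{teo2}; a more routine point is the reduction through the finite cover realizing $f$ as a genuine skew product, which is harmless since minimal $u$- and $s$-saturated sets project and lift well under finite coverings.
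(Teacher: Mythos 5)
Your strategy proves the strictly stronger claim that every minimal $s$-saturated set is $M$ --- and the paper itself flags this as possibly false: right after the corollary it remarks that the unique minimal $s$-saturated set may be a proper subset of $M$, and Corollary \ref{outro corolario de teo} adds the hypothesis of a zero-exponent maximal measure precisely to upgrade uniqueness to $\Gamma^s(f)=\{M\}$. The step that breaks is your full-support claim for the hyperbolic maximal measure $\mu^+$ with $\lambda_c(\mu^+)>0$. For $\lambda_c(\mu)\le 0$, Propositions \ref{ledrapier} and \ref{ProposAli} identify the unstable conditionals of $\mu$ with the Margulis $u$-system, which is fully supported in each $\mathcal{F}^u(x)$, so $\supp(\mu)$ is $u$-saturated and hence equals $M$; applying this to $f^{-1}$ gives only $s$-saturation of $\supp(\mu)$ when $\lambda_c(\mu)\ge 0$. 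The $cu$-Margulis route you sketch for $\lambda_c>0$ is not available: Proposition \ref{ProposAli} characterises $u$-conditionals only, there is no analogue equating the conditionals along the Pesin $\mathcal{W}^{cu}$-leaves with $cu$-Margulis measures, and the paper never shows that $m^{cu}_x$ is fully supported in $\mathcal{F}^{cu}(x)$ --- only $m^u_x$ is, and that is obtained by flattening $m^{cu}_x$ along the compact center. So, with the tools at hand, $\supp(\mu^+)$ can be a proper $s$-saturated set, in which case your conclusion $\Lambda=M$ fails.

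The first half of your argument (every minimal $s$-saturated $\Lambda$ supports a maximal measure) is sound, though the paper obtains it more directly --- and more sharply, with $\supp(\nu)=\Lambda$ exactly --- from Lemma \ref{L1} applied to the $s$-Margulis family of $f^{-1}$. What is needed instead of full support is a uniqueness-by-contradiction: if $\Lambda_1\neq\Lambda_2$ are two minimal $s$-saturated sets, Lemma \ref{L1} gives distinct ergodic maximal measures $\mu_1\neq\mu_2$ with proper supports $\Lambda_1$ and $\Lambda_2$; case (1) of Theorem \ref{teo2} is then impossible (it yields a unique maximal measure), so case (2) holds and one of $\mu_1,\mu_2$ --- say $\mu_1$ --- has $\lambda_c(\mu_1)<0$; but then Proposition \ref{saturado} together with $\Gamma^u(f)=\{M\}$ forces $\supp(\mu_1)=M$, contradicting $\supp(\mu_1)=\Lambda_1\subsetneq M$. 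This gives uniqueness of the minimal $s$-saturated set without claiming that it equals $M$.
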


In principle $M$ can be the unique invariant minimal $u-$saturated set while the unique invariant minimal $s$-saturated subset is proper.

\begin{corollary}\label{outro corolario de teo}
If $M$ is the unique invariant minimal $u-$saturated set and there exists an ergodic measure of maximal entropy with vanishing center Lyapunov exponent then  $M$ is the unique invariant minimal $s-$saturated set too.
\end{corollary}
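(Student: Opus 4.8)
The plan is to use Theorem~\ref{teo2} to land in the non-hyperbolic alternative, show that the resulting unique maximal measure is fully supported, and then contradict the existence of a \emph{proper} minimal $s$-saturated set by lifting to it the Bowen measure of the induced Anosov base.

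First I would observe that alternative~(2) of Theorem~\ref{teo2} produces only hyperbolic maximal measures, so the existence of an ergodic maximal measure with vanishing center exponent forces alternative~(1): $f$ has a unique entropy maximizing measure $\mu$, and $\lambda_c(\mu)=0$. I then claim $\supp\mu=M$. Since $\lambda_c(\mu)=0$, the Avila--Viana invariance principle \cite{avila2010extremal} --- the mechanism behind case~(1) in the proof of Theorem~\ref{teo2} --- makes the disintegration of $\mu$ along center leaves invariant under stable and unstable holonomies, and a standard consequence is that $\supp\mu$ is both $s$- and $u$-saturated; alternatively one argues directly that the unstable conditionals of a maximal measure are proportional to the Margulis unstable measures of \cite{Ali}, hence positive on relatively open subsets of unstable leaves, so that $\supp\mu$ is $u$-saturated by continuity of $\mathcal{F}^u$. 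Being a nonempty compact $f$-invariant $u$-saturated set, $\supp\mu$ contains an invariant minimal $u$-saturated set, which by hypothesis is $M$; hence $\supp\mu=M$.

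Now let $\Lambda^s$ be the unique invariant minimal $s$-saturated set given by Corollary~\ref{corolario de teo}, and suppose, for a contradiction, that $\Lambda^s\subsetneq M$. Let $\pi\colon M\to M/\mathcal{F}^c$ be the projection onto the center-leaf space, on which $f$ induces a transitive Anosov homeomorphism $g$ (transitivity is guaranteed by the minimality hypothesis, as already noted). Since $\pi$ maps stable leaves of $f$ onto stable sets of $g$, the set $\pi(\Lambda^s)$ is nonempty, closed, $g$-invariant, and saturated by stable sets of $g$; as the stable sets of a transitive Anosov homeomorphism are dense, $\pi(\Lambda^s)=M/\mathcal{F}^c$, so $\pi|_{\Lambda^s}\colon(\Lambda^s,f)\to(M/\mathcal{F}^c,g)$ is a topological factor map. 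Let $\bar\mu$ be the unique measure of maximal entropy of $g$, and choose an $f$-invariant probability $\nu$ on $\Lambda^s$ with $\pi_{*}\nu=\bar\mu$ (the set of such $\nu$ is nonempty, compact and convex, $\pi|_{\Lambda^s}$ being a factor map of compact systems). By the Abramov--Rokhlin formula, $h_\nu(f)\ge h_{\bar\mu}(g)=h_{top}(g)$, and $h_{top}(g)=h_{top}(f)$ because for $\phc^2_{c=1}$ diffeomorphisms the topological entropy is carried by the Anosov factor, the compact circle fibers contributing none; since trivially $h_\nu(f)\le h_{top}(f)$, we get $h_\nu(f)=h_{top}(f)$, so $\nu$ is a maximal measure with $\supp\nu\subseteq\Lambda^s$. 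By the uniqueness in case~(1), $\nu=\mu$, whence $\supp\mu\subseteq\Lambda^s\subsetneq M$, contradicting $\supp\mu=M$. Therefore $\Lambda^s=M$.

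The hard part is the full-support step: this is where the vanishing-exponent hypothesis and the invariance principle genuinely enter, and where one must take care to pass from invariance of the center disintegration to $u$-saturation of the \emph{support} of $\mu$ (invariance of $\mu$ as a measure would not be enough). The auxiliary facts --- density of stable sets of a transitive Anosov homeomorphism, existence of an $f$-invariant lift of $\bar\mu$, and the identity $h_{top}(f)=h_{top}(g)$ for uniformly compact one-dimensional center --- are standard in this setting, though the last one warrants an explicit citation.
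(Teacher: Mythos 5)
Your argument is correct, and it follows a genuinely different route from the paper's. The paper's own proof is two lines: by Theorem~\ref{teo2} the vanishing-exponent hypothesis gives a unique maximal measure $\mu$ with $\supp\mu=M$, and since $\mme(f)=\mme(f^{-1})$ the same is true for $f^{-1}$; then if there were a proper minimal $s$-saturated set $K$, Lemma~\ref{L1} applied to $f^{-1}$ (whose $cs$-invariant Margulis $u$-system exists by Theorem~\ref{existencia de u margulis}, since $f_c$ is transitive) would manufacture a maximal entropy measure with $\supp = K\subsetneq M$, contradicting uniqueness plus full support. So the paper, like you, reduces to ``exhibit a maximal measure supported inside the proper $s$-saturated set,'' but it gets that measure from the Margulis machinery (push forward a Margulis $s$-conditional and average), whereas you get it by projecting to the Anosov quotient, lifting the Bowen measure, and invoking the Abramov--Rokhlin inequality $h_\nu(f)\ge h_{\bar\mu}(g)$ together with $h_{\topo}(f)=h_{\topo}(g)$.

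The trade-off: your route avoids invoking the $s$-Margulis construction at the contradiction step, using instead only soft factor-map/entropy arguments; the cost is that you must justify that $\pi(\Lambda^s)=M_c$, which rests on the fact that a stable $f$-leaf projects \emph{onto} the full stable set of $g$ (not merely into it). This does hold here --- within a $cs$-leaf the compactness of the center fibers and triviality of the bundle over the contractible stable leaf of the base make $\pi|_{\mathcal{F}^s(x)}$ a bijection onto $W^s_g(\pi(x))$, so $\pi(\Lambda^s)$ is a nonempty closed $s$-saturated $g$-invariant set, hence all of $M_c$ by minimality of the stable foliation of a transitive Anosov on $\mathbb{T}^2$ --- but it deserves a sentence, since this is exactly the kind of ``global holonomy'' issue the paper is careful about (see the discussion after the Bohnet remark). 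Finally, you rightly flag the identity $h_{\topo}(f)=h_{\topo}(g)$; in the paper's framework this is most quickly seen from Proposition~\ref{symmetricentropy}: $h_{\topo}(f)=h^u_{\topo}(f)$, and $\pi$ restricted to any unstable leaf is a homeomorphism onto the corresponding unstable leaf of $g$, so $h^u_{\topo}(f)=h_{\topo}(g)$.
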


\section{Partial hyperbolicity and invariance principle}
In this section we review concepts of partial hyperbolicity,  disintegration of measures and invariance principle.

\subsection{General Definitions.} For a diffeomorphism $f: M \rightarrow M$ of a compact manifold to itself we recall the definition of norm and conorm with respect to a subspace of $V \subset$ $T_{x} M$ for some $x \in M:$
\begin{eqnarray*}
\|D f \mid V\|&:=&\max \{\|T f(v)\|: v \in V,\|v\|=1\} \text{ and }\\
\operatorname{conorm}(D f \mid V)&:=&\min \{\|T f(v)\|: v \in V,\|v\|=1\} .
\end{eqnarray*}
A splitting $E \oplus F$ is dominated  if it is nontrivial, $Df-$invariant, and there is some $N \geq 1$ such that, for all $x \in M$ :
$$
\left\|D f^{N} \mid E_{x}\right\|<\frac{1}{2} \operatorname{conorm}\left(D f^{N} \mid F_{x}\right)
$$

We also call $E^{cu}=E^{c}\oplus E^{u}$ and $E^{cs}=E^{c}\oplus E^{s}$.
It is a well-known fact that the strong bundles, $E^{s}$ and $E^{u}$, are uniquely integrable \cite{brin1974partially,invariantmanifolds}. That is, there are invariant strong foliations   $\mathcal{F}^{s}$ and $\mathcal{F}^{u}$ tangent, respectively, to the invariant bundles $E^{s}$ and $E^{u}$ (however, the integrability of $E^{c}$, $E^{cu}$ or $E^{cs}$ is a more delicate matter). We shall say that $f$ is dynamically coherent if there exist invariant foliations tangent to $E^{cu}$ and $E^{cs}$ (and then, to $E^{c}$).

\begin{definition}
    We say $f$ is accessible if given any two points $x, y \in M $ there exists a piecewise $C^1$ path tangent to $E^s \cup E^u$ from $x$ to $y$. More generally two points are in the same accessibility class if there is a path as above connecting the points. clearly, this is an equivalence class. By $AC(x)$ we mean the accessibility class containing $x.$
\end{definition}

In general, we will call $\mathcal{F}^{\sigma}$ any foliation tangent to $E^{\sigma}, \sigma=s, u, c, c s, c u$, whenever it exists and $\mathcal{F}^{\sigma}(x)$ the leaf of $\mathcal{F}^{\sigma}$ passing through $x$. A subset $K$ is $\sigma$-saturated if $\mathcal{F}^{\sigma}(x)\subset K$ for every $x \in K$. A closed subset $K\subset M$ is call $f$-invariant minimal $\sigma$-saturated set if:
\begin{itemize}
    \item[(1)] $K$ is $\sigma$-saturated set;
    \item[(2)] $K$ is an invariant set, i.e. $f(K)=K$;
    \item[(3)] If $K'\subset K$ is closed, $\sigma$-saturated and $f$-invariant set then $K'=\emptyset$ or $K'=K$.
\end{itemize}
We define
\begin{equation} \label{gammasigma} \Gamma^{\sigma}(f):=\{K\subset M: K \text{ is }f\text{-invariant minimal $\sigma$-saturated set}\}.
\end{equation}
Observe that we are taking invariant subsets which are $\sigma$-saturated and not just $\sigma$-saturated sets. Using Zorn's lemma we have that $\Gamma^{\sigma}(f) \neq \emptyset$.  

Given two points $x,y$ in the same leaf of $\mathcal{F}^{\sigma}$ ($\sigma \in \{s, u\}$) we say that $H^{\sigma}: \mathcal{F}^c(x) \rightarrow \mathcal{F}^c(y)$ is a global $\sigma-$holonomy if $H^{\sigma}(z) = \mathcal{F}^c(y) \cap \mathcal{F}^{\sigma}(z).$ 

\subsection{Quotient dynamics and holonomy}
Given a compact foliation $\mathcal{F}$ of a compact manifold $M$. We equip the quotient space $M_{\mathcal{F}}$ with the quotient topology. In the case of a center foliation, let $M_c:=M/\mathcal{F}^{c}$ be the space of the central leaves and $\pi:M\rightarrow M_c$ be the natural projection.  A Riemannian metric induces a Riemannian volume on the leaves of $\mathcal{F}$. The foliation $\mathcal{F}$ is called uniformly compact if all leaves are compact and
$$\sup_{x\in M} vol(\mathcal{F}(x))<+\infty.$$ 
When $\mathcal{F}^c$ is a uniformly compact foliation,  $M_c$ is a compact Hausdorff space.  $f$   induces a homeomorphism $f_c:M_c\rightarrow M_c$, $f_c(\pi(x))=\pi(f(x))$. $f_c$ is called \textit{quotient dynamic} of $f$. 
The main result of \cite{codimension1}  shows that if $f$ is a dynamically coherent partially hyperbolic diffeomorphism with compact central foliation and $\text{dim}(E^u)=1$ then  $\mathcal{F}^{c}$ is uniformly compact (See also \cite{carrasco2021}.)\\ 


In  $\dim(M)=3$, if $f$ is partially hyperbolic with compact center leaves, after considering finite cover we may assume that the invariant bundles are orientable and the dynamics preserves the orientations. In particular we have that $\mathcal{F}^c$ is a Seifert fibration without singular leaves over $\mathbb{T}^2$ and $f_c$ is topologically conjugate to an Anosov automorphism. See Theorem (3) in \cite{hertz2012maximizing}.

\begin{remark}
Let us just mention that by Bohnet \cite{bohnet2011partially} result even more general holds: If $f$ is a $C^1$-partially hyperbolic diffeomorphism with uniformly compact center foliation (any dimension),  $\dim(E^u)=1$ and $E^u$ is orientable, then $M_c$ is homeomorphic to torus and $f_c : M_c \rightarrow M_c$ is topologically conjugate to an Anosov automorphism. 
\end{remark}

Given two points $x, y$ in the same (un)stable leaf, $y \in \mathcal{F}^{\sigma}(x)
, \sigma \in \{s, u\}$ by transversality of $E^c$ and $E^{\sigma}$ there is a holonomy map defined in $U \subset \mathcal{F}^c(x)$, a neighbourhood of $x$ to a neighbourhood of $y$ homeomorphically. In general one may not have a global holonomy being a homeomorphism between $\mathcal{F}^c(x)$ and $\mathcal{F}^c(y)$. See for instance examples of center-stable foliations which are M\"{o}ebius band in Bonatti-Wilkinson \cite{BW} and there is no global homeomorphic holonomy. However, in the case of orientable bundles in three dimensions we have global holonomies which are homeomorphism. Indeed, center-stable or center-unstable leaves are cylinders and the holonomy map $H^{\sigma} :\mathcal{F}^c(x) \rightarrow \mathcal{F}^c(y) $ is defined  by $H(z) := \mathcal{F}^{\sigma}(z) \cap \mathcal{F}^c(y).$

\subsection{Measure Disintegrations and invariance principle} Let $M$ be a Polish space and $\mu$ be a finite Borel measure on $M$. Let $\mathcal{P}$ be a partition of $M$ into measurable sets. Let $\hat{\mu}$ be the induced measure on the $\sigma$-algebra generated by $\mathcal{P}$. A system of conditional measures of $\mu$ with respect to $\mathcal{P}$ is a family $\left\{\mu_{P}\right\}_{P \in \mathcal{P}}$ of probability measures on $M$ such that
 \begin{itemize}
     \item[(1)]  $\mu_{P}(P)=1$ for $\mu$-almost every $P \in \mathcal{P}$, and
     \item[(2)]  given any continuous function $\phi: M \rightarrow \mathbb{R}$, the function $P \mapsto \int \phi d \mu_{P}$ is integrable, and
$$
\int_{M} \psi d \mu=\int_{\mathcal{P}}\left(\int \phi d \mu_{P}\right) d \hat{\mu}(P)
$$
 \end{itemize}
Rokhlin \cite{rokhlin1949fundamental} proved that if $\mathcal{P}$ is a measurable partition, then the disintegration always exists and is essentially unique.  the The disintegration given by Rokhlin theorem varies measurably with the point of the quotient space. \\

A far reaching result to obtain more regularity of conditional measures in the dynamical setting has been obtained by Avila-Viana whose roots are in the work of Ledrappier \cite{Led}. 

We announce an  Invariance principle of Avila-Viana useful  for our purposes here.

\begin{theorem}[Teorema D, \cite{avila2010extremal}]\label{arturviana}
Let $f\in \phc_{c=1}^{1}(M)$. Suppose that given  $y\in \mathcal{F}^{\sigma}(x)$ a $\sigma$-holonomy  defined naturally between $\mathcal{F}^{c}(y)$ and $\mathcal{ F}^{c}(x)$ is a homeomorphism for $\sigma = s, u$. Let $\mu$ be a probability measure such that $\lambda_c(\mu)=0$ and $\mu^*=\pi_{*}\mu$ has a product structure. Then, $\mu$ admits a disintegration $\{\mu^{c}_{x^*}:x^*\in M_{c}\}$ which is $s$-invariant and $u$-invariant and whose conditional measures $\mu^{c}_{x^{*}}$ vary continuously with $x^{*}$ in the support of $\mu^*=\pi_{*}\mu$.
\end{theorem}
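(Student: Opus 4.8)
\textbf{The cocycle.} This statement is the Avila--Viana invariance principle specialised to the bundle of center leaves, so the plan is to recast it in their framework, check the hypotheses, and recall the mechanism (assuming, by the ergodic decomposition, that $\mu$ is ergodic, which suffices here since the conclusion concerns almost everywhere defined conditionals). As $f\in\phc^1_{c=1}(M)$, the foliation $\mathcal{F}^c$ is uniformly compact, so $\pi:M\to M_c$ is a fibre bundle with one--dimensional compact fibres over the compact Hausdorff base $M_c$, and $f$ descends to $f_c$. The projected strong foliations endow $M_c$ with a local product structure which, by the hypothesis that $\mu^*=\pi_*\mu$ has product structure, globalises on $\supp\mu^*$; over this base $f$ acts fibrewise as a cocycle of $C^1$ circle maps whose fibre Lyapunov exponent with respect to $\mu$ is exactly $\lambda_c(\mu)=0$, because $E^c$ is one--dimensional and tangent to the fibres. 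The assumption that $\sigma$--holonomies between center leaves are homeomorphisms for $\sigma=s,u$ is precisely what equips this cocycle with globally defined stable and unstable holonomies $H^s,H^u$ over $\mathcal{F}^s_c,\mathcal{F}^u_c$; this is not automatic (M\"obius--type center--stable leaves as in \cite{BW} obstruct it), hence the hypothesis.

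\textbf{$su$--invariance of the disintegration.} By the symmetry $f\leftrightarrow f^{-1}$, $s\leftrightarrow u$, it suffices to prove $u$--invariance. Writing $\phi_{x^*}:=f|_{\pi^{-1}(x^*)}$, invariance of $\mu$ and essential uniqueness of the Rokhlin disintegration give $(\phi_{x^*})_*\mu^c_{x^*}=\mu^c_{f_c(x^*)}$ for $\mu^*$--a.e.\ $x^*$. For $y^*\in\mathcal{F}^u_c(x^*)$ the global unstable holonomy satisfies, for every $n$, the equivariance $H^u_{x^*,y^*}=f^n\circ H^u_{f_c^{-n}x^*,\,f_c^{-n}y^*}\circ f^{-n}$ (read fibrewise), whence $(H^u_{x^*,y^*})_*\mu^c_{y^*}=(f^n)_*(H^u_{f_c^{-n}x^*,f_c^{-n}y^*})_*\mu^c_{f_c^{-n}y^*}$. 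Backward iteration contracts the fibres over $f_c^{-n}x^*$ and $f_c^{-n}y^*$ exponentially, while the vanishing of $\lambda_c(\mu)$ makes $f^n$ expand the fibres only subexponentially along $\mu$--typical orbits (by Birkhoff, $\frac1n\log\|Df^n|_{E^c}\|\to0$ $\mu$--a.e.); consequently the discrepancy between $(H^u_{f_c^{-n}x^*,f_c^{-n}y^*})_*\mu^c_{f_c^{-n}y^*}$ and $\mu^c_{f_c^{-n}x^*}=(f^{-n})_*\mu^c_{x^*}$ is not amplified as $n\to\infty$, and one concludes $(H^u_{x^*,y^*})_*\mu^c_{y^*}=\mu^c_{x^*}$. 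The same argument for $f^{-1}$ yields $s$--invariance.

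\textbf{Continuity on the support.} Granting that $\{\mu^c_{x^*}\}$ is both $s$-- and $u$--invariant, one upgrades measurability to continuity on $\supp\mu^*$ by a Hopf--type argument: fix a Lusin set $L\subset\supp\mu^*$ of almost full $\mu^*$--measure on which $x^*\mapsto\mu^c_{x^*}$ is continuous; since $H^s,H^u$ are continuous and the disintegration is invariant under them, the value of $\mu^c$ extends continuously along the stable and unstable plaques issuing from $L$, and the globalised product structure of $\supp\mu^*$ lets one reach every point of $\supp\mu^*$ by finitely many such plaques, producing a continuous version of the disintegration.

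\textbf{Main obstacle.} The delicate point is the limiting step in the second part: converting the $C^0$--convergence of the conjugated holonomies into convergence of the pushed--forward conditional measures, with enough uniformity to force equality in the limit. This amounts to controlling the amplification, under $f^n$, of the mismatch created at level $-n$, using only the integrated hypothesis $\lambda_c(\mu)=0$ (no pointwise distortion bound being available), together with the almost everywhere convergence of the backward conditionals $\mu^c_{f_c^{-n}x^*}$. These estimates are the technical core of the invariance principle, and they are the reason the one--dimensional center, the compactness of the center leaves, and the homeomorphic--holonomy hypothesis are all needed.
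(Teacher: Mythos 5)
The paper does not prove this statement: it is cited as Theorem~D of Avila--Viana \cite{avila2010extremal} and invoked as a black box in Part~1 of the proof of Theorem~\ref{teo2}, so there is no internal proof to compare your sketch against. Assessing it on its own terms: your cocycle set-up, the identification of the fibrewise Lyapunov exponent with $\lambda_c(\mu)$, and the observation that the homeomorphic-holonomy hypothesis is exactly what furnishes global $H^s,H^u$ are all correct. But the argument you give for $u$-invariance is circular, not merely technically delicate as your closing paragraph suggests. Set $\Delta_n := (H^u_{f_c^{-n}x^*,f_c^{-n}y^*})_*\mu^c_{f_c^{-n}y^*} - \mu^c_{f_c^{-n}x^*}$, the level-$(-n)$ discrepancy. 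Equivariance gives $\Delta_n = (f^{-n})_*\bigl[(H^u_{x^*,y^*})_*\mu^c_{y^*}-\mu^c_{x^*}\bigr]$ \emph{exactly}, so $(f^n)_*\Delta_n$ is the level-$0$ discrepancy for every $n$: the quantity you propose to control is constant in $n$, and there is nothing for $f^n$ to ``fail to amplify.'' The convergence $H^u_{f_c^{-n}x^*,f_c^{-n}y^*}\to\mathrm{id}$ bounds the gap between $(H^u_{f_c^{-n}x^*,f_c^{-n}y^*})_*\mu^c_{f_c^{-n}y^*}$ and $\mu^c_{f_c^{-n}y^*}$, but says nothing about the gap between $\mu^c_{f_c^{-n}y^*}$ and $\mu^c_{f_c^{-n}x^*}$; a priori the Rokhlin disintegration is only measurable, so conditionals over nearby base points bear no relation to one another, and closing the argument this way would require the very continuity you are trying to establish.

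The Avila--Viana mechanism, with roots in Ledrappier \cite{Led}, is genuinely different. One first shows, by a conditional-entropy or backward-martingale argument exploiting $\lambda_c(\mu)\le 0$, that the conditional measures of $\mu$ on $\mathcal{F}^{cs}$-plaques are carried by single strong-stable plaques, i.e.\ $\mu$ is an \emph{$s$-state}; this concentration is what forces $s$-holonomy invariance of the center disintegration. Applying the same to $f^{-1}$ and using $\lambda_c(\mu)\ge 0$ gives $u$-invariance. The hypothesis $\lambda_c(\mu)=0$ thus enters as an entropy constraint (compare Proposition~\ref{ledrapier}), not through a Birkhoff-type subexponential growth bound compensating a contraction. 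Your final step --- upgrading measurability to continuity on $\supp\pi_*\mu$ by Hopf-type propagation along $s$- and $u$-plaques using the product structure --- is the right idea and is sound once $su$-invariance has been obtained by the correct route.
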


In our proof we use the above theorem for $\mu$ being measure of maximal entropy of $f$ (after lifts) which projects down to the measure of maximal entropy of $f_c$ which is topologically conjugate to Anosov automorphism and $\pi_*(\mu)$ has local product structure.

\section{Measures of maximal entropy, Margulis measures}

A very first result giving a dichotomy for the measures of maximal entropy of accessible partially hyperbolic diffeomorphisms with compact one dimensional center leaves is the following theorem.
\begin{theorem} \label{dichotomy} \cite{hertz2012maximizing}
Let $f : M \rightarrow M$ be a $C^{1+\alpha}$ partially hyperbolic diffeomorphism of a 3-dimensional closed manifold $M$. Assume that $f$ is dynamically coherent with compact one dimensional central leaves and has the accessibility property. Then $f$ has finitely many ergodic measures of maximal entropy. There are two possibilities:
\begin{enumerate}
\item  (rotation type) $f$ has a unique entropy maximizing measure $\mu$. The central Lyapunov exponent $\lambda_c(\mu)$ vanishes and $(f, \mu)$ is isomorphic to a Bernoulli shift,

\item (generic case) $f$ has more than one ergodic entropy maximizing measure, all of which with non-vanishing central Lyapunov exponent. The central Lyapunov exponent $\lambda_c(\mu)$ is nonzero and $(f, \mu)$ is a finite extension of a Bernoulli shift for any such measure $\mu.$ Some of these measures have positive central exponent and some have negative central exponent. \end{enumerate}
Moreover, the diffeomorphisms fulfilling the conditions of the second item form a $C^1-$open and $C^{\infty}-$dense subset of the dynamically coherent partially hyperbolic diffeomorphisms with compact one dimensional central leaves.
\end{theorem}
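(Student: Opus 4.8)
The plan is to reduce to the fibered picture and then split according to the sign of the center exponent. After passing to a finite cover, all bundles are oriented, $f$ preserves the orientations, $\mathcal{F}^c$ is a Seifert fibration over $\mathbb{T}^2$ with no singular leaf, and the quotient dynamics $f_c$ on $M_c$ is topologically conjugate to a hyperbolic toral automorphism $A$; in particular the global $s$- and $u$-holonomies between center circles are homeomorphisms, so Theorem~\ref{arturviana} applies. Two entropy facts get the argument started. First, $h_{top}(f)=h_{top}(f_c)$: since $f_c$ is a topological factor one gets $\ge$, and since the image of a center circle under $n$ iterates is a curve of length $\le nL$ (uniform compactness of $\mathcal{F}^c$), its $(n,\varepsilon)$-separated subsets have at most $nL/\varepsilon$ points, so the topological fiber entropy $h(f,\pi^{-1}(y))$ vanishes and the Ledrappier--Walters relative variational principle gives the reverse inequality. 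Second, $f$ is entropy-expansive, so MMEs exist; and because $h(f,\pi^{-1}(y))\equiv 0$, the Ledrappier--Walters bound forces $\pi_*\mu=\nu$, the unique (Bowen) MME of $f_c$, for every ergodic MME $\mu$ of $f$, and $\nu$ has local product structure.

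Next I would examine the disintegration $\{\mu^c_x\}$ of $\mu$ along $\mathcal{F}^c$ and treat the case $\lambda_c(\mu)=0$. Here Theorem~\ref{arturviana} gives that $\{\mu^c_x\}$ is invariant under the stable and unstable holonomies and varies continuously on $\supp\nu=M_c$. Now accessibility enters: the $su$-holonomy pseudogroup acts transitively between center leaves, so the continuous holonomy-invariant family $\{\mu^c_x\}$ is rigid --- it is reconstructed from a single conditional, which is the unique probability on its fiber invariant under the holonomy stabilizer. Hence any two MMEs with zero center exponent have the same disintegration over the same base measure $\nu$ and therefore coincide: there is at most one MME with $\lambda_c=0$. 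Its support, being $su$-saturated, is all of $M$ by accessibility; and $(f,\mu)$ is an ergodic fibered extension of the Bernoulli base $(f_c,\nu)$ with --- by the invariance principle and accessibility --- a finite or Haar fiber action, so it is isomorphic to a Bernoulli shift by the standard extension theorems.

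Now assume no MME has zero center exponent. If $\lambda_c(\mu)>0$, Oseledets gives the splitting $E^s\oplus(E^c\oplus E^u)$ with all exponents nonzero, so $\mu$ is hyperbolic, its Pesin unstable manifolds are the $\mathcal{F}^{cu}$-leaves, and $\supp\mu$ is a closed $f$-invariant $cu$-saturated set projecting onto $M_c$. The crucial step is to show that $\{\mu^c_x\}$ is purely atomic with an a.e.\ constant number of atoms $N$, bounded independently of $\mu$: a nonatomic part would, after normalization, again feed the invariance-principle machinery in the $E^c$-direction and force $\lambda_c(\mu)=0$; and the atoms of maximal weight are permuted equivariantly by $f$ and by the $u$-holonomies, so by accessibility their number is a holonomy-invariant integer, while a bounded-distortion estimate (this is exactly where the uniform bound on the length of center leaves is used) keeps the minimal atomic weight away from $0$. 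Consequently $\supp\mu$ meets each center circle in at most $N$ points, hence is a finite union of $su$-saturated ``sections'' of bounded complexity; distinct ergodic MMEs of positive exponent have disjoint such supports, whence finiteness, and the same argument applied to $f^{-1}$ handles $\lambda_c<0$. Both signs must occur: the sections carrying a positive-exponent MME are ``source-type'' for the fiber dynamics, between which the circle dynamics must display ``sink-type'' behaviour, producing a $cs$-saturated section that supports a negative-exponent MME, and symmetrically. \emph{This atom-counting and the resulting section structure is the main obstacle}, the point at which the invariance principle, accessibility, and the uniform compactness of $\mathcal{F}^c$ must all be pushed to the limit.

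Finally, for the openness/density addendum I would argue that alternative (2) holds exactly on a $C^1$-open, $C^\infty$-dense set. Having a periodic center leaf whose first-return circle map is Morse--Smale with hyperbolic periodic points of center exponents of both signs is a $C^1$-open condition, since periodic center leaves persist (they correspond to periodic points of the rigid map $A$) and the Morse--Smale property is open; it is $C^\infty$-dense, realized by a $C^\infty$-small perturbation supported near one periodic leaf. Such an $f$ cannot lie in alternative (1): the hypothetical unique zero-exponent MME would have full support by the invariance principle together with accessibility, whereas the atomic source/sink structure on the perturbed leaf produces at least two distinct ergodic MMEs of nonzero center exponent, forcing alternative (2).
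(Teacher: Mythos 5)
This is a cited result: the paper does not prove Theorem~\ref{dichotomy}, it merely states it (attributing it to \cite{hertz2012maximizing}) and, in the surrounding text and in Proposition~\ref{Twin measure}, recounts the two pillars of the published proof --- the invariance principle for the zero-exponent alternative, and the \emph{twin-measure} construction for the nonzero-exponent alternative. Against that, your proposal agrees on the first pillar but replaces the second with a heuristic that does not close.

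Your treatment of the zero-exponent case is broadly aligned with the recounted strategy (pass to a cover, project to the Anosov factor, apply Theorem~\ref{arturviana} to get a continuous $s$- and $u$-invariant center disintegration, then use accessibility). But even here there is a soft spot: from ``the $su$-holonomy pseudogroup acts transitively between fibers'' you jump to ``the disintegration is reconstructed from a single conditional, which is the unique probability invariant under the stabilizer.'' Uniqueness of that stabilizer-invariant probability is exactly what has to be proved; it is not automatic that the group of $su$-return maps on a fiber circle has a unique invariant measure, and in the generic alternative it manifestly does not (it has finite orbits). The published argument distinguishes the two alternatives by analysing precisely whether the stabilizer action has a finite orbit, and that analysis is what you have elided.

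The real gap is in the nonzero-exponent case. The paper's (i.e.\ HHTU's) mechanism for producing ``some with positive and some with negative central exponent'' is Proposition~\ref{Twin measure}: starting from a hyperbolic MME $\eta$ with $\lambda_c(\eta)<0$, the explicit, a.e.-defined map $\beta\colon x\mapsto\sup\mathcal W^c_s(x,f)$ is an isomorphism onto its image, so $\eta^*=\beta_*\eta$ has the same entropy (hence is an MME) and satisfies $\lambda_c(\eta^*)\ge 0$; after ruling out the zero case one gets a genuine positive-exponent MME. Your proposal substitutes for this the assertion that between ``source-type'' sections ``the circle dynamics must display sink-type behaviour, producing a $cs$-saturated section that supports a negative-exponent MME.'' This is intuition, not a proof: there is no reason offered why such a sink section should carry a measure of \emph{maximal} entropy, and indeed without the twin construction one has no control on the entropy of whatever invariant measure lives there. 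Similarly, your finiteness argument (``distinct ergodic MMEs of positive exponent have disjoint such supports, whence finiteness'') does not follow as stated: disjoint finite $su$-sections in a circle bundle can be packed infinitely; the published proof obtains a bound from the invariance principle (equal atomic weights, hence a uniform lower bound on atom mass), not from disjointness alone. In short, your proposal is a different route from the cited one, and the place where it diverges --- replacing the twin-measure construction by a source/sink heuristic --- is exactly where it breaks.
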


In the proof of the above theorem, the authors studied the disintegration of maximal entropy measures along center foliation. The first part of the above dichotomy is related to the so called invariance principle (Theorem \ref{arturviana}).
In the second case (generic case) one has an ergodic measure of maximal entropy with non-vanishing center Lyapunov exponent and the idea is to produce a new measure of maximal entropy with the opposite sign of center Lyapunov exponent. This correspondence is known as {\it twin-measure} construction. Let us assume that center foliation is one-dimensional with orientable compact leaves and suppose that $f$ preserves this orientation.

\begin{proposition} [\cite{hertz2012maximizing},  \cite{Ali}] \label{Twin measure} 
If $\eta$ is an ergodic measure of maximal entropy for $f \in \phc^2_{c=1}(M)$ with $\lambda_c(\eta) < 0$, then there is another $f$-invariant probability measure $\eta^*$ which is isomorphic to $\eta$ and with exponent $\lambda_c\left(\eta^*\right) \geq 0$. Moreover, there is a measurable set $Z \subset M$ with $\eta(Z)=1$ such that the restriction the next is map is an isomorphism:
$$
\beta: Z \rightarrow \beta(Z), x \mapsto \sup \mathcal{W}_s^c(x, f) \text { and } \eta^*=\beta_* \eta,
$$
where $\mathcal{W}_s^c(x, f):=\left\{y \in \mathcal{F}^c(x): \lim \sup _{n \rightarrow \infty} \frac{1}{n} \log d\left(f^n x, f^n y\right)<0\right\}$ and $\sup \mathcal{W}^c(x)$ is the extreme point of $\mathcal{W}^c(x)$ in the positive direction.
\end{proposition}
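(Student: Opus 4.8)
\emph{Construction of $\beta$ and the measure $\eta^*$.} The plan is to build $\beta$ from Pesin theory in the center direction and then check, in turn, equivariance, essential injectivity, and the sign of the center exponent. Since $\eta$ is ergodic with $\lambda_c(\eta)<0$ and $\dim E^c=1$, Oseledets' theorem together with the stable manifold theorem applied to the one-dimensional center bundle produce a full $\eta$-measure set on which $x\mapsto\mathcal{W}_s^c(x,f)$ is a nontrivial open subarc of the center circle $\mathcal{F}^c(x)$ containing $x$, depending measurably on $x$. As the center leaves are compact, one-dimensional and oriented and $f$ preserves this orientation, the closure of $\mathcal{W}_s^c(x,f)$ has a well-defined endpoint in the positive direction, and I set $\beta(x):=\sup\mathcal{W}_s^c(x,f)$. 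The set $\mathcal{W}_s^c(x,f)$ is dynamically defined, so $f(\mathcal{W}_s^c(x,f))=\mathcal{W}_s^c(f(x),f)$; since $f$ carries center leaves to center leaves preserving their orientation, it carries positive endpoints to positive endpoints, i.e.\ $\beta\circ f=f\circ\beta$ on a full-measure set. Hence $\eta^*:=\beta_*\eta$ is $f$-invariant and ergodic, $\beta$ is a factor map $(f,\eta)\to(f,\eta^*)$, and in particular $h_{\eta^*}(f)\le h_{\eta}(f)=h_{top}(f)$; equality, and thus the fact that $\eta^*$ is again an entropy maximizing measure, will follow once $\beta$ is shown to be essentially injective.

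\emph{Essential injectivity.} Inside a fixed center leaf the relation ``$y$ is forward exponentially asymptotic to $z$'' is an equivalence relation (by subadditivity of the metric), so two sets $\mathcal{W}_s^c(y,f)$, $\mathcal{W}_s^c(z,f)$ are either equal or disjoint. If $\beta(x)=\beta(y)=:p$, then $x,y\in\mathcal{F}^c(p)$ and the open subarcs $\mathcal{W}_s^c(x,f)$, $\mathcal{W}_s^c(y,f)$ of that circle share the positive endpoint $p$; two disjoint open subarcs of a circle cannot share an endpoint, so they coincide, i.e.\ $x$ and $y$ are stable-equivalent in their common center leaf. Thus $\beta$ is injective precisely on Borel transversals to the partition of each center leaf into its center-stable arcs. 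To obtain such a transversal of full $\eta$-measure I would invoke the standard fact that an ergodic entropy maximizing measure with nonzero center exponent has atomic disintegration along center leaves (finitely many atoms per leaf), which yields a Borel set $Z$ with $\eta(Z)=1$ meeting each center-stable arc in at most one point. Then $\beta\colon(Z,\eta)\to(\beta(Z),\eta^*)$ is a bi-measurable, measure-preserving bijection, hence an isomorphism; in particular $h_{\eta^*}(f)=h_{\eta}(f)=h_{top}(f)$.

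\emph{Sign of the center exponent.} Suppose, for contradiction, that $\lambda_c(\eta^*)<0$. Then for $\eta$-a.e.\ $x$, putting $p=\beta(x)$, the set $\mathcal{W}_s^c(p,f)$ is a nontrivial open arc of $\mathcal{F}^c(p)=\mathcal{F}^c(x)$ containing $p$, hence an open neighbourhood of $p$ in that circle. But $p=\sup\mathcal{W}_s^c(x,f)$, so the points lying just before $p$ in the positive direction belong to $\mathcal{W}_s^c(x,f)$; therefore $\mathcal{W}_s^c(x,f)\cap\mathcal{W}_s^c(p,f)\neq\emptyset$, and by transitivity of stable equivalence within the leaf this forces $p\in\mathcal{W}_s^c(x,f)$, contradicting that $p$ is an endpoint, and not an interior point, of the open arc $\mathcal{W}_s^c(x,f)$. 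Hence $\lambda_c(\eta^*)\ge 0$.

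I expect the essential-injectivity step to be the main obstacle: the equivalence ``$\beta(x)=\beta(y)\iff$ same center-stable arc'' is elementary, but producing a genuinely Borel transversal $Z$ of full measure relies on the atomicity of the center-leaf disintegration of $\eta$ (and on arranging, or refining so that, distinct atoms sit in distinct stable classes), and it is there that entropy and invariance-principle type inputs really enter; everything else is bookkeeping with Pesin manifolds and the orientation of the center circle.
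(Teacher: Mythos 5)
Your construction of $\beta$, its $f$-equivariance, and especially the argument that $\lambda_c(\eta^*)\ge 0$ are correct; the last is a clean Pesin-theoretic contradiction that, as you note, does not actually require $\beta$ to be injective. (The paper cites the proposition from \cite{hertz2012maximizing} and \cite{Ali} rather than reproving it, so the comparison is against those sources.) The genuine gap is essential injectivity, exactly where you anticipate, and there are two problems with your route. The ``standard fact'' that a hyperbolic ergodic MME has a center-leaf disintegration which is atomic with finitely many atoms per circle is established in \cite{hertz2012maximizing} only under accessibility (it is, essentially, the content of Theorem~\ref{dichotomy}); it is not a consequence of $\lambda_c(\eta)<0$ plus maximal entropy in the generality of the present proposition, so you are invoking an input that need not be available here. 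And even granting atomicity of $\eta^c_x$, finitely many atoms per circle does not by itself produce a Borel set of full $\eta$-measure meeting each center-stable arc at most once: two atoms on the same circle could a priori lie in the same class $\mathcal{W}_s^c(\cdot,f)$, and you acknowledge but do not close this. Also, your phrase ``two disjoint open subarcs of a circle cannot share an endpoint'' is literally false (take $(a,b)$ and $(b,c)$); the correct statement is that two disjoint open subarcs cannot have the \emph{same} positive endpoint, which is what you actually need.

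What makes $\beta$ essentially injective is a softer Poincar\'e-recurrence argument that avoids any atomicity of the full center disintegration. Let $\mathcal{Q}(x):=\mathcal{W}_s^c(x,f)$ be the measurable partition into Pesin center-stable arcs and $\{\eta_{\mathcal{Q}(x)}\}$ the Rokhlin conditionals of $\eta$. For $\epsilon>0$ let $\phi_\epsilon(x)$ be the infimum of $\diam(I)$ over compact subarcs $I\subset\mathcal{Q}(x)$ with $\eta_{\mathcal{Q}(x)}(I)\ge 1-\epsilon$. A compact subarc of a Pesin center-stable arc is uniformly exponentially contracted by the forward iterates, and $f_*\eta_{\mathcal{Q}(x)}=\eta_{\mathcal{Q}(fx)}$, so $\phi_\epsilon(f^n x)\to 0$ for $\eta$-a.e.\ $x$. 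If $\phi_\epsilon\ge\delta>0$ on a set of positive $\eta$-measure, Poincar\'e recurrence would send a.e.\ point of that set back to it infinitely often, contradicting the decay; hence $\phi_\epsilon=0$ a.e.\ for every $\epsilon>0$, each $\eta_{\mathcal{Q}(x)}$ is a Dirac mass, and $\beta$ is essentially injective on the full-measure set of these Dirac points. This is the piece your outline is missing; with it the remainder of your plan goes through.
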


The above proposition uses in an essential way the one-dimensionality of center leaves. In the setting of Theorem \ref{dichotomy}, the authors proved that the twin measure $\eta^*$ is also hyperbolic, i.e $\lambda_c(\eta^*) > 0.$

\subsection{Entropy along unstable foliations}
A measurable partition  $\xi$ is called increasing and sub-ordinated to $\mathcal{F}$ if it satisfies:
\begin{itemize}
    \item[(a)] $\xi(x) \subseteq \mathcal{F}(x)$ for $\mu$-almost every $x$,
    \item[(b)] $f^{-1}(\xi) \geq \xi$ (increasing property),
    \item[(c)] $\xi(x)$ contains an open neighbourhood of $x$ in $\mathcal{F}(x)$ for $\mu$-almost every $x$.
\end{itemize}

The existence of a measurable increasing partition sub-ordinated to an invariant lamination in general is a delicate problem. For a uniformly expanding foliation invariant by a diffeomorphisms, we always have such a partition (See \cite{yang2016entropy}.) We say an invariant foliation $\mathcal{F}$ is uniformly expanding if there exists $\alpha>1$ such that $\forall x \in M,\left\|D f \mid T_{x}(\mathcal{F}(x))\right\|>\alpha$. Observe that if $\mathcal{F}$ is an expanding foliation, we have more useful properties for a partition $\xi$ which is increasing and sub-ordinated:
\begin{itemize}
    \item[(d)]  $\bigvee_{n=0}^{\infty} f^{-n} \xi$ is the partition into points;
    \item[(e)] the largest $\sigma$-algebra contained in $\bigcap_{n=0}^{\infty} f^{n}(\xi)$ is $\mathcal{B}_{\mathcal{F}}$ where $\mathcal{B}_{\mathcal{F}}$ is the $\sigma$-algebra of $\mathcal{F}$-saturated measurable subsets (union of entire leaves).
\end{itemize}

Let $\left\{\mu_{\xi(x)}\right\}_{x \in M}$ be the disintegration of $\mu$ w.r.t. $\xi$. The entropy with respect to $\mathcal{F}$ is defined \cite{LedrapierII} as:
$$h_{\mu}(f, \mathcal{F})=-\int \log \mu_{\xi(x)}\left(f^{-1} \xi(f x)\right) d \mu.$$
When  $f$ is partially hyperbolic system and $\mathcal{F}=\mathcal{F}^{u}$ we denote $h_{\mu}(f,\mathcal{F}^{u})$ by $h^{u}_{\mu}(f)$.
Denote the set of ergodic measures  by $\mathbb{P}_{erg}(f)$. The next result follows from Theorem $C^{\prime}$ in \cite{LedrapierII} and items (i)-(iii) after the statement of the theorem.

\begin{proposition}[Ledrappier-Young]\label{ledrapier}
Let $f \in\text{Diff}^{2}(M)$ be partially hyperbolic with strong unstable foliation $\mathcal{F}^{u} .$ For any $\mu \in \mathbb{P}_{\operatorname{erg}}(f)$,
$$
h_{\mu}^u\left( f\right) \leq h_{\mu}(f)
$$
If all center Lyapunov exponents of $\mu$ are non-positive, then the above inequality is an equality.
\end{proposition}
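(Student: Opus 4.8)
The plan is to deduce the statement directly from the Ledrappier--Young entropy formula and the structure of intermediate unstable laminations, which is precisely the content of Theorem $C'$ and the items (i)--(iii) following it in \cite{LedrapierII}; the only work is to align the objects in the excerpt with those of Ledrappier--Young.

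First I would fix an ergodic $\mu\in\mathbb{P}_{\mathrm{erg}}(f)$ and consider its Oseledets splitting $T_xM=\bigoplus_{i=1}^{p}E_i(x)$, with distinct Lyapunov exponents $\lambda_1>\cdots>\lambda_p$ and multiplicities $m_i=\dim E_i$. Since $E^s\oplus E^c\oplus E^u$ is dominated and $E^u$ is uniformly expanded, the exponents carried by $E^u$ are positive and are strictly larger than every exponent along $E^c$; hence, in the decreasing ordering, there is an index $k$ with $E_1\oplus\cdots\oplus E_k=E^u$ $\mu$-a.e., and $\lambda_k>\lambda_{k+1}$ separates $E^u$ from $E^c\oplus E^s$. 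Let $u\ge k$ be the number of positive exponents. Ledrappier--Young construct, for $1\le i\le u$, intermediate unstable laminations $W^i$ with $T_xW^i=E_1(x)\oplus\cdots\oplus E_i(x)$, and by unique integrability of $E^u$ the lamination $W^k$ coincides $\mu$-a.e. with $\mathcal{F}^u$. As $\mathcal{F}^u$ is uniformly expanding it admits an increasing measurable partition subordinate to it \cite{yang2016entropy}, so $h^u_\mu(f)=h_\mu(f,\mathcal{F}^u)$ is well defined and equals the partial (leafwise) entropy of $\mu$ along $W^k$ in the sense of \cite{LedrapierII}.

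Next I would invoke Theorem $C'$ of \cite{LedrapierII} with its accompanying items: there are transverse dimensions $0\le\delta_i\le m_i$, $1\le i\le u$, so that the pointwise dimension of $\mu$ along $W^i$ is $\gamma_i=\delta_1+\cdots+\delta_i$, and the partial entropies $h_i:=h_\mu(f,W^i)$ satisfy $h_i-h_{i-1}=\lambda_i\,\delta_i$, with $h_0=0$ and $h_u=h_\mu(f)$ (the full unstable foliation carries all the entropy). Since $\lambda_i>0$ and $\delta_i\ge 0$ for every $i\le u$, the sequence $(h_i)_{0\le i\le u}$ is non-decreasing, and because $k\le u$ we obtain
$$ h^u_\mu(f)=h_k=\sum_{i=1}^{k}\lambda_i\,\delta_i\;\le\;\sum_{i=1}^{u}\lambda_i\,\delta_i=h_u=h_\mu(f), $$
which is the asserted inequality. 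Finally, for the equality clause: if all center Lyapunov exponents of $\mu$ are non-positive, then every positive exponent of $\mu$ comes from $E^u$, so $u=k$ and $W^u=W^k=\mathcal{F}^u$; the chain above collapses and $h^u_\mu(f)=h_k=h_u=h_\mu(f)$.

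The main obstacle is not a computation but the identification carried out in the second paragraph: one must verify that the Ledrappier--Young top intermediate lamination $W^k$ is genuinely the strong unstable foliation $\mathcal{F}^u$ and that the leafwise entropy $h_\mu(f,\mathcal{F}^u)$ computed with an $\mathcal{F}^u$-subordinate increasing partition (as in the excerpt) coincides with the partial entropy $h_k$ used in \cite{LedrapierII}. This is exactly where uniform expansion of $E^u$, the domination $E^c\oplus E^s$ vs. $E^u$, and the existence result of \cite{yang2016entropy} enter; once these are in place, the conclusion is a direct reading of the Ledrappier--Young formula, with roots in \cite{Led}.
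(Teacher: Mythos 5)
Your proposal is correct and follows exactly the route the paper takes: the paper's ``proof'' is simply the citation to Theorem $C'$ of Ledrappier--Young and the items (i)--(iii) following it, and your argument is a faithful unpacking of that citation --- identifying $\mathcal{F}^u$ with the top intermediate lamination $W^k$ via domination, and reading the inequality $h_k\le h_u=h_\mu(f)$ off the increments $\lambda_i\delta_i\ge 0$, with equality when all positive exponents lie in $E^u$.
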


One can define the unstable topological entropy as:
$$h^u_{\topo}(f) = \sup\{ h^{u}_{\nu}(f): \nu \in \mathbb{P}_{erg}(f)\}.$$
This variational principle definition is equivalent to a topological definition which we will not repeat here. See (\cite{hu2017unstable}) for details. 
We say that a $f$-invariant measure  $\mu$ is measure of maximal $u$-entropy ($u$-mme) if only if 
$h^{u}_{\mu}(f)= h^u_{top}(f).$
Define
$$\mme^{u}(f)=\left\{\mu: \mu \text{ is an measure of maximal } u\text{- entropy}\right\}.$$ Similarly, define $MME^s(f)$ as $u$-maximal measures of $f^{-1}.$
We remember that a careful semi-continuity argument (See \cite{yang2016entropy}) shows that for any $C^1-$partially hyperbolic diffeomorphism (clearly no restriction on the center dimension), maximal unstable entropy measure always exist.

 Let  $f\in \phc^{2}_{c=1}(M)$, using one-dimensionality of center bundle, one can prove the following:

 \begin{proposition} \label{symmetricentropy}
     Let $f\in \phc^{2}_{c=1}(M)$, then $h^u_{top}(f) = h_{top}(f)$ and 
     $$\mme(f)= \mme^u(f) \cup \mme^s(f).$$
 \end{proposition}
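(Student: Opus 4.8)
The plan is to prove the two assertions separately, using the Ledrappier--Young formula from Proposition \ref{ledrapier} together with the symmetry $f \leftrightarrow f^{-1}$ that is available because $\dim E^c = 1$. First I would establish $h^u_{\topo}(f) = h_{\topo}(f)$. The inequality $h^u_{\topo}(f) \le h_{\topo}(f)$ is immediate from Proposition \ref{ledrapier}, since $h^u_\mu(f) \le h_\mu(f) \le h_{\topo}(f)$ for every ergodic $\mu$, and taking the supremum gives the bound. For the reverse inequality, fix an ergodic measure of maximal entropy $\mu$ for $f$ (which exists, e.g. by expansivity/upper semicontinuity of entropy for these systems, or simply because $h_{\topo}(f)$ is attained). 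Since $\dim E^c = 1$, the center Lyapunov exponent $\lambda_c(\mu)$ is a single number and there are three cases. If $\lambda_c(\mu) \le 0$, then all center exponents are non-positive and Proposition \ref{ledrapier} gives $h^u_\mu(f) = h_\mu(f) = h_{\topo}(f)$, so $h^u_{\topo}(f) \ge h_{\topo}(f)$ and we are done. If $\lambda_c(\mu) > 0$, then $\lambda_c(\mu)$ is non-negative as an exponent of $f^{-1}$ in the sense that the center behaves like a stable direction for $f^{-1}$; more precisely, apply Proposition \ref{ledrapier} to $f^{-1}$, whose strong unstable foliation is $\mathcal{F}^s$ and for which $\mu$ is still a measure of maximal entropy with $h_{\topo}(f^{-1}) = h_{\topo}(f)$. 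Since the center exponent of $\mu$ for $f^{-1}$ is $-\lambda_c(\mu) < 0 \le 0$, Proposition \ref{ledrapier} applied to $f^{-1}$ yields $h^s_\mu(f) := h^u_\mu(f^{-1}) = h_\mu(f^{-1}) = h_{\topo}(f)$, hence $h^s_{\topo}(f) \ge h_{\topo}(f)$. But $h^s_{\topo}(f) = h^u_{\topo}(f^{-1}) \le h_{\topo}(f^{-1}) = h_{\topo}(f)$ by the already-proven first inequality applied to $f^{-1}$, so in fact $h^u_{\topo}(f) = h^s_{\topo}(f) = h_{\topo}(f)$ (using that the unstable topological entropies of $f$ and $f^{-1}$ agree, which again follows once both are sandwiched by $h_{\topo}$). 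This establishes $h^u_{\topo}(f) = h_{\topo}(f)$, and symmetrically $h^s_{\topo}(f) = h_{\topo}(f)$.

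Next I would prove the set equality $\mme(f) = \mme^u(f) \cup \mme^s(f)$, where by Proposition \ref{symmetricentropy}'s notation $\mme^s(f) = \mme^u(f^{-1})$. It suffices to prove the statement for ergodic measures and then pass to ergodic decompositions, since all the entropy functionals involved are affine in $\mu$ and the equality $h^u_{\topo}(f) = h_{\topo}(f)$ just obtained lets the maximal-entropy conditions be checked fiberwise. So let $\mu$ be ergodic. For the inclusion $\mme^u(f) \cup \mme^s(f) \subseteq \mme(f)$: if $\mu \in \mme^u(f)$ then $h^u_\mu(f) = h^u_{\topo}(f) = h_{\topo}(f)$, and since $h^u_\mu(f) \le h_\mu(f) \le h_{\topo}(f)$ by Proposition \ref{ledrapier}, all inequalities are equalities and $h_\mu(f) = h_{\topo}(f)$, i.e. $\mu \in \mme(f)$; the case $\mu \in \mme^s(f)$ is identical using $f^{-1}$. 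For the reverse inclusion $\mme(f) \subseteq \mme^u(f) \cup \mme^s(f)$: suppose $h_\mu(f) = h_{\topo}(f)$. Split into cases on $\lambda_c(\mu)$. If $\lambda_c(\mu) \le 0$, Proposition \ref{ledrapier} gives $h^u_\mu(f) = h_\mu(f) = h_{\topo}(f) = h^u_{\topo}(f)$, so $\mu \in \mme^u(f)$. If $\lambda_c(\mu) \ge 0$, the center exponent for $f^{-1}$ is $\le 0$, so applying Proposition \ref{ledrapier} to $f^{-1}$ gives $h^u_\mu(f^{-1}) = h_\mu(f^{-1}) = h_{\topo}(f^{-1}) = h^u_{\topo}(f^{-1}) = h^s_{\topo}(f)$, so $\mu \in \mme^s(f)$. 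Since every ergodic $\mu$ falls into at least one of these two cases (when $\lambda_c(\mu) = 0$ it lands in both), this finishes the ergodic case, and the general case follows by ergodic decomposition: an $f$-invariant $\mu$ with $h_\mu(f) = h_{\topo}(f)$ has $h_\nu(f) = h_{\topo}(f)$ for a.e.\ ergodic component $\nu$ (since $h_\bullet(f)$ is the integral of the ergodic components' entropies and is bounded by $h_{\topo}(f)$), so a.e.\ component lies in $\mme^u(f) \cup \mme^s(f)$, and conversely a measure all of whose ergodic components are $u$- or $s$-maximal has $h^u$ or the appropriate mix equal to the topological value.

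The main obstacle I anticipate is not the entropy bookkeeping but justifying the use of the Ledrappier--Young equality clause "if all center Lyapunov exponents of $\mu$ are non-positive then $h^u_\mu(f) = h_\mu(f)$" in exactly the form stated: one must be careful that $\mu$ is genuinely ergodic when invoking Proposition \ref{ledrapier} (the equality clause there is stated for ergodic measures), and that the strong unstable foliation of $f^{-1}$ is the strong stable foliation of $f$ with the matching increasing partition theory (this needs $\mathcal{F}^s$ to be a uniformly expanding foliation for $f^{-1}$, which it is). A secondary point requiring care is the passage to ergodic components for the measure equality: one needs that $h^u_\mu(f)$ is affine in $\mu$ (true, being an integral of the fiber entropy) and that the maximality defect $h_{\topo}(f) - h^u_\mu(f) \ge 0$ is preserved, so that $h^u_\mu(f) = h^u_{\topo}(f)$ forces a.e.\ ergodic component to be $u$-maximal. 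Both are standard but should be spelled out. The identity $h_{\topo}(f^{-1}) = h_{\topo}(f)$, the fact that $E^u(f^{-1}) = E^s(f)$, and that $\lambda_c^{f^{-1}}(\mu) = -\lambda_c^f(\mu)$ are the structural facts that make the whole argument symmetric, and I would state them explicitly at the start.
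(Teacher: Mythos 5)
There is a genuine gap in your argument for $h^u_{\topo}(f) = h_{\topo}(f)$. Your case analysis is fine when the ergodic m.m.e.\ $\mu$ has $\lambda_c(\mu) \le 0$: Ledrappier--Young directly gives $h^u_\mu(f) = h_\mu(f) = h_{\topo}(f)$, hence $h^u_{\topo}(f) \ge h_{\topo}(f)$. But when $\lambda_c(\mu) > 0$, passing to $f^{-1}$ and applying Proposition~\ref{ledrapier} only yields $h^s_\mu(f) = h_\mu(f) = h_{\topo}(f)$, and therefore $h^s_{\topo}(f) = h_{\topo}(f)$. That is a statement about the \emph{stable} topological entropy of $f$, not the unstable one. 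Your attempt to bridge this --- ``using that the unstable topological entropies of $f$ and $f^{-1}$ agree, which again follows once both are sandwiched by $h_{\topo}$'' --- is circular: at this point you have only established the \emph{upper} bound $h^u_{\topo}(f) \le h_{\topo}(f)$, so nothing is ``sandwiched'' on the $u$-side. There is no a priori reason to equate $h^u_{\topo}(f)$ with $h^u_{\topo}(f^{-1}) = h^s_{\topo}(f)$, and indeed producing an ergodic measure with $h^u = h_{\topo}$ genuinely requires exhibiting some m.m.e.\ with \emph{non-positive} center exponent.

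The paper's proof supplies exactly the missing ingredient: when the m.m.e.\ at hand has positive center exponent, it invokes the twin measure construction (Proposition~\ref{Twin measure}) to replace it by an isomorphic $f$-invariant measure of the same entropy whose center exponent is non-positive, and only then applies Ledrappier--Young to that twin. Without this step, $h^u_{\topo}(f) \ge h_{\topo}(f)$ is unproven in the case $\lambda_c(\mu) > 0$. (Your treatment of the set equality $\mme(f) = \mme^u(f) \cup \mme^s(f)$ via the same case split is fine, and your extra care about ergodic decompositions and affinity of $h^u_\bullet$ is a reasonable supplement to the paper's terser argument, but it does not repair the hole in the first part.)
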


\begin{proof}
    First let us show that $h^u_{top}(f) = h^u_{top}(f^{-1}) =  h_{top}(f)$. Indeed by definition, it is clear that $h^u_{top}(f) \leq h_{top}(f).$ Now take any ergodic maximal entropy measure $\mu$. If the central Lyapunov exponent of $\mu$ is non-negative, then using twin measure construction (Proposition \ref{Twin measure}) substitute it with another ergodic measure with the same entropy and non-negative center exponent. Now using above proposition \ref{ledrapier} we conclude that $h_{top}(f) = h_{\mu}(f)= h^u_{\mu}(f) \leq h^u_{top}(f).$
    So we end up proving that $h^u_{top}(f) = h_{top}(f).$ Asymmetric argument ends the proof of our claim.

    To conclude the proof of proposition, taker any $\mu \in MME(f).$ If $\lambda^c(\mu) \leq 0$ then using Proposition we get $h^u_{\mu} = h_{\mu}(f) = h_{top}(f) = h^u_{top}(f)$ which means $\mu \in MME^u(f).$ In the case where $\mu$ has positive center Lyapunov exponent we just change $f$ to $f^{-1}$ and conclude that $\mu \in MME^s(f).$

    \end{proof}

 \begin{remark}
  In the above proposition, we have shown that  $\mme^{u}(f)\subseteq \mme(f)$. It should be emphasized that this inclusion may be proper. See for instance section 4.3.1 in \cite{tahzibi2021unstable}  where the author gives an example of maximal entropy measure which is not $u-$maximal entropy. 
  We also emphasize that the inclusion $\mme^{u}(f)\subseteq \mme(f)$ uses the one-dimensionality of the center bundle. See in (\cite{ures2021maximal}) an example of a higher center dimension where this inclusion does not hold.
\end{remark}

 
\subsection{ Margulis system of measures}
 We will work with families of measures carried by the leaves invariant foliations.
 \begin{definition}
 Given a foliation $\mathcal{F}$ of some manifold $M$, a measurable system of measures on $\mathcal{F}$ is a family $\left\{m_{x}\right\}_{x \in M}$ such that:
 \begin{itemize}
     \item[(i)] for all $x \in M, m_{x}$ is a Radon measure on $\mathcal{F}(x)$;
     \item[(ii)]  for all $x, y \in M, m_{x}=m_{y}$ if $\mathcal{F}(x)=\mathcal{F}(y)$;
     \item[(iii)] $M$ is covered by foliation charts $B$ such that: $x \mapsto m_{x}\left(\phi \mid \mathcal{F}_{B}(x)\right)$ is measurable on $M$ for any $\phi \in C_{c}(B)$.
 \end{itemize}
 \end{definition}

The Radon property (i) means that each $m_{x}$ is a Borel measure and is finite on compact subsets of the leaf $\mathcal{F}(x)$ (here, and elsewhere, we consider the intrinsic topology on each leaf).\\

If $\left\{\mu_{x}\right\}_{x \in M}$ is the disintegration of some probability measure $\mu$ along a foliation $\mathcal{F}$ (understood as projective class) and if $\left\{m_{x}\right\}_{x \in M}$ is a measurable system of measures on $\mathcal{F}$, we will say that they coincide if  for $\mu$-a.e. $x \in M, \mu_{x}$ and $m_{x}$ are proportional.

\begin{definition}
Assume that $\mathcal{F}$ is a foliation which is invariant under some diffeomorphism $f: M \rightarrow M$, i.e., for all $x \in M, f(\mathcal{F}(x))=\mathcal{F}(f(x)) .$ A measurable system of measures $\{m_{x}\}_{x \in M}$ on $\mathcal{F}$ is dilated if there is some number $D>1$ such that for all $x \in M$ :
$$
f_{*} m_{x}=D^{-1} m_{f(x)}
$$
$D$ is called the dilation factor. We call the family $\left\{m_{x}\right\}_{x \in M}$ a Margulis system on $\mathcal{F}$ and the measures $m_{x}$ the Margulis $\mathcal{F}$-conditionals.
\end{definition}

We need also some invariance under holonomy for Margulis family of measures. 

\begin{definition}
Let $\mathcal{F}_{1}, \mathcal{F}_{2}$ be foliations which are invariant under some diffeomorphism $f \in \operatorname{Diff}^{1}(M) .$ Assume that $\left\{m_{x}\right\}_{x \in M}$ is a Margulis system of measures on $\mathcal{F}_{1}$ and that $\mathcal{F}_{2}$ is transverse to $\mathcal{F}_{1}$. The system $\left\{m_{x}\right\}_{x \in M}$ is invariant  along $\mathcal{F}_{2}$ if, for all $\mathcal{F}_{2}$-holonomies $h: U \rightarrow V$ with $U, V$ contained in $\mathcal{F}_{1}$-leaves :
\begin{eqnarray}
h_{*}\left(m_{x} \mid U\right)=m_{h(x)} \mid V\;\text{for any }\;x \in U.
\end{eqnarray}

\end{definition}

\section{Building Margulis system of measures}
In this section, we build Margulis measures on strong unstable and strong stable leaves  for  $C^{2}$ partially hyperbolic diffeomorphisms with compact one-dimensional center leaves. More precisely:
\begin{theorem}\label{existencia de u margulis}
 Let $M$ be a closed manifold and $f\in \phc_{c=1}^{2}(M)$. If the quotient 
map $f_{c}:M_{c}\longrightarrow M_{c}$ is  topologically transitive then there is a  measurable system (indeed continuous) of measures $\{m^{u}_{x}\}_{x\in M}$ such that:
 \begin{itemize}
      \item[(1)] each $m^{u}_x$ is a Radon measure without atoms and fully supported in $\mathcal{F}^{u}(x)$;
       \item[(2)] $f^{*}m^{u}_{f(x)}=e^{h_{top}(f)}.m_{x}^{u}$;
       \item[(3)] the system $\{m^{u}_{x}\}_{x\in M}$ is $cs$-invariant and locally uniformly finite.
  \end{itemize}
  Furthermore, if $\mu$ is an ergodic measure with $\lambda_c(\mu)\leq 0$, $\mu$ is of maximal entropy if and only if the disintegration $\{\mu^{u}_{ x}\}_{x\in M}$ of $\mu$ along unstable foliation is equivalent to $\{m^{u}_{x}\}_{x\in M}$.
\end{theorem}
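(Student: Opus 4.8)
The plan is to build the unstable Margulis system in two stages and then identify maximal-entropy measures via Ledrappier--Young.

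\textbf{Stage 1: the quotient Margulis measure.}
First I would pass to the quotient dynamics $f_c : M_c \to M_c$. Since $f \in \phc^2_{c=1}(M)$ and $\mathcal{F}^c$ is uniformly compact (by \cite{codimension1}), $M_c$ is a compact Hausdorff space and, after lifting to a finite cover if necessary, $f_c$ is topologically conjugate to an Anosov homeomorphism (the remark after \cite{bohnet2011partially}); in particular $f_c$ is an expansive homeomorphism with specification-like properties, so it has a unique measure of maximal entropy and, more importantly, Margulis' original construction gives a system of conditional measures $\{\bar m^u_{\bar x}\}$ on the (one-dimensional, since $\dim E^u = 1$) unstable leaves of $f_c$ with $\bar f_c^{\,*}\bar m^u_{f_c(\bar x)} = e^{h_{top}(f_c)}\,\bar m^u_{\bar x}$, fully supported, non-atomic, and invariant under stable holonomy. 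I would also invoke the standard fact that $h_{top}(f) = h_{top}(f_c)$: the fiber entropy vanishes because the center is one-dimensional and compact (Ledrappier--Walters / Bowen), so $h_{top}(f) = h_{top}(f_c)$.

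\textbf{Stage 2: lifting to $M$.}
Next I would lift $\{\bar m^u_{\bar x}\}$ to a system $\{m^u_x\}_{x\in M}$ on $\mathcal{F}^u$. The projection $\pi : M \to M_c$ restricts on each unstable leaf $\mathcal{F}^u(x)$ to a homeomorphism onto $\mathcal{F}^u_c(\pi(x))$ (because $E^u$ is transverse to $E^c$ and one-dimensional, unstable leaves of $f$ project bijectively to unstable leaves of $f_c$ — this uses dynamical coherence and uniform compactness of the center). Define $m^u_x := (\pi|_{\mathcal{F}^u(x)})^{*}\,\bar m^u_{\pi(x)}$. Then (1) is immediate from the corresponding properties downstairs; (2) follows because $\pi$ conjugates $f$ to $f_c$ and $f$ expands $\mathcal{F}^u$-leaves, transferring the scaling relation with dilation factor $e^{h_{top}(f_c)} = e^{h_{top}(f)}$; (3) — the $cs$-invariance — follows because a center-stable holonomy between two unstable leaves of $f$ projects to a stable holonomy between unstable leaves of $f_c$ (center leaves collapse to points), and $\bar m^u$ is stable-holonomy invariant; local uniform finiteness and continuity follow from compactness of $M_c$ together with the local product structure. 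Measurability (indeed continuity) of $x \mapsto m^u_x(\phi|\mathcal{F}^u_B(x))$ comes from continuity of the quotient Margulis family and continuity of $\pi$.

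\textbf{Stage 3: characterization of maximal measures with $\lambda_c \le 0$.}
For the furthermore clause, let $\mu$ be ergodic with $\lambda_c(\mu) \le 0$. Take an increasing partition $\xi$ subordinate to $\mathcal{F}^u$ (Stage exists since $\mathcal{F}^u$ is uniformly expanding, \cite{yang2016entropy}) and write the disintegration $\{\mu^u_x\}$. By Proposition \ref{ledrapier}, since all center exponents are non-positive, $h_\mu(f) = h^u_\mu(f)$; and by Proposition \ref{symmetricentropy}, $h_{top}(f) = h^u_{top}(f)$. So $\mu$ is an MME iff it is a $u$-MME. Now I would run the standard Ledrappier-type argument: the relation $f^{*}m^u_{f(x)} = e^{h_{top}(f)} m^u_x$ means that the system $\{m^u_x\}$ "carries entropy $h_{top}(f)$" along $\mathcal{F}^u$; computing $h^u_\mu(f) = -\int \log \mu^u_x(f^{-1}\xi(fx))\,d\mu$ and comparing with the conditional-measure version of the same quantity built from $\{m^u_x\}$, one gets $h^u_\mu(f) \le h_{top}(f)$ with equality iff the Radon--Nikodym cocycle of $\mu^u_x$ against $m^u_x$ is $f$-invariant, which by ergodicity forces $\mu^u_x \ll m^u_x$ and in fact proportional on atoms of $\xi$, i.e. $\{\mu^u_x\}$ equivalent to $\{m^u_x\}$. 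Conversely, if $\{\mu^u_x\}$ is equivalent to $\{m^u_x\}$, the scaling relation plugged into the entropy formula yields $h^u_\mu(f) = h_{top}(f)$, so $\mu$ is maximal.

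\textbf{Main obstacle.}
The delicate point is Stage 2 together with the entropy bookkeeping in Stage 3: I must be careful that the quotient Margulis family pulls back to a genuinely $cs$-invariant (not merely $s$-invariant) system, which relies on the fact that on the universal object the center leaves are collapsed so that $cs$-holonomy upstairs is exactly $s$-holonomy downstairs — this needs the global holonomy hypothesis (homeomorphism between center leaves) feeding into dynamical coherence, and one should check it does not require accessibility. The second subtle point is the equality case of the Ledrappier--Young / Margulis entropy comparison: showing that the invariant Radon--Nikodym cocycle is actually constant (hence the conditionals proportional) requires the non-atomicity and full support of $m^u_x$ and an ergodicity argument, and this is where the bulk of the careful measure-theoretic work lies; I expect this identification of the equality case to be the hardest part of the proof.
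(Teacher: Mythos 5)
Your proposal takes a genuinely different route from the paper. The paper never passes to the quotient to get its Margulis measures: it builds a $cu$-Margulis system directly on $M$ (Proposition \ref{existencia mcu} and Proposition \ref{Propjav}) by running the Margulis fixed-point argument on a compact convex set $\mathbb{L}$ of functionals on $cu$-functions, using that $\mathcal{F}^{cu}$ is transverse to the \emph{uniformly contracting} $\mathcal{F}^{s}$ (so $s$-holonomy distortion is controlled and Schauder--Tychonoff applies). It then defines $m^{u}_{x}(A) := m^{cu}_{x}(\widehat{A})$ with $\widehat{A} = \bigcup_{y\in A}\mathcal{F}^{c}(y)$, uses Addendum \ref{adendum para uniformly compact} (center leaves have zero $m^{cu}$-measure, $D_u>1$) and topological transitivity of $f_c$ to get non-atomicity, full support, and $cs$-invariance, and finally identifies $D_u = e^{h_{top}(f)}$ and the equivalence of disintegrations via Proposition \ref{ProposAli} and Proposition \ref{symmetricentropy}. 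Your plan instead pushes the whole construction to the quotient: build the Margulis system for $f_c$ on its unstable lamination, then pull back along the homeomorphism $\pi|_{\mathcal{F}^{u}(x)} : \mathcal{F}^{u}(x)\to\mathcal{F}^{u}_{c}(\pi(x))$. This is conceptually cleaner (center-stable holonomy upstairs becomes stable holonomy downstairs, so $cs$-invariance is literally $s$-invariance in the quotient), and your entropy bookkeeping $h_{top}(f)=h_{top}(f_c)$ via Ledrappier--Walters is a valid alternative to the paper's appeal to Lemma 5.10 of \cite{tahzibi2021unstable}. Your Stage 3 is essentially a re-derivation of Proposition \ref{ProposAli}, which is what the paper cites, so that part matches.

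There is, however, a gap in Stage 1 at the level of generality of Theorem \ref{existencia de u margulis}. The hypotheses are only $f\in\phc^{2}_{c=1}(M)$ and $f_c$ topologically transitive --- no restriction on $\dim E^{u}$, no orientability, no skew-product structure, and no assumption that $M_c$ is a manifold. The remark after \cite{bohnet2011partially} that you invoke to conclude $f_c$ is conjugate to an Anosov automorphism requires $\dim E^{u}=1$ and $E^{u}$ orientable; in the general case $M_c$ may be a non-manifold quotient and $f_c$ only a hyperbolic/expansive homeomorphism, for which the existence of a Margulis system on the unstable lamination is not an off-the-shelf fact and is precisely the thing that has to be built. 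The paper sidesteps this entirely by constructing the $cu$-Margulis functional on $M$ itself, where the transversality to $\mathcal{F}^{s}$ and the \cite{Ali} machinery go through without any hypothesis on the quotient beyond transitivity. So your approach is fine for the skew-product-over-Anosov setting used in Theorem \ref{teo2} (there the base Anosov Margulis system is standard), but as a proof of Theorem \ref{existencia de u margulis} as stated you would still need to construct the quotient Margulis system from scratch --- at which point you are doing essentially the paper's work one level down, and the collapse trick $A\mapsto\widehat{A}$ is doing it more economically.

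Two smaller points: your claim that $\pi|_{\mathcal{F}^u(x)}$ is a bijection ``because $E^u$ is transverse to $E^c$'' is too quick --- transversality alone does not exclude several intersections; you need the global statement $\mathcal{F}^u(x)\cap\mathcal{F}^c(x)=\{x\}$, which in the paper comes from Lemma 3.6 of \cite{correa2011compact} and uses uniform compactness of $\mathcal{F}^c$. And the ``lift to a finite cover if necessary'' in Stage 1 should be accompanied by an argument that the resulting Margulis system descends; the paper handles the analogous orientability reductions explicitly in the proof of Theorem \ref{teo2} but does not need them for Theorem \ref{existencia de u margulis} itself.
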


\begin{remark}
We would like to mention that D. Bohnet \cite{bohnet2011partially}  had  also constructed Margulis family of measures in the context of partially hyperbolic diffeomorphisms with compact center leaves under trivial holonomy condition on center foliation and central transitivity of dynamics. Our method can be used for more general partially hyperbolic diffeomorphisms including perturbations of time-one map of Anosov flows. See also the result of Carrasco-Hertz \cite{CaHe} for Margulis measures.
\end{remark}
First, we build a system of Margulis measures supported on the center-unstable leaves. When the central foliation is uniformly compact, we deduce from this system a system of Margulis measures supported on the unstable leaves $\{m^{u}_x\}_{x\in M}$ which are invariant by center-stable holonomies. If the  quotient dynamics $f_c$ is topologically transitive we prove that $m^{u}_x$ has full support in $\mathcal{F}^{u} (x)$ for all $x\in M$. This will prove  Theorem \ref{existencia de u margulis}.

\subsection{The $cu$-conditionals}
Here we use Margulis' approach to construct a family of measures on invariant foliations transverse to a uniformly contracting (or expanding) foliation. 
\begin{proposition}\label{existencia mcu}
Let $M$ be a compact manifold and $f$ a $C^2$ partially hyperbolic and dynamically coherent. Then there exists a $cu$-system Margulis $\{m^{cu}_x\}_{x\in M}$, invariant by $ s$-holonomy and dilation constant $D_u>0$.
\end{proposition}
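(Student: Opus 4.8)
The plan is to follow Margulis' original construction of conditional measures on leaves of a foliation transverse to an expanding one, adapted to the partially hyperbolic setting. The key point is that $\mathcal{F}^{cu}$ is invariant under $f$ and transverse to the uniformly contracting strong stable foliation $\mathcal{F}^s$, which plays the role of the ``contracting direction'' in Margulis' argument. First I would fix a fine enough foliated cover of $M$ by $cu$-charts (flow boxes for $\mathcal{F}^{cu}$ with transversals along $\mathcal{F}^s$), and consider the cone of continuous positive densities on these local $cu$-plaques. On this cone one defines a transfer-type operator $\mathcal{L}$ built from pulling back by $f$ and comparing with the $s$-holonomy identifications between plaques. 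Because $E^s$ is uniformly contracted, the action of $f$ on $cu$-plaques is uniformly expanding along the $\mathcal{F}^s$-transversal direction in the appropriate sense, so $\mathcal{L}$ (suitably normalized) is a positive operator that contracts a projective (Hilbert) metric on the cone of densities.

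The second step is to extract, via a fixed-point argument, an eigenfunction: a family of local densities $\{m^{cu}_x\}$ and a constant $D_u>1$ with $f_* m^{cu}_x = D_u^{-1} m^{cu}_{f(x)}$. Concretely, one iterates $\mathcal{L}$ starting from any positive density and uses the uniform projective contraction together with compactness of $M$ to obtain convergence, up to scaling, to a fixed density; the scaling rate is $D_u$. The $s$-holonomy invariance is then automatic because the operator $\mathcal{L}$ was defined to be compatible with the $s$-holonomy identifications — equivalently, one checks that the $s$-holonomy invariance is preserved by $\mathcal{L}$, hence passes to the limit. One must also verify that the $m^{cu}_x$ are Radon (finite on compact subsets of leaves), non-atomic, and depend measurably (indeed continuously) on $x$; these follow from the local uniform estimates on the densities coming from the cone contraction and the compactness of $M$. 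Gluing the local pieces into globally defined leafwise Radon measures $m^{cu}_x$ requires the holonomy invariance to guarantee the local densities are mutually consistent on overlaps, which is where transversality of $\mathcal{F}^s$ to $\mathcal{F}^{cu}$ and invariance of both foliations are used.

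The main obstacle I anticipate is establishing the uniform projective contraction of the cone operator $\mathcal{L}$ in the absence of smoothness or uniform expansion \emph{inside} $\mathcal{F}^{cu}$ — the center direction is only neutral, not expanding, so one cannot argue exactly as for unstable foliations. The right framework is to exploit that $f$ expands the \emph{normal} direction to $\mathcal{F}^{cu}$ (namely $E^s$ contracted under $f$, hence expanded under $f^{-1}$), so the construction is really Margulis' construction of the ``expanding-transverse'' conditionals, and the contraction of the Hilbert metric comes from the bounded distortion of $f$ restricted to $cu$-plaques ($C^2$ regularity enters here) together with the uniform domination $E^s \oplus E^{cu}$. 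I would isolate a distortion lemma: there is $C>1$ with $C^{-1} \le \frac{|\det Df|E^{cu}_x|}{|\det Df|E^{cu}_y|} \le C$ whenever $x,y$ lie in the same small $cu$-plaque, uniformly, and similarly for iterates with geometric gain; this is the engine of the cone contraction. Once the eigen-density and dilation constant $D_u$ are in hand, items claimed in the proposition — existence of the $cu$-Margulis system, $s$-holonomy invariance, and $D_u > 0$ (in fact $D_u>1$) — all follow, and the identification $D_u = e^{h_{top}(f)}$ as well as non-atomicity and full support will be addressed subsequently when passing from the $cu$-system to the $u$-system via uniform compactness of $\mathcal{F}^c$.
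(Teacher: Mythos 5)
Your proposal takes a genuinely different route from the paper's, and the route you choose contains a gap at the step you yourself flag as the ``main obstacle.'' The paper builds a compact convex set $\mathbb{L}$ of functionals on the space of $cu$-functions (generated by convex combinations of the iterated Riemannian volumes $\ell_n = f^n(\lambda^{cu})$, normalized against a fixed finite family of test functions $\Phi_i$), and then applies the Schauder--Tychonoff fixed-point theorem to the normalized pushforward $\bar f$. Schauder--Tychonoff needs only compactness and continuity, never contraction. The compactness estimates (Lemma~\ref{lemma1}, Corollary~\ref{alic45}) come from the bounded distortion of \emph{$s$-holonomy} between transversals to $\mathcal{F}^s$ (Theorem~\ref{viana}, i.e.\ the Margulis--Pesin--Sinai type absolute continuity result), \emph{not} from distortion control of $Df^n\vert_{E^{cu}}$ within a $cu$-plaque.

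The specific gap: you propose to produce the eigen-density via uniform projective contraction of a Hilbert metric on a cone of $cu$-plaque densities, with the ``engine'' being a distortion lemma for $|\det Df^n\vert_{E^{cu}}|$ between two points of the same plaque ``with geometric gain.'' This engine does not run. Two points in the same $cu$-plaque are, in general, separated by a central displacement, and under forward iteration that displacement need not contract (it is only neutral); the orbits can even separate along $E^u$. Hence the $n$-step distortion of $Df^n\vert_{E^{cu}}$ along a $cu$-plaque is not uniformly bounded, let alone improving geometrically, and the cone operator $\mathcal{L}$ does not become strictly contracting in Hilbert metric. You cannot fix this by invoking expansion of the normal $E^s$ direction: that controls comparison between \emph{different} $cu$-leaves via $s$-holonomy (exactly what the paper uses), but provides no coupling in the neutral center direction inside a leaf. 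A concrete counterexample to the cone contraction is the direct product of an Anosov map and a rotation, where densities differing only in the fiber variable are not brought together projectively by any iterate of $\mathcal{L}$. Note also that strict cone contraction would yield uniqueness of the Margulis functional, which the proposition does not claim and the Schauder--Tychonoff argument does not give; the fact that you would be proving a strictly stronger statement is a warning sign. The repair is to abandon contraction in favor of the paper's soft fixed-point argument: establish that the orbit $\{\ell_n\}$ is trapped in a compact convex normalized set by exploiting $s$-holonomy equivalence of $cu$-transversals, and then let Schauder--Tychonoff produce the fixed functional; the $s$-invariance, non-atomicity, and $D_u>1$ are extracted afterward exactly as you outline at the end.
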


\begin{addendum}\label{adendum para uniformly compact}
In the theorem above if $\mathcal{F}^{c}$ is a uniformly compact foliation. Then
 \begin{itemize}
     \item[(a)] $m^{cu}_x(\mathcal{F}^{c}(x))=0$ for every $x\in M$,
     \item[(b)] $D_u>1$.
 \end{itemize}
\end{addendum}

\noindent\textbf{Functional on the set of $cu$ -functions:}
We denote by $\lambda^{\sigma}$ the intrinsic Riemannian volume on $\mathcal{F}^{\sigma}$ for $\sigma=u,s,cu,cs$. We denote by $d_{\sigma}$ the distance defined on each leaf of $\mathcal{F}^{\sigma}$ by the induced Riemannian structure and defining the intrinsic topology. The following definitions were given in \cite{Ali} for the construction of $cu$-Margulis systems using minimality of stable foliation.\\

The $\sigma$-balls are  $\mathcal{F}^{\sigma}(x,r):=\{y\in \mathcal{F}^{\sigma}(x):d_{\sigma}(y,x)<r\}$. For a subset $A$ of
such a leaf, we set  $B^{\sigma}(A,r):=\cup_{x\in A}\mathcal{F}^{\sigma}(x,r)$. A $\sigma$-subset is a $d_{\sigma}$-bounded subset of a $\sigma$-leaf. A $\sigma$-function   is a non negative function $\psi:M\rightarrow \mathbb{R}$ such that $\{x\in M:\psi(x)\neq 0\}$ is a $\sigma$-subset and the restriction of $\psi$ to $$\supp(\psi):=\overline{\{x\in M: \psi(x)\neq 0\}}$$
is continuous. We write $\psi>0$ if  $\psi\geq 0$ e $\{\psi>0\}$ has non-empty interior in the intrinsic topology.  We denote by $\mathcal{T}^{\sigma}$ the collection of all  $\sigma$-functions.\\
Given a $\sigma$-holonomy $h:A\rightarrow B$, its size is $\sup_{x\in A}d_{\sigma}(x,h(x))$,  and the two subsets $A$ e $B$ are called equivalent along  $\mathcal{F}^{\sigma}$ through $h$ if $h(A)=B$. We say that they are $\epsilon$-equivalent if the holonomy has size at most  $\epsilon$. Two functions $\psi_1$ e $\psi_2$ are $\epsilon$-equivalent along $\mathcal{F}^{\sigma}$  
if their supports are equivalent through a $\sigma$-holonomy $h$ with
size at most  $\epsilon$ and satisfying $\psi_2=\psi_1\circ h$. 
\vspace{0.7cm}

Following Margulis, we consider functionals $\lambda:\mathcal{T}^{cu}\rightarrow \mathbb{R}$. Note that $\lambda^{cu}$ is one such functional. The map $f$ acts on them by:
$$\forall \psi\in \mathcal{T}^{cu}\;\; f(\lambda)(\psi):=\lambda(\psi\circ f^{-1}),$$
A key class of such functionals are $\ell_{n}:=f^{n}(\lambda^{cu})$ for any $n\in \mathbb{N}$.That is, for any $\psi\in \mathcal{T}^{cu}$,
$$\ell_{n}(\psi):=\int \psi \circ f^{-n}d\lambda^{cu}.$$
We will use the following covering numbers. For $A$, a $cu$-subset, and $\epsilon>0$, we denote by  $r^{cu}(A,\epsilon)$ the smallest integer $k\geq 0$ such that there are $x_{1},\ldots,x_{k}\in A$ and $A\subset \cup_{i=1}^{k}\mathcal{F}^{cu}(x_{i},\epsilon)$.\\

The initial goal is to build a compact set of functionals on $\mathcal{T}^{cu}$. In the construction of $cu$- Margulis measures in \cite{Ali}, the authors used   the minimality of the stable foliation to define a normalizing $cu$-function which allows defining a compact set of functionals $\mathbb{L}$ on $\mathcal{T}^{cu}$. As we are not assuming  minimality of stable foliation, we need to strategically define normalizing $cu$-functions. For that we use bi-foliated boxes for $\mathcal{F}^{cu}$ and $\mathcal{F}^{s}$, more precisely: We fix  $B_{1},B_{2},\ldots,B_{l}$ bi-foliated box for  $\mathcal{F}^{s}$ and $\mathcal{F}^{cu}$  such that  there is functions  $g_{i}^{\sigma}:B_{i}\rightarrow B^{s}(0,\epsilon_1)\times B^{m-s}(0,\epsilon_1)$ and $\{(g_{i}^{\sigma},B_{i}):i=1,2,\ldots,l\}$ is a foliated atlas for  $\mathcal{F}^{\sigma}$, $\sigma=s,cu$. Note that   $$\bigcup_{i=1}^{l}B_i=M.$$

 To normalize, we fix  $\Phi_{1},\Phi_{2},\ldots,\Phi_{l}\in \mathcal{T}^{cu}$, as follows: for each $i=1,\ldots, l$ we take  $p_i\in B_i$,  letting $\Sigma_i=\mathcal{F}^{cu}_{B_i}(p_i)$ and $\Sigma=\Sigma_1\cup \ldots \cup \Sigma_l$. Note that  $\Sigma$ is complete  transversal to stable foliation. Now, we take non negative  functions $\Phi_{i}:\mathcal{F}^{cu}(p_i)\longrightarrow \mathbb{R}$ such that  $\phi_i|_{\Sigma_i}=1$ for $i=1,\ldots,l$. Considering the topology of pointwise convergence (i.e., working in $\mathbb{R}^{\mathcal{T}^{cu}}$ with the product topology), let $\mathbb{L}$ be the closure of the following set:
\begin{eqnarray*}\label{l1}
\mathbb{L}_{1}:=\Big\{\lambda=\sum_{i=1}^{n}c_{i}\ell_{t_i}:n\in \mathbb{N}^{*}, t_{1},\ldots , t_{n}\in \mathbb{N}, c_{1},\ldots, c_{n}>0 \textit{\; and \;}\sum_{i=1}^{l}\lambda(\Phi_{i})=1\Big\}
\end{eqnarray*}
 \begin{proposition}\label{Propjav}
 Let $f\in \diff^{2}(M)$ be partially hyperbolic with a splitting $TM=E^{s}\oplus E^{cu}$ and an invariant foliation $\mathcal{F}^{cu}$. Then there exist $\Lambda \in \mathbb{L}$ and $D_u>0$  such that $$f(\Lambda)=D_u.\Lambda$$ and, for some positive numbers $C_{_\Sigma}, r_{_\Sigma},$ for any $\psi\in \mathcal{T}^{cu}$:
 \begin{itemize}
     \item[(a)] $\Lambda(\psi)\leq C_{_\Sigma} r^{cu}(supp\psi,r_{_\Sigma})\|\psi\|_{\infty}$.
     \item[(b)] if $\phi\in \mathcal{T}^{cu}$ is $s$-equivalent  to $\psi$, then $\Lambda(\phi)=\Lambda_u(\psi)$. 
 \end{itemize}
 \end{proposition}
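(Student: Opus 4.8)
The plan is to obtain $\Lambda$ as a fixed ray of the linear action of $f$ on functionals via a Schauder--Tychonoff argument, after first establishing (a) in the sharper form of a uniform comparison with the normalizing functions $\Phi_{i}$. The transversal $\Sigma=\Sigma_{1}\cup\cdots\cup\Sigma_{l}$ plays here the role that minimality of $\mathcal{F}^{s}$ played in \cite{Ali}: since $\Sigma$ is a complete transversal to $\mathcal{F}^{s}$ and $M$ is compact, every point lies within uniformly bounded $d_{s}$-distance of $\Sigma$, so any small $cu$-subset can be pushed onto $\Sigma$ by a stable holonomy of uniformly bounded size.

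\textbf{Step 1: the key comparison.} I would first prove that there are constants $C_{\Sigma},r_{\Sigma}>0$ with
\[
\ell_{t}(\psi)\ \le\ C_{\Sigma}\,r^{cu}(\supp\psi,r_{\Sigma})\,\|\psi\|_{\infty}\,\sigma_{t}\qquad\text{for all }\psi\in\mathcal{T}^{cu},\ t\ge 0,
\]
where $\sigma_{t}:=\sum_{i=1}^{l}\ell_{t}(\Phi_{i})$. By a partition of unity subordinate to a cover by $cu$-balls and the doubling property of $cu$-leaves one reduces to $\psi$ supported in a single ball of radius $r_{\Sigma}$; sliding that ball onto some $\Sigma_{i_{0}}$ along $\mathcal{F}^{s}$ by a bounded-size holonomy $h$ (with $r_{\Sigma}$ small and the boxes $B_{i}$ shrunk so that the image lands where $\Phi_{i_{0}}\equiv 1$) gives $\psi\circ h^{-1}\le\|\psi\|_{\infty}\Phi_{i_{0}}$, hence $\ell_{t}(\psi\circ h^{-1})\le\|\psi\|_{\infty}\sigma_{t}$. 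The crux is that $\ell_{t}(\psi)\le\kappa\,\ell_{t}(\psi\circ h^{-1})$ with $\kappa$ independent of $t$: from $h^{-1}\circ f^{-t}=f^{-t}\circ h_{t}$ with $h_{t}:=f^{t}\circ h^{-1}\circ f^{-t}$, the map $h_{t}$ is again a stable holonomy and its size is at most $C\theta^{t}$ times that of $h$ ($\theta<1$ the stable rate), so in the change of variables $w=h_{t}(z)$ the Jacobian stays in a fixed interval $[\kappa^{-1},\kappa]$ for all $t$ (here one uses absolute continuity of stable holonomies with Jacobian uniformly close to $1$ for small size). The same reasoning applied to the $\Phi_{i}$ and to the complete transversal $f(\Sigma)$ yields, for each $i$, a bound $\ell_{t}(\Phi_{i})\le C_{i}'\sum_{k}\ell_{t}(\Phi_{k}\circ f^{-1})$ for all $t$, and in particular $\sigma_{t+1}/\sigma_{t}$ bounded above and below.

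\textbf{Steps 2 and 3: compactness, fixed point, and $s$-invariance.} From Step 1, $\mathbb{L}_{1}\neq\emptyset$ and every $\lambda=\sum_{j}c_{j}\ell_{t_{j}}\in\mathbb{L}_{1}$ satisfies $\lambda(\psi)\le C_{\Sigma}\,r^{cu}(\supp\psi,r_{\Sigma})\,\|\psi\|_{\infty}$ because $\sum_{j}c_{j}\sigma_{t_{j}}=1$; hence $\mathbb{L}=\overline{\mathbb{L}_{1}}$ is a nonempty compact convex subset of $\prod_{\psi}[0,C_{\Sigma}r^{cu}(\supp\psi,r_{\Sigma})\|\psi\|_{\infty}]$ and (a) holds for all of $\mathbb{L}$. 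The pushforward $\lambda\mapsto f(\lambda)$ is linear and continuous for the product topology; by the last estimate of Step 1 the map $D(\lambda):=\sum_{i}\lambda(\Phi_{i}\circ f^{-1})$ is continuous and bounded below by a positive constant on $\mathbb{L}$, so $g(\lambda):=f(\lambda)/D(\lambda)$ is a continuous self-map of $\mathbb{L}$ preserving $\mathbb{L}_{1}$. To get (b) as well, I would restrict $g$ to $\mathbb{L}^{s}:=\{\lambda\in\mathbb{L}:\lambda(\phi)=\lambda(\psi)\text{ whenever }\phi,\psi\text{ are }s\text{-equivalent}\}$, which is closed, convex, $g$-invariant (if $\phi=\psi\circ h^{-1}$ then $\phi\circ f^{-1}=(\psi\circ f^{-1})\circ(f\circ h^{-1}\circ f^{-1})$, again an $s$-equivalence), and nonempty (the identity $\ell_{t}(\phi)=\int\psi\circ f^{-t}\cdot\operatorname{Jac}(h_{t}^{-1})\,d\lambda^{cu}$ with $h_{t}$ of size $\le C\theta^{t}\cdot(\text{size of }h)$ gives $|\ell_{t}(\phi)-\ell_{t}(\psi)|\le\varepsilon_{t}\ell_{t}(\psi)$ with $\varepsilon_{t}\to 0$, so any subsequential limit of $\ell_{t}/\sigma_{t}$ lies in $\mathbb{L}^{s}$). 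Schauder--Tychonoff then gives $\Lambda\in\mathbb{L}^{s}$ with $f(\Lambda)=D_{u}\Lambda$, $D_{u}:=D(\Lambda)>0$, which is exactly the assertion.

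\textbf{Main obstacle.} The delicate point is entirely in Step 1: arranging simultaneously $r_{\Sigma}$ and the shrunk boxes so that every small $cu$-ball slides into a region where some $\Phi_{i_{0}}$ equals $1$, and --- more essentially --- obtaining the control of the holonomy Jacobians \emph{uniformly in $t$}. The mechanism for the latter is that conjugating a bounded stable holonomy by $f^{t}$ shrinks its size; this is also precisely what forces the limiting $\Lambda$ to be honestly $s$-invariant rather than merely approximately so. Everything else (Tychonoff compactness, Schauder--Tychonoff, linearity of the pushforward) is soft.
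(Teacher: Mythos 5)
Your proposal is correct and its Step~1 matches the paper's argument (Lemma~\ref{lemma1} and Corollary~\ref{alic45}: cover $\supp\psi$ by $r_{\Sigma}$-balls, slide each onto some $\Sigma_{i_0}$ by a bounded-size stable holonomy whose Jacobian is controlled uniformly in $t$ because conjugation by $f^{t}$ shrinks the holonomy size). Where you genuinely diverge from the paper is in how the $s$-invariance of $\Lambda$ is obtained. The paper applies Schauder--Tychonoff to all of $\mathbb{L}$, extracts an eigen-functional $\Lambda$ with $f(\Lambda)=D_u\Lambda$, and then proves $s$-invariance afterwards (its Step~3, deferred to the argument in \cite{Ali}): for $\phi=\psi\circ h$ with $h$ of size $\delta$, the dilation relation gives $\Lambda(\phi)-\Lambda(\psi)=D_u^{-n}\bigl(\Lambda(\phi\circ f^{-n})-\Lambda(\psi\circ f^{-n})\bigr)$, and since $\phi\circ f^{-n},\psi\circ f^{-n}$ are $(s,C\theta^n\delta)$-equivalent, Lemma~\ref{lemabasse} kills the difference as $n\to\infty$. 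You instead build the $s$-invariance into the domain: you pass to the closed convex set $\mathbb{L}^{s}\subset\mathbb{L}$ of exactly $s$-invariant functionals, check it is nonempty (via subsequential limits of $\ell_t/\sigma_t$, using that the error $\kappa(C\theta^t\delta)\to 0$) and $\bar f$-invariant (because $f$ preserves $\mathcal{F}^{s}$, so $\phi\circ f^{-1}$ and $\psi\circ f^{-1}$ remain $s$-equivalent), and apply Schauder--Tychonoff on $\mathbb{L}^{s}$. Both organizations rest on the same exponential-shrinking mechanism; yours makes (b) immediate at the cost of one extra nonemptiness check, while the paper's keeps the fixed-point step minimal and handles (b) as a corollary of the eigen-equation. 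A second, smaller deviation: for the lower bound on $D(\lambda)=\sum_i\lambda(\Phi_i\circ f^{-1})$, the paper uses the pointwise Jacobian bound $\ell_k(\psi\circ f^{-1})\ge\bigl(\min_x|\mathrm{Jac}(f|\mathcal{F}^{cu})|\bigr)\,\ell_k(\psi)$ (Proposition~\ref{Cpositivo}), whereas you compare against $f(\Sigma)$; the paper's route is more economical, but yours is also sound.
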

 To prove this, we will follow the steps of Theorem 4.2 in \cite{Ali} adapted to our setting. More precisely, we will show that $\mathbb{L}$ is a convex and compact set and then apply the Schauder-Tychonoff fixed point Theorem to a normalized action of $f$ on $\mathbb{L}$.\\
 
We will relate the iterations of different $cu$-functions by using the invariance
under $s$-holonomy and especially the following result (See proof of Theorem C in \cite{viana}).

\begin{theorem}\label{viana}
Let  $f$ be a $C^{2}$ diffeomorphism on a compact Riemannian manifold $M$.  Assume that there is a dominated splitting  $E^{s}\oplus E^{cu}$ with $E^{s}$ uniformly
contracting. Let $A_1,A_2$ submanifolds transverse to $\mathcal{F}^{s}$ and $(s,\delta)$-equivalent through $h:A_1\rightarrow A_2$. Let $\lambda_{1},\lambda_{2}$ the Riemannian volume $A_{1},A_{2}$, the measures $h_{*}\lambda_{1}$ and $\lambda_{2}$ are equivalent and there are   $\kappa(\delta)<\infty$  depending only on $f$ e $E^{s}$  such that:
$$|\dfrac{dh_{*}\lambda_{1}}{d\lambda_{2}}-1|\leq \kappa(\delta),\;\;\kappa(\delta)\rightarrow 0 \;\;\text{when}\; \delta \rightarrow 0.$$
\end{theorem}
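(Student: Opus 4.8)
The plan is to show that the Jacobian of the stable holonomy $h$ is an absolutely convergent infinite product of ratios of central-unstable Jacobians along the orbit, and then to read the bound $\kappa(\delta)\to 0$ directly off the exponential contraction of $E^{s}$ together with the Hölder regularity of $E^{cu}$. I will carry out the argument in the case used below, namely when $A_{1}$ and $A_{2}$ are contained in leaves of $\mathcal{F}^{cu}$ (so that $T_{y}A_{i}=E^{cu}_{y}$ for every $y$), which is the situation of the supports of $cu$-functions. Throughout, set $J(y):=\log\bigl|\det(Df_{y}|_{E^{cu}_{y}})\bigr|$, a bounded function on $M$. Since $f$ is $C^{2}$, so that $Df$ is Lipschitz, and since $E^{cu}$ --- being the bundle of a dominated splitting --- is Hölder continuous, the function $J$ is $\alpha$-Hölder for some $\alpha\in(0,1)$ that depends only on $f$ and on the contraction rates of $E^{s}$.

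First I would conjugate $h$ by $f^{n}$. Since $\mathcal{F}^{s}$ is $f$-invariant, the map $h_{(n)}:=f^{n}\circ h\circ f^{-n}$ is exactly the stable holonomy from $f^{n}(A_{1})$ to $f^{n}(A_{2})$, and it has size at most $C\lambda^{n}\delta$, where $\lambda\in(0,1)$ and $C>0$ govern the contraction of $E^{s}$. Because $f$ preserves $\mathcal{F}^{cu}$, the chain rule gives $\log\mathrm{Jac}(f^{n}|_{A_{i}})(y)=\sum_{k=0}^{n-1}J(f^{k}y)$; applying it to $h=f^{-n}\circ h_{(n)}\circ f^{n}$ yields, for every $x\in A_{1}$,
\[
\log\mathrm{Jac}(h)(x)=\sum_{k=0}^{n-1}\bigl(J(f^{k}x)-J(f^{k}hx)\bigr)+\log\mathrm{Jac}\bigl(h_{(n)}\bigr)(f^{n}x).
\]

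Next I would control the two terms. As $hx\in\mathcal{F}^{s}(x)$ with $d_{s}(x,hx)\le\delta$, one has $d(f^{k}x,f^{k}hx)\le C\lambda^{k}\delta$, hence $\bigl|J(f^{k}x)-J(f^{k}hx)\bigr|\le C'(\lambda^{k}\delta)^{\alpha}$; thus $\sum_{k\ge 0}\bigl(J(f^{k}x)-J(f^{k}hx)\bigr)$ converges absolutely with absolute value at most $C'\delta^{\alpha}/(1-\lambda^{\alpha})$. For the correction term, $h_{(n)}$ is a stable holonomy of size at most $C\lambda^{n}\delta$ between two plaques of $\mathcal{F}^{cu}$ of uniformly bounded $C^{1}$-geometry (their tangent spaces equal $E^{cu}$, which is continuous on the compact manifold); a compactness argument over the precompact family of $\mathcal{F}^{cu}$-plaques of bounded size, together with continuity of $\mathcal{F}^{s}$, produces a modulus $\rho$, depending only on $f$, with $\rho(\varepsilon)\to 0$ as $\varepsilon\to 0$, such that the Jacobian of any stable holonomy of size at most $\varepsilon$ between two such plaques lies in $[1-\rho(\varepsilon),1+\rho(\varepsilon)]$. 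Consequently $\bigl|\log\mathrm{Jac}(h_{(n)})(f^{n}x)\bigr|\le\rho(C\lambda^{n}\delta)\to 0$ as $n\to\infty$. Letting $n\to\infty$ in the displayed identity, the correction term disappears and
\[
\log\mathrm{Jac}(h)(x)=\sum_{k\ge 0}\bigl(J(f^{k}x)-J(f^{k}hx)\bigr),\qquad\bigl|\log\mathrm{Jac}(h)(x)\bigr|\le\frac{C'}{1-\lambda^{\alpha}}\,\delta^{\alpha}.
\]
It then follows that $h_{*}\lambda_{1}$ and $\lambda_{2}$ are equivalent, that $\frac{dh_{*}\lambda_{1}}{d\lambda_{2}}(hx)=\mathrm{Jac}(h)(x)^{-1}$, and hence that $\bigl|\frac{dh_{*}\lambda_{1}}{d\lambda_{2}}-1\bigr|\le\kappa(\delta)$ with $\kappa(\delta):=\exp\bigl(C'\delta^{\alpha}/(1-\lambda^{\alpha})\bigr)-1\to 0$ as $\delta\to 0$; since $C'$, $\lambda$ and $\alpha$ depend only on $f$ and on $E^{s}$, this is the asserted dependence.

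I expect the genuinely delicate point to be the vanishing of the correction term $\log\mathrm{Jac}(h_{(n)})(f^{n}x)$: one must show that the Jacobian of a stable holonomy between $cu$-plaques of controlled geometry is uniformly close to $1$ once the holonomy is small, and this cannot be obtained by naive differentiation since $\mathcal{F}^{s}$ is, in general, only Hölder. The clean route is the compactness and continuity argument indicated above; alternatively, one can dispense with an independent treatment of this term by observing that $h_{(n)}$ is itself a small holonomy between $cu$-plaques, so that the telescoping identity may be re-applied to it and the correction term driven to $0$ exactly as in the proof of Theorem C in \cite{viana}.
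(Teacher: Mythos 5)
Note first that the paper does not supply its own proof of this theorem; it simply cites ``proof of Theorem C in \cite{viana}.'' Your write-up is therefore an attempt to reconstruct that argument, and its skeleton---conjugating $h$ by $f^{n}$, telescoping $\log\mathrm{Jac}$ along the orbit, and summing the resulting series via H\"older continuity of $E^{cu}$ and the contraction rate of $E^{s}$---is indeed the classical approach.

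There is, however, a genuine gap in the treatment of the correction term, and a more basic issue hidden in the very first displayed identity. Writing $\log\mathrm{Jac}(h)(x)=\sum_{k=0}^{n-1}\bigl(J(f^{k}x)-J(f^{k}hx)\bigr)+\log\mathrm{Jac}(h_{(n)})(f^{n}x)$ presupposes that the holonomy $h$ (and each $h_{(n)}$) already has a pointwise Jacobian. A priori the stable holonomy is only a homeomorphism; establishing that it is absolutely continuous with a well-defined density is precisely what the theorem asserts, so one cannot start from that identity. Worse, the ``compactness argument'' you invoke to drive $\log\mathrm{Jac}(h_{(n)})(f^{n}x)\to 0$ is circular: the fact you are asserting---that the Jacobian of \emph{any} stable holonomy of size $\le\varepsilon$ between $cu$-plaques lies in $[1-\rho(\varepsilon),1+\rho(\varepsilon)]$ with $\rho(\varepsilon)\to 0$---is exactly the conclusion of the theorem (with $\kappa=\rho$), applied to a smaller holonomy; if you could establish it by soft compactness you would not need the telescoping at all. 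Precompactness of the plaque family in the $C^{1}$ topology does not help, because continuity of $\mathcal{F}^{s}$ gives only $C^{0}$-convergence of $h_{(n)}$ to the identity, which says nothing about convergence of Jacobians. Your alternative suggestion---to re-apply the telescoping identity to $h_{(n)}$---suffers from the same circularity, merely deferring the unbounded recursion.

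The non-circular route, which is what Viana's Theorem C actually does, avoids pointwise Jacobians of $h$ entirely and argues at the level of measures. One covers $A_{1}$ by small disks $B$, pushes both $B$ and $h(B)$ forward by $f^{n}$, and compares the Lebesgue volumes directly: the ratio $\lambda_{2}\!\left(h(B)\right)/\lambda_{1}(B)$ is controlled, up to an error vanishing as $\mathrm{diam}(B)\to 0$ and $n\to\infty$, by the product $\prod_{k=0}^{n-1} J(f^{k}hx)/J(f^{k}x)$, whose logarithm is exactly your convergent series with bound $C'\delta^{\alpha}/(1-\lambda^{\alpha})$. Since $f^{n}(B)$ and $f^{n}(h(B))$ become exponentially $C^{0}$-close and lie in plaques of uniformly bounded geometry, the volume comparison between them uses only continuity of $\mathcal{F}^{s}$ and $E^{cu}$ and no differentiability of the holonomy. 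This yields both the absolute continuity of $h_{*}\lambda_{1}$ with respect to $\lambda_{2}$ and the uniform density bound $\bigl|dh_{*}\lambda_{1}/d\lambda_{2}-1\bigr|\le\kappa(\delta)$ in one stroke, and the density formula $\frac{dh_{*}\lambda_{1}}{d\lambda_{2}}(hx)=\prod_{k\ge 0}J(f^{k}x)/J(f^{k}hx)$ is then recovered by Lebesgue differentiation. Your series estimate is the right quantitative input; the missing idea is to phrase the comparison in terms of volumes of sets rather than derivatives of a map whose differentiability is not yet available.
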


Note that $\Sigma:=\Sigma_1\cup \ldots \cup \Sigma_l$ is a complete transversal section for the stable foliation. A result which is immediate consequence of Proposition 3.3 is the following:
\begin{lemma}\label{lemma1}
There are $C_{_\Sigma}<\infty$ and $r_{_\Sigma}>0$ such that 
 $$\forall x\in M,\; \forall n\geq 0,\; \lambda^{cu}(f^{n}B^{cu}(x,r_{_\Sigma}))\leq C_{_\Sigma} \sum_{i=1}^{l}\lambda^{cu}(f^{n}(\Sigma_{i})).$$
\end{lemma}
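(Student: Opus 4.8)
The statement asserts a uniform comparison between the $\lambda^{cu}$-volume of the forward iterate of a fixed-size $cu$-ball and the sum of the $\lambda^{cu}$-volumes of the forward iterates of the normalizing plaques $\Sigma_i$. The plan is to reduce everything, via the distortion estimate of Theorem \ref{viana}, to a comparison that takes place at time zero, and then to cover $f^n B^{cu}(x,r_\Sigma)$ by stable-holonomic copies of pieces of the $f^n(\Sigma_i)$.

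First I would fix $r_\Sigma>0$ small enough that every ball $B^{cu}(x,r_\Sigma)$ is contained in one of the foliated boxes $B_i$, and small enough that within each box the stable holonomy onto the plaque $\Sigma_i=\mathcal{F}^{cu}_{B_i}(p_i)$ has size bounded by some $\delta_0$ for which $\kappa(\delta_0)<1$ (using Theorem \ref{viana}); shrinking $r_\Sigma$ further and using compactness of $M$, I get a finite number $K$, independent of $x$, such that for every $x$ the ball $B^{cu}(x,r_\Sigma)$ is covered by at most $K$ stable-holonomic translates of $cu$-subsets of the plaques $\Sigma_i$, each translate having holonomy size $\le\delta_0$. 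This is where Proposition \ref{Propjav}, or rather the compactness/covering ingredients behind it, and Proposition \ref{existencia mcu}'s setup come in: uniform local product structure plus compactness give a uniform covering number. Concretely, there are $s$-holonomies $h_j$ ($j=1,\dots,K$) of size $\le\delta_0$ from measurable subsets $Y_j\subset\Sigma_{i(j)}$ onto subsets of $B^{cu}(x,r_\Sigma)$ whose union covers $B^{cu}(x,r_\Sigma)$.

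Next, I would push this covering forward by $f^n$. Since $\mathcal{F}^s$ and $\mathcal{F}^{cu}$ are $f$-invariant, $f^n$ conjugates the stable holonomy $h_j$ to a stable holonomy $f^n\circ h_j\circ f^{-n}$ between $f^n(Y_j)\subset f^n(\Sigma_{i(j)})$ and a subset of $f^n B^{cu}(x,r_\Sigma)$; moreover, because $E^s$ is uniformly contracted, the size of this iterated holonomy is still $\le\delta_0$ (in fact it decreases), so Theorem \ref{viana} applies uniformly in $n$: the pushforward of $\lambda^{cu}\!\restriction f^n(Y_j)$ under this holonomy has Radon–Nikodym derivative against $\lambda^{cu}$ bounded below by $1-\kappa(\delta_0)>0$ and above by $1+\kappa(\delta_0)$. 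Summing over the at most $K$ pieces of the cover and discarding overlaps,
\[
\lambda^{cu}\!\left(f^n B^{cu}(x,r_\Sigma)\right)\;\le\;\sum_{j=1}^{K}\lambda^{cu}\!\left(\text{image of }f^n(Y_j)\right)\;\le\;(1+\kappa(\delta_0))\sum_{j=1}^{K}\lambda^{cu}(f^n(Y_j))\;\le\;(1+\kappa(\delta_0))\sum_{j=1}^{K}\lambda^{cu}(f^n(\Sigma_{i(j)})).
\]
Setting $C_\Sigma:=K(1+\kappa(\delta_0))$ and bounding each $\lambda^{cu}(f^n(\Sigma_{i(j)}))$ by $\sum_{i=1}^l\lambda^{cu}(f^n(\Sigma_i))$ yields the claim, with $C_\Sigma$ and $r_\Sigma$ depending only on $f$, $E^s$, and the chosen atlas $\{(g_i^\sigma,B_i)\}$, not on $x$ or $n$.

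The main obstacle I anticipate is the uniformity of the covering number $K$ in $x$: one must be careful that a fixed-radius $cu$-ball, wherever it sits, is covered by a bounded number of stable-holonomic images of pieces of the $\Sigma_i$, with \emph{uniformly small} holonomy size so that a \emph{single} $\kappa(\delta_0)<1$ works everywhere. This is exactly the kind of uniform local-product-structure statement that is packaged into Proposition \ref{Propjav}(a) (the covering-number bound $r^{cu}$), so in the write-up I would invoke that proposition rather than re-prove the covering estimate; the remaining work is then just the invariance of stable holonomies under $f^n$ together with the $n$-uniform distortion bound from Theorem \ref{viana}, both of which are soft.
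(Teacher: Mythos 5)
Your core mechanism matches the paper's: use compactness to produce a uniform $r_{\Sigma}$, transport along stable holonomy into the plaques $\Sigma_i$, iterate forward noting that the stable contraction only shrinks the holonomy size, and close with the bounded-distortion estimate of Theorem \ref{viana}. The paper, however, does not need a covering by $K$ small-holonomy pieces: having fixed $r_{\Sigma}$ small and used compactness, it arranges that each ball $\mathcal{F}^{cu}(x,r_{\Sigma})$ is $(R,s)$-equivalent to a subset of a \emph{single} $\Sigma_j$ (with $R$ uniform but not necessarily small), and then $C_{\Sigma}$ simply absorbs the factor $1+\kappa(R)$. Your decomposition into $K$ pieces with holonomy size $\le\delta_0$ also works, but the care you take to ensure $\kappa(\delta_0)<1$ is superfluous here: the lemma is a one-sided upper bound, so only $1+\kappa(\delta)$ enters and there is no need for the Jacobian to be bounded away from $0$.

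One genuine issue to fix in the write-up: you propose to ``invoke Proposition \ref{Propjav}(a) rather than re-prove the covering estimate,'' but this would be circular. Lemma \ref{lemma1} feeds into Corollary \ref{alic45}, which is exactly what is used to prove Proposition \ref{Propjav}(a) (Step~1 of that proof). You cannot appeal to the conclusion of Proposition \ref{Propjav} while proving one of its ingredients. Also note that $r^{cu}(\cdot,\epsilon)$ in Proposition \ref{Propjav}(a) is a purely metric covering number of the support of a $cu$-function by $cu$-balls; it is not the stable-holonomy covering statement you need. The covering (or, more simply, the single-holonomy) claim must be established directly from the bi-foliated atlas $\{B_i\}$, transversality of $\mathcal{F}^s$ to the $\Sigma_i$, and compactness of $M$, as the paper does.
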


\begin{proof}
Let $x\in M$, there exists  $j\in \{1,2,\ldots,l\}$  such that $x$ is $s$-equivalent to some point in  $\Sigma_{j}$ for some $j$.
Fix $r_{1}>0$ small such that  $A_{j}:=\Sigma_{j}\backslash \mathcal{F}^{cu}(\partial \Sigma_{j},r_{1})$ is nonempty  and  $x$ is  $s$-equivalent to some point in $A_j$.   The continuity of the
foliation  $\mathcal{F}^{s}$ and its transversality to $\Sigma_j$  yield $r_{x}>0$ and $R_{x}<\infty$ such that,  for any  $y\in B(x,r_{x})$, $\mathcal{F}^{cu}(y,r_{x})$ is $(R_{x},s)$-equivalent   to a subset of  $\Sigma_j$. The compactness of $M$  yields $r_{_\Sigma}>0$, $R<\infty$ such that,  for any point $x\in M$, $\mathcal{F}^{cu}(x,r_{_\Sigma})$ é $(R,s)$-equivalent to a subset of some transversal section  $\Sigma\in \{\Sigma_i:i=1,2,\ldots,l\}$.\\
 Since $\mathcal{F}^{s}$ is contracted, there are numbers $C_{_\Sigma}<\infty$ and $\kappa<1$ such that the set $f^{n}(\mathcal{F}^{cu}(x,r))$ is $(\kappa^{n}R,s)$-equivalent to a subset of $f^{n}(\Sigma)$ for some  $\Sigma\in \{\Sigma_i:i=1,2,\ldots,l\}$. Theorem \ref{viana} proves the claim.
 \hspace{14cm}
\end{proof}

\begin{corollary}\label{alic45}
Let $C_{_\Sigma}<\infty$ and $r_{_\Sigma}>0$ as in the above lemma. Then  for every  $\psi\in \mathcal{T}^{cu}$ and  $n\geq 0$, we have:
$$\int\psi \circ f^{-n}d\lambda^{cu}\leq C_{_\Sigma} r^{cu}(\supp(\psi),r_{_\Sigma})\|\psi\|_{\infty}\sum_{i=1}^{l}\int \Phi_{i}\circ f^{-n}d\lambda^{cu}.$$
\end{corollary}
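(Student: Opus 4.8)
The plan is to reduce the estimate to Lemma~\ref{lemma1} by covering $\supp\psi$ with finitely many $cu$-balls of radius $r_{_\Sigma}$ and pushing that covering forward under $f^{n}$. Put $k:=r^{cu}(\supp\psi,r_{_\Sigma})$; by the very definition of this covering number there are points $x_{1},\dots,x_{k}\in\supp\psi$ with $\supp\psi\subset\bigcup_{i=1}^{k}\mathcal{F}^{cu}(x_{i},r_{_\Sigma})$. Since $0\le\psi\le\|\psi\|_{\infty}\mathbf{1}_{\supp\psi}$ on the single $cu$-leaf carrying $\psi$, this yields the pointwise domination $\psi\le\|\psi\|_{\infty}\sum_{i=1}^{k}\mathbf{1}_{\mathcal{F}^{cu}(x_{i},r_{_\Sigma})}$. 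Composing with $f^{-n}$, using $\mathbf{1}_{A}\circ f^{-n}=\mathbf{1}_{f^{n}A}$, and integrating against $\lambda^{cu}$ (which simply returns $\lambda^{cu}(f^{n}A)$, since the measure is not being pulled back, so no Jacobian appears) gives
$$\int\psi\circ f^{-n}\,d\lambda^{cu}\ \le\ \|\psi\|_{\infty}\sum_{i=1}^{k}\lambda^{cu}\big(f^{n}\mathcal{F}^{cu}(x_{i},r_{_\Sigma})\big).$$

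Next, each term $\lambda^{cu}(f^{n}\mathcal{F}^{cu}(x_{i},r_{_\Sigma}))=\lambda^{cu}(f^{n}B^{cu}(x_{i},r_{_\Sigma}))$ is bounded, uniformly in $i$ and in $n\ge 0$, by $C_{_\Sigma}\sum_{j=1}^{l}\lambda^{cu}(f^{n}\Sigma_{j})$, by Lemma~\ref{lemma1}. Finally, because the normalizing functions satisfy $\Phi_{j}\ge 0$ and $\Phi_{j}|_{\Sigma_{j}}\equiv 1$, we have $\Phi_{j}\ge\mathbf{1}_{\Sigma_{j}}$, hence $\lambda^{cu}(f^{n}\Sigma_{j})=\int\mathbf{1}_{f^{n}\Sigma_{j}}\,d\lambda^{cu}\le\int\Phi_{j}\circ f^{-n}\,d\lambda^{cu}$. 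Chaining these three inequalities over $i=1,\dots,k$ and recalling $k=r^{cu}(\supp\psi,r_{_\Sigma})$ gives exactly
$$\int\psi\circ f^{-n}\,d\lambda^{cu}\ \le\ C_{_\Sigma}\,r^{cu}(\supp\psi,r_{_\Sigma})\,\|\psi\|_{\infty}\sum_{j=1}^{l}\int\Phi_{j}\circ f^{-n}\,d\lambda^{cu}.$$

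There is essentially no obstacle here: the statement is a bookkeeping consequence of Lemma~\ref{lemma1}. The only points deserving a word of care are that $\psi\circ f^{-n}$ is again a genuine $cu$-function (its support $f^{n}(\supp\psi)$ is a $d_{cu}$-bounded subset of a single $cu$-leaf), so every integral above lives on one leaf and the manipulations are legitimate; that the constant $C_{_\Sigma}$ and the radius $r_{_\Sigma}$ furnished by Lemma~\ref{lemma1} are uniform both in the centre point and in $n\ge 0$, which is precisely what that lemma asserts; and the elementary identity $\int(\mathbf{1}_{A}\circ f^{-n})\,d\lambda^{cu}=\lambda^{cu}(f^{n}A)$ together with the domination $\Phi_{j}\ge\mathbf{1}_{\Sigma_{j}}$, which is where the specific choice of the normalizing functions $\Phi_{j}$ enters.
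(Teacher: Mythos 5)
Your proof is correct and follows essentially the same route as the paper's: cover $\supp\psi$ by $r^{cu}(\supp\psi,r_{_\Sigma})$ balls of radius $r_{_\Sigma}$, apply Lemma~\ref{lemma1} to each pushed-forward ball, and conclude with $\lambda^{cu}(f^{n}\Sigma_{j})\le\int\Phi_{j}\circ f^{-n}\,d\lambda^{cu}$ coming from $\Phi_{j}|_{\Sigma_{j}}\equiv 1$. The only cosmetic difference is that you dominate $\psi$ pointwise by a sum of indicators before integrating, whereas the paper first bounds the integral by $\|\psi\|_{\infty}\lambda^{cu}(f^{n}(\supp\psi))$ and then covers; the content is identical.
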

\begin{proof}
 The left hand side is bounded by $\|\psi\|_{\infty}.\lambda^{cu}(f^{n}(\supp \psi))$. Since $\supp(\psi)$ is a compact set,  then there are $x_1,\ldots,x_N\in \mathcal{F}^{cu}(\supp(\psi))$ with $N=r^{cu}(\supp(\psi),r_{_\Sigma})$ such that $\supp(\psi)\subset \cup_{i=1}^{N}B^{cu}(x_{i},r_{_\Sigma})$. Therefore,

\begin{eqnarray*}
\int\psi \circ f^{-n}d\lambda^{cu}&\leq&\|\psi\|_{\infty}.\lambda^{cu}(f^{n}(supp \psi))\\
&\leq&\|\psi\|_{\infty}\sum_{i=1}^{N}\lambda^{cu}(f^{n}(B^{cu}(x_{i},r_{_\Sigma})))\\
&\leq&\|\psi\|_{\infty}\sum_{i=1}^{N}C_{_\Sigma} \sum_{j=1}^{l}\lambda^{cu}(f^{n}(\Sigma_{j}))\\
&\leq&\|\psi\|_{\infty}\sum_{i=1}^{N}C_{_\Sigma}\sum_{j=1}^{l}\lambda^{cu}(\Phi_{j}\circ f^{-n})\\
&\leq&\|\psi\|_{\infty}N. C_{_\Sigma}\sum_{j=1}^{l}\lambda^{cu}(\Phi_{j}\circ f^{-n}).
\end{eqnarray*}
This concludes the proof of corollary.
\end{proof}
The next lemma establishes approximately holonomy invariance:
\begin{lemma}\label{lemabasse}
Let $\psi_1, \psi_2\in \mathcal{T}^{cu}$ cu-functions  $(s,\delta)$-equivalents for some  $\delta<\infty$, then for all  $\lambda\in \mathbb{L}$ we have
\begin{itemize}
    \item[(a)] $|\lambda(\psi_1)-\lambda(\psi_2)|\leq \kappa(\delta)\lambda(\psi_2)$,  $\kappa(\delta)\rightarrow 0$ when $\delta\rightarrow 0$.

    \item[(b)] if  $\psi_1 , \psi_2>0$ then there exists $C_0(\delta)\geq 1$ such that $ \lambda(\psi_1)\leq C_0(\delta).\lambda(\psi_2)$. Moreover $C_0(\delta)\rightarrow 1$ when $\delta \rightarrow 0$.
\end{itemize}
\end{lemma}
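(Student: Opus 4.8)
The plan is to reduce both statements to the approximate holonomy invariance already built into the functionals of $\mathbb{L}$, namely Proposition \ref{Propjav}(b) together with the estimate coming from Theorem \ref{viana}. First I would establish the claim for the generating functionals $\ell_n = f^n(\lambda^{cu})$, and then pass to arbitrary $\lambda \in \mathbb{L}$ by linearity and by the fact that $\mathbb{L}$ is the closure (in the topology of pointwise convergence) of nonnegative combinations of the $\ell_n$. Fix $cu$-functions $\psi_1, \psi_2$ that are $(s,\delta)$-equivalent through an $s$-holonomy $h$ of size at most $\delta$, so $\psi_2 = \psi_1 \circ h$ with $\supp(\psi_2) = h(\supp(\psi_1))$.

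For part (a): since $\mathcal{F}^s$ is uniformly contracting, for each $n \geq 0$ the holonomy $h$ restricted to the supports, when transported by $f^{-n}$ to $\supp(\psi_i \circ f^{-n})$, becomes an $s$-holonomy of size at most $\kappa_0^n \delta$ for some $\kappa_0 < 1$; write $h_n$ for it, so that $\psi_2 \circ f^{-n} = (\psi_1 \circ f^{-n}) \circ h_n$. Applying Theorem \ref{viana} to $A_1 = \supp(\psi_1 \circ f^{-n})$ and $A_2 = \supp(\psi_2 \circ f^{-n})$, which are $(s, \kappa_0^n\delta)$-equivalent through $h_n$, gives
$$
\Big| \ell_n(\psi_1) - \ell_n(\psi_2) \Big| = \Big| \int \psi_1 \circ f^{-n}\, d\lambda^{cu} - \int (\psi_1 \circ f^{-n}) \circ h_n\, d\lambda^{cu} \Big| \leq \kappa(\kappa_0^n \delta)\, \ell_n(\psi_2) \leq \kappa(\delta)\, \ell_n(\psi_2),
$$
using that $\kappa$ is nondecreasing in its argument and $\kappa_0 < 1$. (More carefully, one changes variables by $h_n$ inside the integral defining $\ell_n(\psi_2)$ and controls the Jacobian $\tfrac{dh_{n*}\lambda^{cu}}{d\lambda^{cu}}$ by $1 \pm \kappa(\delta)$ as in Theorem \ref{viana}.) Summing over a finite combination $\lambda = \sum c_i \ell_{t_i}$ with $c_i > 0$ gives $|\lambda(\psi_1) - \lambda(\psi_2)| \leq \kappa(\delta)\, \lambda(\psi_2)$, and the inequality passes to the closure $\mathbb{L}$ because both sides are continuous in the topology of pointwise convergence. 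Since $\kappa(\delta) \to 0$ as $\delta \to 0$ by Theorem \ref{viana}, this is (a).

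For part (b): when $\psi_1, \psi_2 > 0$, part (a) already yields $\lambda(\psi_1) \leq (1 + \kappa(\delta))\, \lambda(\psi_2)$, so one may simply take $C_0(\delta) := 1 + \kappa(\delta)$, which is $\geq 1$ and tends to $1$ as $\delta \to 0$. (The positivity hypothesis $\psi_i > 0$ is what guarantees $\lambda(\psi_2) > 0$ — this uses that $\Lambda$, hence every element of $\mathbb{L}$ after normalization, is strictly positive on $cu$-functions with nonempty interior of support, which in turn rests on the transitivity of $f_c$; without it the bound would be vacuous or ill-posed.) I expect the only delicate point to be the bookkeeping in the first step: making precise that the $f^{-n}$-pullback of an $s$-holonomy of size $\delta$ is an $s$-holonomy of exponentially small size, and that Theorem \ref{viana}'s constant $\kappa(\delta)$ can be chosen uniformly (independent of $n$ and of the particular $cu$-function), so that the per-term estimates survive both the finite summation and the passage to the closure. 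Everything else is a direct quotation of Theorem \ref{viana} and linearity.
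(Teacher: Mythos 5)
Your proof is correct and follows the same route the paper takes, which simply cites Lemma~4.6 of \cite{Ali} for item (a) and sets $C_0(\delta)=1+\kappa(\delta)$ for item (b); you have filled in the details of that outsourced argument (uniform Jacobian control from Theorem~\ref{viana} applied to each $\ell_n$, then passage through finite positive combinations and closure). One small bookkeeping slip: since $\supp(\psi_i\circ f^{-n})=f^{n}(\supp(\psi_i))$, the induced holonomy $h_n=f^{n}\circ h\circ f^{-n}$ is obtained by conjugating with $f^{n}$ (not $f^{-n}$), which is indeed what makes its size decay like $\kappa_0^{n}\delta$ because $f$ contracts the stable foliation.
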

\begin{proof}
The proof of item (a) is entirely analogous to the proof of the first part of Lemma 4.6 in \cite{Ali}. To prove item (2), it is sufficient to take $C_0(\delta)=1+\kappa(\delta)$.
\end{proof}

\begin{proposition}\label{Cpositivo}
There is  $C>0$ such that 
 $\lambda(\psi\circ f^{-1})\geq C \lambda(\psi)$ for all $ \lambda\in \mathbb{L}$ and $\psi\in \mathcal{T}^{cu}$.
\end{proposition}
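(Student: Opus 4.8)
The plan is to reduce the statement to a uniform lower bound on the $cu$-Jacobian of $f$, check it on the generating functionals $\ell_n$, and then pass to the closure. The key observation is that the action of $f$ on the $\ell_n$ is just a shift: for every $\psi\in\mathcal{T}^{cu}$ and every $n\in\mathbb{N}$,
$$f(\ell_n)(\psi)=\ell_n(\psi\circ f^{-1})=\int(\psi\circ f^{-1})\circ f^{-n}\,d\lambda^{cu}=\int\psi\circ f^{-(n+1)}\,d\lambda^{cu}=\ell_{n+1}(\psi),$$
so it suffices to compare $\ell_{n+1}$ with $\ell_n$ pointwise. Note also that $\psi\circ f^{-1}$ (equivalently $\psi\circ f^{-n}$) is again an element of $\mathcal{T}^{cu}$, since $f^{-1}$ preserves $\mathcal{F}^{cu}$ and is uniformly bi-Lipschitz on the compact manifold $M$, so its support remains a $cu$-subset and continuity on the support is preserved.

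First I would fix $J_0:=\min_{x\in M}\bigl|\det(Df_x\mid E^{cu}_x)\bigr|$, which is strictly positive because $Df$ restricts to a linear isomorphism of the continuous bundle $E^{cu}$ and $M$ is compact. Given $\psi\in\mathcal{T}^{cu}$, set $g:=\psi\circ f^{-n}\ge 0$. Using the change of variables along leaves $f\colon\mathcal{F}^{cu}(x)\to\mathcal{F}^{cu}(f(x))$, for which $f^{*}\lambda^{cu}_{f(x)}=\bigl|\det(Df\mid E^{cu})\bigr|\,\lambda^{cu}_x$, and the non-negativity of $g$, one gets
$$\ell_{n+1}(\psi)=\int g\circ f^{-1}\,d\lambda^{cu}=\int g\cdot\bigl|\det(Df\mid E^{cu})\bigr|\,d\lambda^{cu}\ge J_0\int g\,d\lambda^{cu}=J_0\,\ell_n(\psi).$$
Consequently, for any $\lambda=\sum_{i=1}^{n}c_i\ell_{t_i}\in\mathbb{L}_1$ with $c_i>0$,
$$\lambda(\psi\circ f^{-1})=\sum_{i=1}^{n}c_i\,\ell_{t_i+1}(\psi)\ge J_0\sum_{i=1}^{n}c_i\,\ell_{t_i}(\psi)=J_0\,\lambda(\psi).$$

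Finally I would pass to the closure. For each fixed $\psi\in\mathcal{T}^{cu}$ the functional $\lambda\mapsto\lambda(\psi\circ f^{-1})-J_0\,\lambda(\psi)$ is continuous for the topology of pointwise convergence on $\mathbb{R}^{\mathcal{T}^{cu}}$, being a finite linear combination of coordinate evaluations; hence $\{\lambda:\lambda(\psi\circ f^{-1})\ge J_0\,\lambda(\psi)\}$ is closed, and the intersection of these sets over all $\psi\in\mathcal{T}^{cu}$ is a closed set containing $\mathbb{L}_1$, hence containing $\mathbb{L}=\overline{\mathbb{L}_1}$. Taking $C:=J_0$ (or $C:=\min\{1,J_0\}$ if one wants $C\le 1$) proves the proposition. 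There is no serious obstacle here; the only point requiring care is the \emph{uniformity} of $C$ in both $\lambda$ and $\psi$, which is exactly what the compactness of $M$ delivers through the positive lower bound $J_0$ on the $cu$-Jacobian.
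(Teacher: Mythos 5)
Your proposal is correct and follows essentially the same route as the paper: the authors also set $C=\min_{x\in M}|Jac(f|\mathcal{F}^{cu}(x))|>0$, apply the leafwise change of variables to get $\ell_{k}(\psi\circ f^{-1})\geq C\,\ell_{k}(\psi)$, extend by positive linearity to $\mathbb{L}_1$, and pass to the closure $\mathbb{L}$. Your explicit justification of the closure step via closedness of $\{\lambda:\lambda(\psi\circ f^{-1})\geq C\lambda(\psi)\}$ in the product topology is a slightly more careful rendering of the same argument.
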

\begin{proof}
 Taking  $C=\min_{x\in M}|Jac(f|\mathcal{F}^{cu}(x))|$, we have that $C>0$ and
\begin{eqnarray*}
\ell_{k}(\psi\circ f^{-1})&=&\int\psi\circ f^{-k-1}d\lambda^{cu}\\
&=&\int\psi\circ f^{-k}. |Jac(f|\mathcal{F}^{cu})(x)|d\lambda^{cu}\\
&\geq& C \int\psi\circ f^{-k}d\lambda^{cu}\\
&=& C \ell_{k}(\psi),
\end{eqnarray*}
so,  for all $\lambda=\sum_{i\in I}c_i\ell_{k_i}$ we have that
\begin{eqnarray*}
\lambda(\psi\circ f^{-1})&=&\sum_{i\in I}c_i\ell_{k_i}(\psi\circ f^{-1})\\
&\geq & C \sum_{i\in I}c_i\ell_{k_i}(\psi)\\
&=&C \lambda(\psi).
\end{eqnarray*}
This extends to all $\lambda\in \mathbb{L}=\overline{\mathbb{L}_1}$. This concludes the proof of the proposition \ref{Cpositivo}.
\end{proof}

 \noindent  \textit{Proof of Proposition \ref{Propjav}.}\\
\textbf{ Step 1:} There exist $C_{_\Sigma},r_{_\Sigma}>0$  such that  $\lambda(\psi)\leq C_{_\Sigma}r^{cu}(\supp(\psi),r_{_\Sigma})\|\psi\|_{\infty}$ for all $\lambda \in \mathbb{L}$. Indeed, take $ C_{_\Sigma},r_{_\Sigma}>0$ as in the Lemma \ref{lemma1}.
By  Corollary \ref{alic45},  for all $\psi\in \mathcal{T}^{cu}$ and  $ n \geq 0$ we have: 
$$\ell_{n} (\psi)\leq C_{_\Sigma}r^{cu}(\supp\psi,r_{_\Sigma})\|\psi\|_{\infty}\sum_{j=1}^{l}\ell_{n}(\Phi_{j}).$$ Therefore, for $\lambda\in \mathbb{L}_{1}$:
\begin{eqnarray}\label{equatio8}
\lambda(\psi)=\sum_{i\in I}c_{i}\ell_{t_{i}}(\psi)&\leq& \sum_{i\in I}c_{i}C_{_\Sigma}r^{cu}(\supp\psi,r_{_\Sigma})\|\psi\|_{\infty}\sum_{j=1}^{l}\ell_{t_i}(\Phi_{j})\nonumber\\
&\leq& C_{_\Sigma}r^{cu}(\supp\psi,r_{_\Sigma})\|\psi\|_{\infty}\sum_{j=1}^{l}\lambda(\Phi_{j})\nonumber\\
&=& C_{_\Sigma}r^{cu}(\supp\psi,r_{_\Sigma})\|\psi\|_{\infty}.
\end{eqnarray}
This proves the claim since $\sum_{j=1}^{l}\lambda(\Phi_{j})=1$ for all $\lambda\in \mathbb{L}_{1}$. It extends to the closure  $\mathbb{L}$ concluding the proof of Step 1.\\

Note that equation (\ref{equatio8}) implies that $\mathbb{L}_{1}$ is a subset of the compact set (with product topology)
$$\prod_{\psi\in \mathcal{T}^{cu}}[0, C_{_\Sigma}.r^{cu}(supp \psi, r_{_\Sigma})\|\psi\|].$$
Hence its closure $\mathbb{L}$ is a compact subset of the locally convex linear space $\mathbb{R}^{\mathcal{T}^{cu}}$.\\

\noindent\textbf{Step 2:} \textit{ There exists  $\Lambda\in \mathbb{L}$ such that $f(\Lambda) = D_u \cdot \Lambda$} and $D_u >0.$\\
We now build the functional $\Lambda$ as a fixed point of the map
$$\overline{f}:\mathbb{L}\rightarrow \mathbb{L}, \lambda\mapsto \dfrac{\lambda(\;.\;\circ f^{-1})}{\sum_{i=1}^{l}\lambda(\Phi_i\circ f^{-1})}.$$

    We claim that $\bar{f}$ is well-defined and continuous from $\mathbb{L}$ to $\mathbb{R}^{\mathcal{T}^{c u}}$. Indeed, the  map $ \lambda \mapsto \sum_{i=1}^{l}\lambda\left(\Phi_{i} \circ f^{-1}\right)$ from $\mathbb{L}$ to $\mathbb{R}$ is well-defined since $\Phi_{i} \circ f^{-1} \in \mathcal{T}^{cu}$, is  continuous, and is positive, more precisely,  the  Lemma \ref{Cpositivo}  yields $C>0$ such that $$\sum_{i=1}^{l}\lambda\left(\Phi_{i} \circ f^{-1}\right)\geq \sum_{i=1}^{l}C.\lambda\left(\Phi_{i} \right)=C>0.$$
 Moreover, $\lambda\left(\cdot \circ f^{-1}\right): \mathcal{T}^{c u} \rightarrow \mathbb{R}$ is well-defined and $\lambda \mapsto \lambda\left(\cdot \circ f^{-1}\right)$ is continuous from $\mathbb{L}$ to $\mathbb{R}^{\mathcal{T}^{c u}}$ which proves the claim. Finally, it is obvious that $\bar{f}\left(\mathbb{L}_{1}\right)=\mathbb{L}_{1}$, hence $\bar{f}: \mathbb{L} \rightarrow \mathbb{L}$ is a well-defined continuous map. Since $\mathbb{L}$ is a convex, compact subset of the locally convex linear space $\mathbb{R}^{\mathcal{T}^{c u}}$, the Schauder-Tychonoff Theorem applies and yields $\Lambda \in \mathbb{L}$ with $\bar{f}(\Lambda)=\Lambda$.
In other words, $f(\Lambda)=D_u \cdot \Lambda$ with
$$D_u=\sum_{i=1}^{l}\Lambda\left(\Phi_{i} \circ f^{-1}\right)>0$$

\noindent\textbf{Step 3:}  $\Lambda$ is $s$-holonomy invariant.\\
The proof this claim is analogous  to the proof of $s$-invariance in  Proposition 4.2 from \cite{Ali}.\\

\noindent\textit{Proof of Proposition \ref{existencia mcu}.}
 Proposition \ref{Propjav}  yields a functional $\Lambda$ on $\mathcal{T}^{c u}$, which contains $C_{c}\left(\mathcal{F}^{c u}(x)\right)$ for all $x \in M .$ We note that, for each $x \in M, \mathcal{F}^{c u}(x)$ is a locally compact metric space and $\Lambda |_{C_{c}\left(\mathcal{F}^{c u}(x)\right)}$ is linear and positive (because this holds for all $\lambda \in \mathbb{L}_{1}$ and extends by continuity to $\mathbb{L}$ ). Hence, Riesz's Representation Theorem $[42,2.14]$ gives a Radon measure $m_{x}^{cu}$ on $\mathcal{F}^{c u}(x)$, for each $x \in M$, characterized by:
$$
\forall \psi \in C_{c}\left(\mathcal{F}^{c u}(x)\right), m_{x}(\psi)=\Lambda(\psi)
$$
The $s$-invariance of each $m_{x}^{cu}$ follows from $s$-invariance property (b) in Proposition \ref{Propjav}.

We prove that each $m_{x}^{c u}$ is atomless using the holonomy invariance. Indeed, assume by contradiction that there is $y \in \mathcal{F}^{c u}(x)$ with $m_{x}^{c u}(\{y\})>0$. Since $\mathcal{F}^{s}$ has no compact leaves and  $\Sigma= \Sigma_1 \cup \Sigma_2 \ldots \cup \Sigma_l$ is a complete transversal section for $\mathcal{F}^{s}$  then  there is $i_0\in \{1,\ldots,l\}$ such that    $\Sigma_{i_0} \cap \mathcal{F}^{s}(y)$ is a infinite set. By $s$-invariance we have that  $m^{cu}_{x_{i_0}}(\{z\})=m^{cu}_{x}(\{y\})$ for all $z\in \Sigma_{i_0} \cap \mathcal{F}^{s}(y)$. So,
$$m^{cu}_{x_{i_0}}(\overline{\Sigma_{i_0}})\geq \sum_{z\in \Sigma_{i_0} \cap \mathcal{F}^{s}(y)}m^{cu}_{x_{i_0}}(\{z\})= \sum_{z\in \Sigma_{i_0} \cap \mathcal{F}^{s}(y)}m^{cu}_{x}(\{y\})=+\infty.$$
 This contradicts the finiteness of $m_{x_{i_0}}^{cu}$ on compact sets.\\

The proof of the continuity comes from the holonomy invariance of $m^{cu}$ (See \cite{Ali}). \hspace{1cm}$\square$\\

\begin{remark}\label{S cu}
Let $\mathcal{M}^{cu}:=\{x\in M: x\in \supp m^{cu}_x \}$. Note that $f(\mathcal{M}^{cu})=\mathcal{M}^{cu}$ and,  by $s$-invariance it is easy to  see   that $\mathcal{M}^{cu}$  is an $s$-saturated and closed set. In fact, let $x\in\mathcal{M}^{cu}$ and $y\in \mathcal{F}^{s}(x)$. By continuity of $s$-foliation  there exists a $s$-holonomy $h^{s}:\mathcal{F}^{cu}(y,\delta)\rightarrow h^{s}(\mathcal{F}^{cu}(y,\delta)) \subset \mathcal{F}^{cu}(x)$ with $h^{s}(y)=x$. Given a open set $U_y\subset \mathcal{F}^{cu}(y)$ with $y\in U_y$, then:
$$m^{cu}_y(U_y)\geq m^{cu}_y(U_y\cap \mathcal{F}^{cu}(y,\delta))= m^{cu}_x(h^{s}(U_y\cap \mathcal{F}^{cu}(y,\delta)))>0.$$
This implies that $y\in \supp(m^{u}_y)$ and, therefore $\mathcal{M}^{cu}$ is $s$-saturated set. Now we prove that $\mathcal{M}^{cu}$ is closed set. Given a sequence $(x_n)_{n\in \mathbb{N}}\subset \mathcal{M}^{cu}$ such that $x_n\rightarrow z\in M$. Given a open set $U\subset \mathcal{F}^{cu}(z)$, since $x_n\rightarrow z$ a  continuity of the foliation $\mathcal{F}^{s}$  and its transversality to $U$ there is $n_0\in \mathbb{N}$ such that $\mathcal{F}^{s}_{loc}(x_{n_0})\cap U\neq \emptyset$. Take $y\in \mathcal{F}^{s}_{loc}(x_{n_0})\cap U$, since $x_{n_0}\in \mathcal{M}^{cu}$ and $\mathcal{M}^{cu}$ is $s$-saturated, then $y\in \mathcal{M}^{cu}$, and  since $y\in U$ we have that $m^{cu}_z(U)=m^{cu}_y(U)>0$. This implies that $\mathcal{M}^{cu}$ is a closed set.  
\end{remark}

\noindent\textit{Proof  of Addendum \ref{adendum para uniformly compact}.} Suppose that there exists a point $y\in M$ such that $m^{cu}_y(\mathcal{F}^{c}(y))>0$. Since $\mathcal{F}^{s}$ has no compact leaves then   there is a point $z\in M$ such that $\mathcal{F}^{cu}(z,1/2)\cap \mathcal{F}^{s}(y)$ is an infinite set. Then there exists a sequence of different points $(y_n)_{n\in \mathbb{N}}$ such that $y_n\in \mathcal{F}^{cu}(z,1/2)\cap \mathcal{F}^{s}(y)$. Since $\mathcal{F}^{c}$ is a uniformly compact foliation, Proposition 3.5 and Lemma 3.6 in \cite{correa2011compact} imply that  $\mathcal{F}^{s}(y_m)\cap \mathcal{F}^{c}(y_m)=y_{m}$, then  if $n\neq m$ then $y_n\notin \mathcal{F}^{c}(y_m)$. By $s$-invariance, we have that
 $$m^{cu}_z(\bigcup_{w\in \mathcal{F}^{cu}(z,1/2) } \mathcal{F}^{c}(w))\geq \sum_{n=1}^{\infty} m^{cu}_z(\mathcal{F}^{c}(y_n))=\sum_{n=1}^{\infty} m^{cu}_y(\mathcal{F}^{c}(y))=\infty.$$
This contradicts the finiteness of $m^{cu}_z$ on compact sets.\\

Now we show that $D_u>1$. Take $x\in M^{cu}$ and $A^{cu}(x,\delta):=\cup_{y\in\mathcal{F}^{c}(x)}\mathcal{F}^{u}(y,\delta) $ for small $\delta>0$. Consider the sequence $(f^{-n}(x))_{n\in \mathbb{N}}$. By compactness of $M^{cu}$, there is a point $p\in M^{cu}$ and a subsequence $(n_k)_{k\in \mathbb{N} } $ such that $\lim_{k\rightarrow \infty} f^{-n_k}(x)=p$. Thus, given $\epsilon>0$ there is $k_\epsilon$ such that $f^{-n_{k_{\epsilon}}}(A^{cu}(x,\delta))$ is $s$-equivalent to a subset $h^ {s} (f^{-n_{k_\epsilon}}(A^{cu}(x,\delta)))\subset A^{cu}(p,\epsilon)$. By $s$-invariance and regularity of $m^{cu}_p$ we have that
  \begin{eqnarray*}
  0=m^{cu}_p(\mathcal{F}^{c}(p))&=&\inf_{\epsilon>0} m^{cu}_{p}\Big( A^{cu} (p,\epsilon))\Big)\\
  &\geq&\inf_{\epsilon>0} m^{cu}_{p}\Big( h^{s} (f^{-n_{k_\epsilon}}(A^{cu}(x,\delta))))\Big)\\
  &\geq &\inf_{k\in \mathbb{N}} m^{cu}_{p}\Big( h^{s}(f^{-n_k}(A^{cu}(x,\delta)))\Big)\\
  &=&\inf_{k\in \mathbb{N}} m^{cu}_{f^{-n_k}(x)}\Big(f^{-n_k}(A^{cu}(x, \delta))\Big)\\
   &=&\inf_{k\in \mathbb{N}} D^{-n_k}_{u}m^{cu}_{x}\Big(A^{cu}(x,\delta)\Big ).
  \end{eqnarray*}
  Suppose on the contrary that $D_u\leq 1$ then $D_{u}^{-n_k}\geq 1$. Consequently,
   \begin{eqnarray*}
  0&=&\inf_{k\in \mathbb{N}} D^{-n_k}_{u}m^{cu}_{x}\Big(A^{cu}(x,\delta)\Big )\\
  &\geq&\inf_{k\geq k_0} m^{cu}_{x}\Big(A^{cu}(x,\delta)\Big)\\
  &=& m^{cu}_{x}\Big(A^{cu}(x,\delta)\Big)>0.\\
  \end{eqnarray*}
  This contradicts the fact that the center leaves have zero measure. Therefore $D_u>1$. \hspace{2.5cm} $\square$ \

\subsection{Building the $u$-conditionals.}  We complete the proof of Theorem \ref{existencia de u margulis}.
We start with the previously built Margulis $cu$-system $\{m^{cu}_x\}_{x\in M}$ and define
the family of measures $\{m^{u}_x\}_{x\in M}$ by extending subsets of $u$-leaves to subsets of
$cu$-leaves along the center foliation. Finally, we use the topological transitive of $f$ to prove that $m^{u}_x$ is fully supported in the unstable leave $\mathcal{F}^{u}(x)$ for all $x\in M$. \\

\noindent\textit{Proof of Theorem \ref{existencia de u margulis}.}\\
\noindent \textbf{Step 1:} \textit{ Inducing a Margulis family on unstable leaves.}\\
Here  we follow essentially the Margulis idea to induce a family of measures on unstable leaves using the already constructed Margulis family $m^{cu}.$ 

Since $\mathcal{F}^{c}$ is a uniformly compact foliation, Lemma 3.6 in \cite{correa2011compact} implies that  $\mathcal{F}^{u}(x)\cap \mathcal{F}^{c}(x)=\{x\}$ for all $ x\in M$. Now, for each $x \in M$ and Borel subset $A\subset \mathcal{F}^{u}(x)$  we let
$$m^{u}_x(A):=m^{cu}_{x}(\widehat{A})\;\; \text{with}\; \widehat{A}:=\bigcup_{y\in A}\mathcal{F}^{c}(y).$$
Then $m^{u}_x$ defines a measure on $\mathcal{F}^{u}(x)$: Observe that when $A$ is measurable then $\widehat{A}$ is measurable. Moreover, it is not difficult to check the countable additivity property of $m_x^u$ using the same property for $m_x^{cu}$. Indeed $\mathcal{F}^{c}(x)\cap \mathcal{F}^{u}(x)=\{x\}$ for all $x\in M$ and consequently $\widehat{A} \cap \widehat{B} = \emptyset$ if $A \cap B = \emptyset.$

Observe that relative compactness and having a nonempty interior, hold for $\widehat{A}$ if it holds for $A$. This implies  that $m^{u}_x$ is Radon like $m^{cu}_x$. Let $u$-sets $A, B\subset M $  $cs$-equivalents, the  dynamical coherence  implies  that $\widehat{A}$ and $\widehat{B}$ are $s$-equivalent. Then the $s$-invariance of  $\{m^{cu}_x\}_{x\in M}$ implies the $cs$-invariance of  $\{m^{u}_x\}_{x\in M}$. Moreover, since center leaves have zero measure, we concluded that  $m^{u}_x$ is atomless for every $x\in M$. Finally, note that 
$$m^{u}_{f(x)}(f(A))=m^{cu}_{f(x)}(\widehat{f(A)})=m^{cu}_{f(x)}(f(\widehat{A}))=D_u m^{cu}_x(\widehat{A})=D_u m^{u}_x(A).$$\\

\noindent \textbf{Step 2:} \textit{ The system is uniformly locally finite.}\\

Given $\delta>0$. For every  $x\in M$ take a $A^{cu}(x,\delta):= \widehat{\mathcal{F}^{u}(x,\delta)}$. Since  $\mathcal{F}^{c}$ is an uniformly compact foliation  then there exist  $R>0$ such that $A^{cu}(x,2\delta)\subset \mathcal{F}^{cu}(x,R)$ for all $x\in M$. Thus there  is $N_{\delta}\in \mathbb{N}$ such that
$$r^{cu}(A^{cu}(x,2\delta),r_{_{\Sigma}})\leq N_{\delta}, \text{ for all } x\in M.$$
Now, take a  continuous function $\phi_x:\mathcal{F}^{cu}(x)$ such that $\|\phi_{x} \|_{\infty}=1$,  $\phi_x|_{A^{cu}(x,\delta)}=1$ and $\supp(\phi_x)\subset A^{cu}(x,2\delta)$. Since $\phi_x\in \mathcal{T}^{cu}$ we have that
\begin{eqnarray*}
m^{cu}_{x}(A^{cu}(x,\delta/2))<m^{cu}_{x}(\phi_x)&=&\Lambda_{cu}(\phi_x)\\
&\leq& C_{_\Sigma}.r^{cu}(\supp\phi_x,r_{_\Sigma})\|\phi_x\|_{\infty}\\
&\leq& C_{_\Sigma}.N_{\delta}.
\end{eqnarray*}
This is the uniform local finiteness property.\\

\noindent \textbf{Step 3:} \textit{$u$-Margulis measures are fully supported on unstable leaves.}\\

Let $\mathcal{S}^{u}:=\{x\in M: x\in \supp(m^{u}_x)\}$. As in Remark \ref{S cu}, its easy to prove that  $f(\mathcal{S}^{u})=\mathcal{S}^{u}$ and $\mathcal{S}^{u}$  is a $cs$-saturated and closed set. 

Since $f_c$ is topologically transitive there exists a point $q\in M_c$ such that $$cl(\mathcal{O}^{-}(q,f_c))=M_c.$$ Taking $p\in \pi_c^{-1}(q)$ we have that $\bigcup_{n\in \mathbb{N}}\mathcal{F}^{c}(f^{-n}(p))$ is dense in $M$. We prove that $p\in \mathcal{S}^{u}$.  Given $\delta>0$ then there is  $\theta>0$ such that $\mathcal{F}^{u}(x,\theta)\subset \widehat{\mathcal{F}^{u}(p,\delta)} $ for all $x \in \mathcal{F}^{c}(p)$.  Since $\{m^{u}_x\}_{x\in M}$ is a non-trivial Margulis system, there is $x_0\in M$  such that $x_0\in \supp(m^{u}_{x_0})$. By transversality of  $\mathcal{F}^{u}$ and $\mathcal{F}^{cs}$, there is $\epsilon>0$ such that  $x_0$ is $cs$-equivalent to a point  $h^{cs}(x_0)\in \mathcal{F}^{u}(y,\theta)$,  for every $y\in B(x_0,\epsilon)$. Since $\bigcup_{n\in \mathbb{N}}\mathcal{F}^{c}(f^{-n}(p))$ is dense in $M$ then there exist $N\in \mathbb{N}$ and  $y\in \mathcal{F}^{c}(f^{-N}(p))$ such that $d(y,x_0)<\epsilon$. So, $x_0$ is  $cs$-equivalent to a point  $h^{cs}(x_0)\in \mathcal{F}^{u}(y,\theta)$. Since $x_0\in \mathcal{S}^{u}$ and $\mathcal{S}^{u}$ is $cs$-saturated  then  $h^{cs}(x_0)\in \mathcal{S}^{u}$ and $m^{u}_{y}(\mathcal{F}^{u}(y,\theta))=m^{u}_{h^{cs}(x_0)}(\mathcal{F}^{u}(y,\theta))>0$. Taking $x=f^{N}(y)$ we have that
\begin{eqnarray*}
m^{u}_p(\mathcal{F}^{u}(p,\delta)) =m^{cu}_p(\widehat{\mathcal{F}^{u}(p,\delta)})
&\geq& m^{cu}_x(\widehat{\mathcal{F}^{u}(x,\theta)})\\
&=& m^{u}_x(\mathcal{F}^{u}(x,\theta))\\
&=& D_u^{-N} m^{u}_{f^{N}(x)}(f^{N}(\mathcal{F}^{u}(x,\theta)))\\
&\geq & D_u^{-N} m^{u}_{f^{N}(x)}(\mathcal{F}^{u}(f^{N}(x),\theta))\\
&= & D_u^{-N} m^{u}_{y}(\mathcal{F}^{u}(y,\theta))\\
&>&0.
\end{eqnarray*}
Since $\delta$ is arbitrarily small, we conclude that $p\in \supp(m^{u}_p)$, i.e. $p\in \mathcal{S}^{u}$. Since $\mathcal{S}^{u}$ is $c$-saturated and invariant set  we have that $\bigcup_{n\in \mathbb{Z}}\mathcal{F}^{c}(f^{n}(p))\in \mathcal{S}^{u}$. As $\mathcal{S}^{u}$ is closed set we have that $M=cl\big(\bigcup_{n\in \mathbb{Z}}\mathcal{F}^{c}(f^{n}(p))\big)\subset\mathcal{S}^{u}$. This implies that $\mathcal{S}^{u}=M$, i.e. $m^{u}_x$ has fully support in $\mathcal{F}^{u}(x)$ for all $x\in M$.\\

\noindent \textbf{Step 4:} \textit{ Continuity of the $u$-Margulis system}\\
    As $\mathcal{F}^{u}$ and $\mathcal{F}^{cs}$ are transverse, for any $x_0\in M$, there is a bi-foliated box $B$ at $x_0$. Then every unstable plaques in $B$ are $cs$-equivalents and so, given $x,y\in B$ there exists $cs$-holonomy $h^{cs}_{x,y}:\mathcal{F}^{u}_{B}(x)\rightarrow \mathcal{F}^{u}_{B}(y)$. Define $h^{cs}_{0}:B\times \mathcal{F}^{u}_{B}(x_0)\rightarrow B$ by $h^{cs}_{0}(x,z)=h^{cs}_{x_0,x}(z)$. Let $\phi\in C(M)$. By $cs$-holonomy invariance $m^{u}_x(\phi|\mathcal{F}^{u}_{B}(x))=m^{u}_x(\phi\circ h^{cs}_0(x,.))$. Hence,
\begin{eqnarray*}
\left|m_{x}^{u}\left(\phi \mid \mathcal{F}_{B}^{u}(x)\right)-m_{x_{0}}^{ u}\left(\phi \mid \mathcal{F}_{B}^{u}\left(x_{0}\right)\right)\right| \leq \int_{\mathcal{F}_{B}^{u}\left(x_{0}\right)}\left|\phi \circ h_{0}(x, \cdot)-\phi\right| d m_{x_{0}}^{u}
\end{eqnarray*}
which converges to $0$ as $x$ goes to $x_0$. This is the continuity property.\\

\noindent \textbf{Step 5:} \textit{$D_u =e^{h_{top}(f)}$ and $\mu^{u}_x\equiv m^{u}_x$ for every measure of maximal $u$-entropy $\mu$.}\\
The system $\{m^{u}_x\}_{x\in M}$ is $cs$-invariant, $D_u>1$ and $m^{u}_x$ is fully support on $\mathcal{F}^{u}(x)$ for all $x\in M$, continuing as in Lemma 5.10  from \cite{tahzibi2021unstable} we obtain that $\log(D_u)=\exp(h^u_{top}(f)$. Now by Proposition \ref{symmetricentropy}, $h_{top}(f) = h^u_{top}(f)=\log D_u$.

\section{Support of maximal measures} \label{section:Margulis measures: Support of maximal measures}

In this section, we analyze the support of maximal entropy measures and their uniqueness on minimal sets using the existence of Margulis families. Let us emphasize that the vast majority of the result about support of maximal measures in this section is for general partially hyperbolic diffeomorphisms and not just with compact center leaves.

\begin{proposition} \label{saturado}
    Let $f \in \diff^2(M)$ be a partially hyperbolic diffeomorphism with $u-$Margulis family of measures fully supported on unstable leaves for any $x \in M$. Then the support of any measure of maximal entropy with non-positive center Lyapunov exponents is $u-$saturated. In particular, if there are both $u$ and $s$ non-trivial Margulis families, then the support of any measure of maximal entropy with vanishing center Lyapunov exponents is $su-$saturated, i.e it is a union of accessibility classes.
\end{proposition}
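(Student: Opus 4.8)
The plan is to first tie $\mu$ to the Margulis $u$-conditionals in order to obtain a \emph{local} statement — a local unstable plaque through $\mu$-a.e.\ point lies in $\supp\mu$ — and then promote this to the global statement by a recurrence argument together with continuity and invariance of $\supp\mu$.

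\emph{Reduction and the local step.} By the ergodic decomposition and affinity of entropy, combined with Proposition~\ref{ledrapier}, $\mu$-almost every ergodic component of $\mu$ is again a measure of maximal entropy with non-positive center exponents, and $\supp\mu$ is the closure of the union of the supports of these components; since the closure of a union of $u$-saturated sets is $u$-saturated (by continuity of $\mathcal{F}^u$), it suffices to treat $\mu$ ergodic. Then Proposition~\ref{ledrapier} and the inequality $h^u_\nu\le h_\nu$ force $h^u_\mu(f)=h_\mu(f)=h_{top}(f)\ge h^u_{top}(f)\ge h^u_\mu(f)$, so $\mu$ is a measure of maximal $u$-entropy. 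The characterization of such measures in terms of Margulis conditionals — the unstable analogue of the Ledrappier characterization of SRB measures, as in the last part of Theorem~\ref{existencia de u margulis} and its general form in~\cite{tahzibi2021unstable} — gives that the disintegration $\{\mu^u_x\}$ of $\mu$ along $\mathcal{F}^u$, with respect to an increasing subordinate partition $\xi$, is equivalent to $\{m^u_x\}$. Since each $m^u_x$ has full support in $\mathcal{F}^u(x)$ by hypothesis, for $\mu$-a.e.\ $x$ the conditional $\mu^u_x$ has full support in $\xi(x)$; as $\xi(x)$ contains a neighbourhood of $x$ in $\mathcal{F}^u(x)$, and as $\supp\mu_\xi\subseteq\supp\mu$ for a.e.\ $\xi$ (test against a countable basis of open sets), we conclude that there exist $\delta_0>0$ and a Borel set $A$ with $\mu(A)>0$ such that $\mathcal{F}^u(x,\delta_0)\subseteq\supp\mu$ for every $x\in A$.

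\emph{Globalization.} By Poincaré recurrence for $f^{-1}$, $\mu$-a.e.\ $x\in A$ has $f^{-n_k}x\in A$ for some $n_k\to\infty$, so $\mathcal{F}^u(f^{-n_k}x,\delta_0)\subseteq\supp\mu$; since $f^{-1}$ contracts uniformly along $\mathcal{F}^u$, any $y\in\mathcal{F}^u(x)$ satisfies $d^u(f^{-n_k}x,f^{-n_k}y)<\delta_0$ for $k$ large, whence $f^{-n_k}y\in\supp\mu$ and $y=f^{n_k}(f^{-n_k}y)\in f^{n_k}(\supp\mu)=\supp\mu$. Thus $\mathcal{F}^u(x)\subseteq\supp\mu$ for $\mu$-a.e.\ $x$, so $G_1:=\{x:\mathcal{F}^u(x,1)\subseteq\supp\mu\}$ is dense in $\supp\mu$; by continuity of $\mathcal{F}^u$ and closedness of $\supp\mu$, $G_1$ is also closed in $\supp\mu$, hence $G_1=\supp\mu$. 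Finally, given $x\in\supp\mu$ and $y\in\mathcal{F}^u(x)$, choose $n$ with $d^u(f^{-n}x,f^{-n}y)<1$; since $f^{-n}x\in\supp\mu=G_1$, we get $f^{-n}y\in\mathcal{F}^u(f^{-n}x,1)\subseteq\supp\mu$, so $y\in\supp\mu$. Therefore $\supp\mu$ is $u$-saturated.

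\emph{The ``in particular''.} An $s$-Margulis family fully supported on stable leaves is precisely the $u$-Margulis family of $f^{-1}$. If $\mu$ is a measure of maximal entropy with vanishing center exponents, it has non-positive center exponents for both $f$ and $f^{-1}$, and it is a measure of maximal entropy for both, since $h_\mu(f)=h_\mu(f^{-1})$ and $h_{top}(f)=h_{top}(f^{-1})$. Applying the first part to $f$ and to $f^{-1}$ shows $\supp\mu$ is simultaneously $u$- and $s$-saturated; an $su$-saturated set contains the whole accessibility class of each of its points, so it is a union of accessibility classes. The main obstacle is the local step: having the equivalence between the $\mathcal{F}^u$-disintegration of a maximal $u$-entropy measure and the Margulis $u$-conditionals in this generality, since the version in the excerpt is stated for $\phc^2_{c=1}$ and ergodic measures; everything after that is routine given uniform expansion of $\mathcal{F}^u$ and continuity of the foliation.
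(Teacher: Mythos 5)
Your proof is correct and follows the same overall strategy as the paper: reduce to ergodic $\mu$, invoke Proposition~\ref{ledrapier} to conclude that $\mu$ is a measure of maximal $u$-entropy, use the Margulis-conditional characterization (Proposition~\ref{ProposAli}) to place typical unstable plaques inside $\supp\mu$, and then globalize by density together with continuity of $\mathcal{F}^u$ and closedness of $\supp\mu$; the \emph{in particular} part is identical (apply to $f$ and $f^{-1}$). The one genuine difference is the globalization step. The paper sets $A_0=\{x:\xi(x)\subset\supp\mu\}$ and $A^u=\bigcap_{k\ge1}f^k(A_0)$, then asserts ``by definition'' that $A^u=\{x:\mathcal{F}^u(x)\subset\supp\mu\}$ — a step that in fact relies on the increasing and generating properties of $\xi$ together with uniform expansion so that $\bigcup_k f^k(\xi(f^{-k}x))$ exhausts the leaf. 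You replace this with a more elementary and explicit argument: choose a positive-measure set $A$ on which the $\xi$-plaques contain a ball of definite size $\delta_0$, apply Poincaré recurrence for $f^{-1}$ into $A$, and use the uniform contraction of $f^{-1}$ along $\mathcal{F}^u$. This buys transparency (no appeal to the generating property beyond what you actually use) at the small cost of invoking recurrence; both routes encode the same mechanism of blowing up a definite plaque under forward iteration. You also make the reduction to the ergodic case via affinity of entropy explicit, which the paper leaves implicit when it applies the Ledrappier–Young inequality. Minor stylistic point: in the step ``$G_1$ is dense in $\supp\mu$'' you should say $G_1\cap\supp\mu$ is dense and closed in $\supp\mu$, but this is exactly what your argument gives.
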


\begin{proof}

To prove the first part of the above proposition, observe that by Proposition \ref{ledrapier} we conclude that any $\mu$ satisfying the hypothesis is $u-$mme.  
The following proposition  is proved in  \cite{Ali}. 

\begin{proposition}[Proposition  5.5, \cite{Ali}]\label{ProposAli}
Let $f \in \operatorname{Diff}^{2}(M)$ be a diffeomorphism with a Margulis $u$-system $\left\{m_{x}^{u}\right\}_{x \in M}$ with dilation factor $D_{u}>1$ and $m_{x}^{u}$ fully supported on $\mathcal{F}^{u}(x)$ for each $x \in M $. If $\mu$ is an ergodic measure,  then $\mu$ is u-mme i.e,   $h^u_{\mu}= h^u_{top}(f)$, if and only if the disintegration of $\mu$ along $\xi$ is given by $m_{x}^{u}/m_{x}^{u}(\xi(x)), \mu$-a.e. for any generating  increasing  partition  subordinated  to $\mathcal{F}^{u}$.
\end{proposition}

Let $\xi$ be a generating subordinated to $\mathcal{F}^u$ partition and $m^u_{\xi(x)} := \frac{m_x^u|_{\xi(x)}}{m^u(\xi(x))}$ (the normalized restriction.) Let $\{\mu^u_{x} \}$ be the disintegration of $\mu$ along $\xi.$ By the above proposition we have that $\mu^u_x = m^u_{\xi(x)}$ for $\mu-$ almost every $x \in M.$ So,
$$
1= \mu(\supp(\mu)) = \int \mu^u_{\xi(x)}(\supp(\mu)) d\mu(x) = \int m^u_{\xi(x)}(\supp(\mu)) d\mu(x). 
$$
The above equation implies that $m^u_{\xi(x)}(\supp(\mu)) =1$ for $\mu-$almost every $x.$ As $m^u_x$ is totally supported on $\xi(x)$ we have that $\xi(x) \subset \supp(\mu)$ for $\mu-$almost every $x.$ Consequently $\mu(A_0) = 1$ where $A_0 := \{x \in \supp(\mu) : \xi(x) \subset \supp(\mu)\}$. So, taking $A_k := f^k(A_0)$ $\mu(A_k) =1$ for all $k \geq 1.$ We conclude that
$$\mu(A^u) = 1 \quad \text{where} \quad A^u := \bigcap_{k=1}^{\infty} A_k.$$

By definition  $A^u = \{x \in \supp(\mu) : \mathcal{F}^u(x) \subset \supp(\mu)\}.$ As $A^u \subset \supp(\mu)$ and $\mu(A^u) =1$ we have $\supp(\mu) = cl(A^u).$ Now take any $ x \in \supp(\mu)$ then there exists a sequence of points $x_k \in A^u$ converging to $x$. As $\mathcal{F}^u(x_k) \in \supp(\mu)$ by continuity of unstable foliation we conclude that $\mathcal{F}^u(x) \in \supp(\mu).$

To finish the proof of the proposition, just apply the above argument for $f$ and $f^{-1}.$

\end{proof}

 Recall from subsection \ref{gammasigma} that $\Gamma^u(f)$ is the set of all closed invariant $u-$saturated subsets of $M$. 
\begin{lemma}\label{L1}
Let $f:M\rightarrow M$ be a $C^{2}$ partially hyperbolic diffeomorphism with a Margulis system $\{m^{u}_x\}_{x\in M}$, $cs$-invariant and $m^{u}_x$ fully supported on $\mathcal{F}^{u}(x)$ for all $x\in M$. If $K\in \Gamma^{u}(f)$ then there is $\mu \in MME^u(f)$ with $\supp(\mu) = K.$ In particular, if $f \in \phc^{2}_{c=1}(M)$ then $\mu$ is a measure of maximal entropy.
\end{lemma}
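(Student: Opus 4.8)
The plan is to use the Margulis $u$-system to build an $f$-invariant probability measure supported exactly on $K$, then verify it is a $u$-measure of maximal entropy, and finally promote it to a genuine maximal entropy measure via Proposition \ref{symmetricentropy}.

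\textbf{Construction of the measure.} First I would fix a generating increasing partition $\xi$ subordinated to $\mathcal{F}^u$. For each $x\in K$ the leaf $\mathcal{F}^u(x)$ lies in $K$ (since $K$ is $u$-saturated), so the normalized restrictions $m^u_{\xi(x)}:=m^u_x|_{\xi(x)}/m^u_x(\xi(x))$ are probability measures carried by plaques inside $K$. The idea is to integrate these conditionals against a measure on the quotient: one uses the $cs$-invariance of $\{m^u_x\}$ together with the fact that $D_u$ is the dilation constant to check that the family $\{m^u_{\xi(x)}\}$ is compatible with the Margulis cocycle, and then glue them into an $f$-invariant Borel probability measure $\mu$ on $K$ whose disintegration along $\xi$ is exactly $\{m^u_{\xi(x)}\}$. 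Concretely I would take a reference transversal, push forward the $cs$-holonomy-invariant normalizing data, and invoke the standard measure-gluing/Rokhlin reconstruction as in the construction of Margulis-type MMEs (as in \cite{Ali}); alternatively one takes any weak-$*$ limit of $\frac1N\sum_{n=0}^{N-1} f^n_*\nu$ for a suitable initial measure $\nu$ built from $m^u$ and argues that the limit still has $m^u$-conditionals by holonomy invariance. The minimality of $K$ (as a $u$-saturated invariant set) guarantees that $\supp\mu$, which is closed, $u$-saturated and invariant, must equal $K$.

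\textbf{Maximality of $u$-entropy.} Once $\mu$ is constructed with disintegration along $\xi$ equal to $m^u_x/m^u_x(\xi(x))$, I would apply Proposition \ref{ProposAli} (Proposition 5.5 of \cite{Ali}): that proposition states precisely that an ergodic measure is $u$-mme if and only if its disintegration along a generating subordinated partition is given by the normalized Margulis conditionals. If $\mu$ is not ergodic I would pass to an ergodic component; since the disintegration of an ergodic component along $\xi$ still coincides $\mu$-a.e.\ with the Margulis conditionals, each ergodic component is $u$-mme, and one such component has support contained in $K$ — by minimality of $K$ its support is all of $K$, so we may take that component as our $\mu$. Hence $h^u_\mu(f)=h^u_{top}(f)$, i.e.\ $\mu\in MME^u(f)$ with $\supp\mu=K$.

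\textbf{From $u$-mme to mme.} For the final sentence, when $f\in\phc^2_{c=1}(M)$, Proposition \ref{symmetricentropy} gives $h^u_{top}(f)=h_{top}(f)$ and $MME^u(f)\subseteq MME(f)$, so $\mu$ is automatically a measure of maximal entropy. The main obstacle I anticipate is the gluing step: carefully showing that the locally defined normalized Margulis conditionals $m^u_{\xi(x)}$ do assemble into a globally defined $f$-invariant probability measure (rather than just a $\sigma$-finite object), using $cs$-invariance to get consistency across different transversals and the finiteness/local uniform finiteness of the $u$-system from Theorem \ref{existencia de u margulis} to get a genuine probability measure. The ergodicity-versus-support bookkeeping (ensuring the ergodic component we keep is supported on all of $K$) is the other point requiring care, and it is exactly where the minimality of $K$ in $\Gamma^u(f)$ is used.
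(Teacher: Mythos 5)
Your alternative route — take a normalized Margulis $u$-measure $\nu$ on a plaque inside $K$, push forward, Ces\`aro-average $\frac1N\sum_{n<N} f^n_*\nu$, and pass to a weak-$*$ limit whose unstable conditionals remain Margulis by holonomy invariance — is exactly what the paper does (starting from an open disk $\gamma\subset\mathcal{F}^u(x_0)\cap K$ with $m^u_{x_0}(\gamma)=1$ and invoking Lemma 5.10 of \cite{tahzibi2021unstable}). So your proposal is essentially correct and essentially the paper's argument; the two obstacles you flag as "requiring care" are in fact handled painlessly by this Ces\`aro route. First, $\supp\mu\subset K$ is immediate: since $K$ is $f$-invariant and $\gamma\subset K$, every $\mu_n$ is supported in $K$, hence so is any weak-$*$ limit — no gluing or Rokhlin-reconstruction step is needed, and indeed the "integrate against a measure on the quotient" version of your plan is hard to carry out because the unstable quotient of $K$ is typically degenerate. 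Second, the $u$-saturation of $\supp\mu$ you need for the minimality argument is not automatic from being a $u$-mme; it is supplied by (the proof of) Proposition~\ref{saturado}, which you should cite at that point rather than attribute directly to minimality. Finally, the ergodic-component bookkeeping you propose is a perfectly valid safety net but is not strictly necessary: $\mme^u(f)$ as defined does not require ergodicity, and the argument in Proposition~\ref{saturado} giving $u$-saturation of $\supp\mu$ works for the (possibly non-ergodic) limit measure, so once $\supp\mu$ is a nonempty closed $f$-invariant $u$-saturated subset of the minimal set $K$, it equals $K$.
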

\begin{proof}
Following \cite{tahzibi2021unstable} we take $x_0\in K$ and  an open disk  $\gamma\subset \mathcal{F}^{u}(x_0)$ such that $m^{u}_{x_0}(\gamma)=1$ and let   $m^{u}_\gamma(E):=m^{u}_{x_0} (E\cap \gamma)$ for all $E\subset M$. Let $\mu_n=\dfrac{1}{n}\sum_{i=0}^{n-1}f^{j}_{*}m^{u}_{\gamma}$. By Lemma 5.10 from \cite{tahzibi2021unstable}  we have that any accumulation point $\mu$ of $(\mu_n)_{n\in \mathbb{N}}$ has conditional measure along  $\mathcal{F}^{u}$ coinciding with $\{m^{u}_x\}_{x\in M}$ $\mu$-a.e. $x\in M$. Therefore $h_{\mu}(f,\mathcal{F}^{u})=\log D_u$. This implies that $\mu \in MME^u(f).$ The proof of Proposition  \ref{saturado} implies that  $\supp(\mu)$ is $u$-saturated set. Moreover, $\supp(\mu)\subset K$. Indeed, Given $y\in M\backslash K$ then there exists $\epsilon>0$ such that $B(y,\epsilon) \cap K= \emptyset$. As $f^{n}(\gamma)\subset K$ for every $n\in \mathbb{N}$, we have that  $f^{n}(\gamma)\cap B(y,\epsilon)=\emptyset$ for all $n\in \mathbb{N}$. This implies that $\mu_n(B(y,\epsilon))=0$ for all $n\in \mathbb{N}$, then $\mu(B(y,\epsilon))=0$. So, $y\notin \supp(\mu)$ for every $y\in M\backslash K$, i.e. $\supp(\mu)\subset K$. Finally, as $\supp(\mu)$ is closed, $u$-saturated  and $f$-invariant set, we concluded that $\supp(\mu)=K$.
Finally using Proposition \ref{symmetricentropy} we complete the proof of the proposition.
\end{proof}

The next proposition is a corollary of Hopf-type argument using  properties of the Margulis family. 

 \begin{proposition}\label{PROPneg} If $\mu$ is an ergodic measure of $u$-maximal entropy with  $\lambda_c(\mu)<0$ and $ K :=\supp(\mu)  \in \Gamma^u(f),$  then $K \cap \supp(\nu)=\emptyset$ for every ergodic $u$-MME $\nu\neq \mu$. In particular,  $\mu$ is the unique $u$-$\mme$ for $f|_{K}$.
\end{proposition}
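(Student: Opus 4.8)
The plan is to argue by contradiction. Assuming $K\cap\supp(\nu)\neq\emptyset$ for some ergodic $u$-$\mme$ $\nu\neq\mu$, I will first use minimality of $K$ to upgrade this to $K\subseteq\supp(\nu)$, and then run a Hopf-type argument in which the Margulis family $\{m^u_x\}$ plays the role usually played by absolute continuity of holonomies.

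\textbf{Reduction via minimality.} Since $\nu$ is a $u$-$\mme$, Proposition \ref{ProposAli} gives that its disintegration along any generating subordinate partition of $\mathcal F^u$ is the normalized Margulis family; the argument in the proof of Proposition \ref{saturado} (which uses only this fact) then shows that $\supp(\nu)$ is closed, $f$-invariant and $u$-saturated. Hence $K\cap\supp(\nu)$ is a non-empty closed $f$-invariant $u$-saturated subset of $K$, so minimality of $K\in\Gamma^u(f)$ forces $K\subseteq\supp(\nu)$. The ``in particular'' statement is then immediate: an ergodic $u$-$\mme$ $\nu\neq\mu$ of $f|_K$ would have $\emptyset\neq\supp(\nu)=K\cap\supp(\nu)$, contradicting the first assertion.

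\textbf{Birkhoff basins.} Because $\lambda_c(\mu)<0$, the measure $\mu$ is hyperbolic with Oseledets splitting $E^{cs}=E^s\oplus E^c$ (contracted, uniformly plus non-uniformly) and $E^u$. Pesin theory provides, on a full $\mu$-measure set $\mathcal R$, stable manifolds $W^s(x)$ of dimension $\dim E^{cs}=\dim\mathcal F^{cs}$ — hence \emph{open in} $\mathcal F^{cs}(x)$, and containing $\mathcal F^s(x)$ — and unstable manifolds which by unique integrability of $E^u$ satisfy $W^u(x)=\mathcal F^u(x)$. Fixing a countable dense $\{\phi_k\}\subset C(M)$ and letting $G\subseteq\mathcal R$ be the full $\mu$-measure set of points Birkhoff-generic for $\mu$ forward and backward against all $\phi_k$, the sets $G^s:=\bigcup_{x\in G}W^s(x)$ and $G^u:=\bigcup_{x\in G}\mathcal F^u(x)$ have full $\mu$-measure; $G^s$ is open in each $cs$-leaf it meets, $G^u$ is $u$-saturated with $G^u\subseteq\supp(\mu)=K$, and every point of $G^s$ (resp. $G^u$) lies in the forward basin $B^+(\mu)$ (resp. backward basin $B^-(\mu)$) of $\mu$. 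Since $\nu$ is ergodic and distinct from $\mu$, its own basins are disjoint from those of $\mu$, whence $\nu(G^s)=\nu(G^u)=0$.

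\textbf{Hopf step and main obstacle.} Pick $p\in G\cap K$ (non-empty since $\mu(G)=1=\mu(K)$) and a bi-foliated box $B\cong D^u\times D^{cs}$ at $p$ with transversal $D^{cs}\subseteq W^s_{\mathrm{loc}}(p)$. Both $\mu|_B$ and $\nu|_B$ disintegrate along the $\mathcal F^u$-plaques of $B$ into normalized copies of $m^u$ integrated against transverse measures $\mu_\perp,\nu_\perp$ on $D^{cs}$, and $p$ lies in the support of both. It suffices to prove $\mu_\perp=\nu_\perp$: then $\mu=\nu$ on $B$, hence $\mu=\nu$ by ergodicity, contradicting $\nu\neq\mu$. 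Here the Margulis family does the work: transporting the $u$-saturated, leafwise-$m^u$-full set $G^u$ by the $\mathcal F^u$-holonomies inside $B$, and using the $cs$-invariance and full support of $\{m^u_x\}$ together with the openness of the $W^s(x)$ in $cs$-leaves, one shows that $\mathcal F^u$-holonomy carries $\mu_\perp$-typical transversal points to $m^u$-full subsets of $B^-(\mu)\cap K$, and symmetrically for $\nu$; as $B^-(\mu)\cap B^-(\nu)=\emptyset$ this is incompatible with $\mu_\perp$ and $\nu_\perp$ sharing the support point $p$ unless they coincide. The delicate point — and the main obstacle — is exactly this matching: the $cs$-direction is only non-uniformly contracted, so the relevant transverse holonomy is a Pesin (Lyapunov) holonomy, and the Hopf chain must be run against the genuinely holonomy-invariant reference measures $m^u$ rather than Riemannian volume, carefully reconciling the measure-theoretic Pesin structure of $\mu$ with the topological Margulis structure shared by $\mu$ and $\nu$. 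It is convenient to first shrink the scope of the disagreement: since $\lambda_c^+(\mu)=\lambda_c^+(\nu)=0$ the fibre entropy of $\pi$ vanishes for both measures, so $\pi_*\mu$ and $\pi_*\nu$ are each the unique measure of maximal entropy of $f_c$ (topologically conjugate to a transitive Anosov homeomorphism), hence $\pi_*\mu=\pi_*\nu$ and $\mu,\nu$ differ only along the one-dimensional center fibres; negativity of $\lambda_c(\mu)$ then forces the center conditionals of $\mu$ to be atomic along fibres, which is precisely the rigidity that feeds the holonomy comparison above.
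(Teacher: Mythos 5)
Your overall strategy (minimality to get $K\subseteq\supp(\nu)$, then a Hopf argument with $\{m^u_x\}$ as the reference transverse measure) is the right one and matches the paper's, and the minimality reduction and the Birkhoff/basin setup are correct. However, the proof is genuinely incomplete at what you yourself flag as ``the main obstacle'': you set up a comparison of transverse disintegrations $\mu_\perp$ and $\nu_\perp$ on a $D^{cs}$ transversal and declare it suffices to show $\mu_\perp=\nu_\perp$, but you never carry out that comparison. That framing is in fact more than is needed and is harder than the real argument. The paper's proof does not try to identify the measures $\mu$ and $\nu$ on a box; it only shows $B_\mu\cap B_\nu\neq\emptyset$, which by ergodicity already gives $\mu=\nu$. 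The concrete mechanism you are missing is the following use of $\lambda_c(\mu)<0$: by Pesin theory and Lusin's theorem one finds, for $\mu$-a.e.\ $x$, a set $J\subset\mathcal F^u(x,1/2)\cap B_\mu$ with $m^u_x(J)>0$ and $\inf_{z\in J}\diam^{cs}(W^s_{\mathrm{loc}}(z))>\delta$ for a fixed $\delta>0$ (this is where negativity of the center exponent is used, to make the Pesin stable manifold $cs$-dimensional of uniformly controlled size on a positive-$m^u$-measure set). Then, for any $y\in B(x,\epsilon)\cap B_\nu$ (which exists since $K\subseteq\supp(\nu)$ and $\nu(B_\nu)=1$), the local $cs$-holonomy $h:J\to\mathcal F^u(y)$ is defined, and the $cs$-invariance of the Margulis family gives $m^u_y(h(J))>0$. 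Since $h(z)\in W^s_{\mathrm{loc}}(z)$ and $B_\mu$ is saturated by Pesin stable manifolds, $h(J)\subset B_\mu$, while $m^u_y(M\setminus B_\nu)=0$; hence $B_\mu\cap B_\nu\neq\emptyset$. Your closing detour through $\pi_*\mu=\pi_*\nu$ and ``atomic center conditionals'' is also off target: the proposition is stated and used in a generality where no such quotient projection $\pi$ to an Anosov base is assumed (it is for general $C^2$ partially hyperbolic diffeomorphisms equipped with a $cs$-invariant, fully supported $u$-Margulis family), and moreover you invoke $\lambda_c^+(\nu)=0$ without justification even though nothing was assumed about $\nu$'s center exponent. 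That entire final reduction should be dropped; the Hopf step above is self-contained and avoids it.
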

\begin{proof}
First, note that   $K=cl\big(\bigcup_{n\in  \mathbb{Z}}\mathcal{F}^{u}(f^{n}(x))\big)$ for every $x\in K$ since $K\in \Gamma^{u}(f)$. Let $\nu$ be an ergodic $u$-MME. Suppose that $\supp(\mu) \cap \supp(\nu)\neq \emptyset$. Take $x\in \supp(\mu) \cap \supp(\nu)$. As $\supp(\nu)$ is $u$-saturated and invariant closed set, we obtain  $$K=cl\Big(\bigcup_{\in \mathbb{Z}}\mathcal{F}^{u}(f^{n}(x))\Big)\subset \supp(\nu).$$
As $\nu$ is an ergodic measure of maximal $u$-entropy  the  Proposition \ref{ProposAli} implies that their conditional measures along unstable foliation for $\mu$ and $\nu$ are both given by the $u$-Margulis systems $\left\{m_{x}^{u}\right\}_{x \in M}$. Let $B_{\mu}:=\{x \in M:$ $\left.\frac{1}{n} \sum_{k=0}^{n-1} \delta_{f^{k} x} \rightarrow \mu\right\}$ be the ergodic basin of $\mu$ (the convergence is in the weak star topology as $n \rightarrow+\infty)$. Taking $K=\supp(\mu)$, by ergodicity $\mu\left(K \backslash B_{\mu}\right)=0$ and so $m_{x}^{u}\left(K \backslash B_{\mu}\right)=0$ for $\mu-$ a.e $x$. Similarly, letting $B_{\nu}$ be the ergodic basin of $\nu$, for $\nu-$ a.e $y$ we have $m_{y}^{u}\left(M\backslash B_{\nu}\right)=0$. \\

As the center Lyapunov exponent of $\mu$  is negative, for $\mu$ -a.e. $x\in B_{\mu}$,  $ m_{x}^{u}(K\backslash\left.B_{\mu}\right)=0$ for $\mu$-a.e. $x \in B_{\mu} $ and there is   $J \subset \mathcal{F}^{u}(x,1/2) \cap B_{\mu}$ with $m_{x}^{u}(J)>0$ and large size of the Pesin local stable manifolds: $\inf_{z\in J}diam^{cs}(\mathcal{W}_{\text {loc }}^{s}(z))>\delta$,  for some $\delta>0$. As $J \subset \mathcal{F}^{u}(x,1/2)$, there exists  $\epsilon>0$ such that for all $y\in B(x,\epsilon)$ there is a $(cs,\delta)$-holonomy $h^{s}:J \rightarrow h^{s}(J) \subset \mathcal{F}^{u}(y)$.\\

As  $K\subset\supp(\nu)$ then  there exists  $y \in B_{\nu}\cap B(x,\epsilon)$ with $m_{y}^{u}\left(M\backslash B_{\nu}\right)=0$. As $y \in B(x,\epsilon)$,  there is a local $cs$-holonomy $h: J \rightarrow \mathcal{F}^{u}(y)$. This holonomy is absolutely continuous from $\left(J, m_{x}^{u}\right)$ to $\left(\mathcal{F}^{u}(y), m_{y}^{u}\right)$, hence:
$$m_{y}^{u}(h(J))>0.$$
By the choice of $J, h(z)$ belongs to the Pesin stable manifold of $z$. Since the ergodic basin is saturated by stable manifolds, $h(J) \subset B_{\mu}$ and therefore $m_{y}^{u}\left(B_{\mu}\right)>0 .$ As $m_{y}^{u}\left(M \backslash B_{\nu}\right)=0$, we conclude that $B_{\nu} \cap B_{\mu} \neq \emptyset$ and consequently $\mu=\nu$. This  complete the proof of the proposition.
\end{proof}

\begin{remark}
    Observe that in the $3$-dimensional partially hyperbolic Kan example, there is an ergodic m.m.e which has positive center Lyapunov exponent which support is the whole manifold $\mathbb{T}^3$. In this example, there are two other m.m.e's with negative center exponent supported on invariant $su-$torus. Clearly, the above proposition implies that the measure with positive center exponent can not be $u$-m.m.e.
\end{remark}

 
\begin{corollary}\label{corolario unici}
If $\Gamma^{u}(f)=\{M\}$ and  $\mu$ is a measure of maximal  $u$-entropy constructed as in Lemma \ref{L1} with $\lambda_c(\mu)<0$ then   $\mu$ is the unique measure of maximal  $u$-entropy.
\end{corollary}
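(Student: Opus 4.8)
The plan is to deduce the corollary from Proposition \ref{PROPneg}, the only extra ingredient being what the hypothesis $\Gamma^u(f)=\{M\}$ provides. First I would record the elementary consequence that $\Gamma^u(f)=\{M\}$ forces \emph{every} nonempty closed $f$-invariant $u$-saturated subset $X\subseteq M$ to equal $M$: by Zorn's lemma (as in the argument showing $\Gamma^u(f)\neq\emptyset$) the family of closed $f$-invariant $u$-saturated subsets of $X$ has a minimal element, which therefore lies in $\Gamma^u(f)$ and is contained in $X$; by hypothesis it equals $M$, whence $X=M$. Applying Lemma \ref{L1} with $K=M\in\Gamma^u(f)$ then produces the measure $\mu\in\mme^u(f)$ of the statement, with $\supp(\mu)=M$.

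Since we are given $\lambda_c(\mu)<0$, I may assume $\mu$ ergodic: otherwise, using that the unstable metric entropy and the center Lyapunov exponent are affine in the measure, the ergodic decomposition of $\mu$ has a positive-measure set of ergodic components that lie in $\mme^u(f)$ and have negative center exponent, and it is enough to run the argument for one such component. With $\mu$ ergodic, $\lambda_c(\mu)<0$ and one-dimensional center, the part of the proof of Proposition \ref{saturado} that uses only the ergodic $u$-mme property shows that $\supp(\mu)$ is $u$-saturated; being also closed and $f$-invariant, the first paragraph gives $\supp(\mu)=M$, so $K:=\supp(\mu)=M\in\Gamma^u(f)$. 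Now Proposition \ref{PROPneg} applies verbatim and yields $\supp(\nu)\cap M=\emptyset$ for every ergodic $u$-mme $\nu\neq\mu$, which is absurd since $\supp(\nu)\neq\emptyset$. Hence $\mu$ is the unique ergodic element of $\mme^u(f)$.

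A last appeal to affineness of the unstable entropy upgrades this to genuine uniqueness: an arbitrary $u$-mme has almost every ergodic component in $\mme^u(f)$, hence equal to $\mu$, so it coincides with $\mu$. I expect the only delicate point to be this ergodic-decomposition bookkeeping --- the affineness of $h^u_{(\cdot)}(f)$ and $\lambda_c(\cdot)$ and the fact that ergodic components of a $u$-mme are again $u$-mme --- together with the (easy but essential) observation that under $\Gamma^u(f)=\{M\}$ the trio ``closed, $f$-invariant, $u$-saturated'' already forces a nonempty set to be all of $M$; with these in hand the corollary is immediate from Propositions \ref{saturado} and \ref{PROPneg}.
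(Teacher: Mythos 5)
Your argument is correct and takes essentially the same route as the paper: both verify that $\supp(\mu)=M\in\Gamma^{u}(f)$ (automatic from Lemma~\ref{L1}) and then invoke Proposition~\ref{PROPneg} to rule out any second ergodic $u$-MME. The only departures are cosmetic and helpful: you observe that the conclusion $\supp(\nu)\cap M=\emptyset$ of Proposition~\ref{PROPneg} is already absurd because $\supp(\nu)\neq\emptyset$ (the paper first derives $\supp(\nu)=M$ via $u$-saturation, a step that is not actually needed), and you spell out the ergodic-decomposition bookkeeping (affineness of $h^{u}_{(\cdot)}(f)$ and $\lambda_c(\cdot)$, plus the observation that $\Gamma^{u}(f)=\{M\}$ forces every nonempty closed invariant $u$-saturated set to equal $M$ via Zorn) that the paper leaves implicit.
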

\begin{proof}
Let $\nu\in \mathbb{P}_{\erg}(f)$ be an $u$-m.m.e. Since $\supp(\nu)$ is an  $u$-saturated set  and  $\Gamma^{u}(f)=\{M\}$ then $\supp(\nu)=M$. So, $\supp(\mu)\cap \supp(\nu)=M$, The proposition  \ref{PROPneg} implies that $\mu=\nu$.
\end{proof}

\section{Proof of theorem \ref{teo2}}
 
 Since $M$ is the unique minimal $u$-saturated invariant set  we have that $f$ is topologically transitive. So, the  quotient dynamic  $f_c$ is topologically transitive. Then the Theorem \ref{existencia de u margulis} applies to $f$ and $f^{-1}$ and  let    $\{m^{u}_x\}_{x\in M}$, $\{m^{ s}_x\}_{x\in M}$ be the Margulis system  as in the  Theorem \ref{existencia de u margulis} for $f$ and $f^{-1}$.\\
From now on, let us assume that $M$, $E^{u}$, $E^{s}$, and $E^{c}$ are orientable, and that $f$ preserves the orientation of these bundles. We will show that these assumptions are not restrictive. Assume that Theorem \ref{teo2} holds under these conditions. Now, let $f: M \rightarrow M$ be a diffeomorphism satisfying the hypotheses of Theorem \ref{teo2}. By taking a finite covering $\pi: \overline{M}\rightarrow M$, we can obtain a diffeomorphism $\overline{f}: \overline{M} \rightarrow \overline{M}$ such that $\overline{M}$ and the corresponding invariant bundles are orientable, and $g=\overline{f}^2$ preserves their orientations.


\begin{claim}\label{minimalidad_levantamento}
$\overline{M}$ is the unique $u$-saturated minimal invariant set  for $\overline{f}$, i.e., $\Gamma^{u}(\overline{f})=\{\overline{M}\}$    
\end{claim}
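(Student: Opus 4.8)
The plan is to prove that every $\overline{K}\in\Gamma^{u}(\overline{f})$ coincides with $\overline{M}$; since $\Gamma^{u}(\overline{f})\neq\emptyset$ by Zorn's lemma, this gives $\Gamma^{u}(\overline{f})=\{\overline{M}\}$. The first step is to project $\overline{K}$ down. As $\pi$ is a local diffeomorphism conjugating $\overline{f}$ to $f$, it carries the strong unstable bundle of $\overline{f}$ onto that of $f$ and hence maps each unstable leaf of $\overline{f}$ onto an unstable leaf of $f$; therefore $\pi(\overline{K})$ is compact, $f$-invariant and $u$-saturated. Since $\Gamma^{u}(f)=\{M\}$, every nonempty closed $u$-saturated $f$-invariant set contains an element of $\Gamma^{u}(f)$, i.e. contains $M$; hence $\pi(\overline{K})=M$.

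The next step is a deck-group argument. We may assume $\pi$ is regular (it only serves to make the bundles orientable, for which $\mathbb{Z}/2$-covers suffice; alternatively one replaces $\pi$ by a finite regular cover of $M$ factoring through it, to which $\overline{f}$ still lifts). Let $G=\mathrm{Deck}(\pi)$, which is finite and acts freely and transitively on the fibres. From $\pi\circ g=\pi$ one gets $D\pi\circ Dg=D\pi$, so each $g\in G$ preserves the strong unstable bundle of $\overline{f}$ and thus permutes its unstable leaves; likewise $\pi\circ(\overline{f}g\overline{f}^{-1})=\pi$ shows that $\overline{f}$ normalizes $G$. Consequently $\overline{L}:=\bigcup_{g\in G}g(\overline{K})$ is closed, $u$-saturated, $\overline{f}$-invariant and $G$-invariant with $\pi(\overline{L})=M$; since $G$ is transitive on the fibres, a $G$-invariant set whose image is all of $M$ must be all of $\overline{M}$, so $\overline{M}=\bigcup_{g\in G}g(\overline{K})$ is a finite union of homeomorphic copies of $\overline{K}$.

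Finally I would analyze the topological boundary $\partial\overline{K}=\overline{K}\cap cl(\overline{M}\setminus\overline{K})$. Because $\overline{K}$ is closed and $u$-saturated, $\overline{M}\setminus\overline{K}$ is open and $u$-saturated (a leaf meeting it cannot meet $\overline{K}$), and continuity of the unstable foliation of $\overline{f}$ implies that the closure of a $u$-saturated set is again $u$-saturated; hence $cl(\overline{M}\setminus\overline{K})$, and therefore $\partial\overline{K}$, is closed, $u$-saturated and $\overline{f}$-invariant. Minimality of $\overline{K}$ then forces $\partial\overline{K}=\emptyset$ or $\partial\overline{K}=\overline{K}$. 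In the first case $\overline{K}$ is open and closed, so $\overline{K}=\overline{M}$ by connectedness of $\overline{M}$. In the second case $\mathrm{int}(\overline{K})=\emptyset$, so each $g(\overline{K})$ is a nowhere dense closed subset and the identity $\overline{M}=\bigcup_{g\in G}g(\overline{K})$ writes the Baire space $\overline{M}$ as a finite union of nowhere dense sets, which is impossible. In either case $\overline{K}=\overline{M}$, which proves the claim.

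I expect the middle step to be the main obstacle: without exhibiting $\overline{M}$ as a finite union of translates of $\overline{K}$ one cannot exclude a minimal $\overline{K}$ with $\pi(\overline{K})=M$ that is a thin, nowhere dense partial cross-section of the covering. Regularity of $\pi$ — so that finitely many $G$-translates of $\overline{K}$ already cover $\overline{M}$ — is precisely what makes the Baire-category contradiction available.
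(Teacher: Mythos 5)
Your proof is correct and takes a genuinely different route from the one in the paper. The paper argues openness of $\Lambda\in\Gamma^{u}(\overline{f})$ directly: for $\widetilde{x}\in\Lambda$ it picks a small $U$ on which $\pi$ is a homeomorphism, writes $\Lambda=cl(\mathcal{F}^{u}(\mathcal{O}(\widetilde{x})))$, and claims $\mathcal{F}^{u}(\mathcal{O}(\widetilde{x}))\cap U=(\pi|_{U})^{-1}\bigl(\mathcal{F}^{u}(\mathcal{O}(x))\cap\pi(U)\bigr)$, so that density downstairs lifts to density in $U$ and $U\subset\Lambda$; then $\Lambda$ is clopen and equals $\overline{M}$ by connectedness. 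That pullback identity is exactly where the multi-sheeted nature of $\pi$ enters: the unique preimage in $U$ of a point of $\mathcal{F}^{u}(\mathcal{O}(x))$ need not lie on $\mathcal{F}^{u}(\mathcal{O}(\widetilde{x}))$ itself but on one of its deck translates, so as written the paper only yields $U\subset\bigcup_{g\in G}g(\Lambda)$ rather than $U\subset\Lambda$. Your argument confronts this head on: you assemble $\overline{M}=\bigcup_{g\in G}g(\overline{K})$ via the deck group, observe $\partial\overline{K}$ is closed, $u$-saturated and invariant so that minimality forces $\partial\overline{K}=\emptyset$ or $\partial\overline{K}=\overline{K}$, and in the latter case a Baire-category contradiction appears because a finite union of nowhere-dense translates cannot fill $\overline{M}$; the former case gives $\overline{K}$ clopen, hence $\overline{K}=\overline{M}$ by connectedness (the same final step the paper uses). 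What your route buys is precisely the control over which covering sheet a lifted point falls on — the deck-group decomposition plus Baire replaces the paper's single-sheet density claim — at the modest cost of needing $\pi$ to be regular, which, as you note, is automatic for the orientation cover since it corresponds to a characteristic subgroup of $\pi_{1}(M)$. One could also streamline your last step: once $\overline{M}=\bigcup_{g}g(\overline{K})$, Baire already forces $\mathrm{int}(\overline{K})\neq\emptyset$, so $\partial\overline{K}$ is a proper closed invariant $u$-saturated subset of $\overline{K}$, hence empty by minimality, and $\overline{K}$ is clopen; this collapses the two cases into one.
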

\begin{proof}
Let $\Lambda \in \Gamma^{u}(\overline{f})$, we prove that $\Lambda$ is an open subset of $\overline{M}$. Since $\Lambda$ is a $u$-saturated minimal invariant set, we have $cl(\mathcal{F}^{u}(\mathcal{O}(z)))=\Lambda$ for every $z\in \Lambda$, and $\pi(\Lambda)= M$. Let $\widetilde{x}\in \Lambda$ and $U\subset\overline{M}$ be an open set containing $\widetilde{x}$, such that $\pi|_{U}:U\rightarrow \pi(U)$ is a homeomorphism. Let's prove that $U\subset \Lambda$. In fact, let $x=\pi(\widetilde{x})$, as $\pi(\mathcal{F}^{u}(\mathcal{O}(\widetilde{x})))=\mathcal{F}^{u}(\mathcal{O}(x))$ then 
\[
\pi(\mathcal{F}^{u}(\mathcal{O}(\widetilde{x}))\cap U)=\mathcal{F}^{u}(\mathcal{O}(x))\cap \pi(U)
\]
Since $cl(\mathcal{F}^{u}(\mathcal{O}(x)))=M$ (because $\Gamma^{u}(f)=\{M\}$) we have that $\mathcal{F}^{u}(\mathcal{O}(x))\cap \pi(U)$ is dense in $\pi(U)$ and, consequently $\mathcal{F}^{u}(\mathcal{O}(\widetilde{x}))\cap U=(\pi|_{U})^{-1}(\mathcal{F}^{u}(\mathcal{O}(x))\cap \pi(U))$ is dense in $U$. So,
\begin{eqnarray*}
U &=& cl(\mathcal{F}^{u}(\mathcal{O}(\widetilde{x}))\cap U) \\
&\subset& cl(\mathcal{F}^{u}(\mathcal{O}(\widetilde{x}))) \\
&=& \Lambda.
\end{eqnarray*}

As $\widetilde{x}\in\Lambda$ is an arbitrary point, it follows that $\Lambda$ is an open set. As $\Lambda$ is both open and closed, and $\Lambda\subset \overline{M}$, we conclude that $\Lambda=\overline{M}$. Hence, $\overline{M}$ is indeed the unique invariant $u$-saturated set.
\end{proof}

\begin{claim}\label{minimalidad_g}
 Let    $g=\overline{f}^{2}$ then  $\overline{M}$ is the unique $u$-saturated minimal invariant set  for $\overline{g}$.
\end{claim}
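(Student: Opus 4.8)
The plan is to reduce the statement for $g = \overline{f}^2$ to the already-established Claim \ref{minimalidad_levantamento} for $\overline{f}$, using the general principle that passing to a power $f^n$ of a transitive map need not preserve transitivity, but for a map whose \emph{only} minimal invariant $u$-saturated set is the whole space, the obstruction can be controlled. First I would observe that $\mathcal{F}^u$ is the same foliation for $\overline{f}$ and for $g = \overline{f}^2$ (the strong unstable bundle of $\overline{f}^2$ coincides with that of $\overline{f}$), so a set is $u$-saturated for $g$ if and only if it is $u$-saturated for $\overline{f}$; only the notion of invariance changes, since $g$-invariance is weaker than $\overline{f}$-invariance. Hence $\Gamma^u(g) \supseteq$ nothing automatically, but every $\overline{f}$-invariant $u$-saturated closed set is $g$-invariant.

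Next I would take $\Lambda \in \Gamma^u(g)$ and consider $\Lambda \cup \overline{f}(\Lambda)$. This set is closed, $u$-saturated (because $\overline{f}$ maps $\mathcal{F}^u$-leaves to $\mathcal{F}^u$-leaves), and $\overline{f}$-invariant: indeed $\overline{f}(\Lambda \cup \overline{f}(\Lambda)) = \overline{f}(\Lambda) \cup \overline{f}^2(\Lambda) = \overline{f}(\Lambda) \cup \Lambda$ since $g(\Lambda) = \Lambda$. By Claim \ref{minimalidad_levantamento} we get $\Lambda \cup \overline{f}(\Lambda) = \overline{M}$. Now I would run the same open-set argument as in the proof of Claim \ref{minimalidad_levantamento}: a minimal $g$-invariant $u$-saturated set satisfies $\mathrm{cl}(\mathcal{F}^u(\mathcal{O}_g(z))) = \Lambda$ for every $z \in \Lambda$, where $\mathcal{O}_g$ denotes the $g$-orbit. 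Since $\mathcal{F}^u(\mathcal{O}_g(z)) \subseteq \mathcal{F}^u(\mathcal{O}_{\overline{f}}(z))$ and $\pi \circ \overline{f} = f^2 \circ \pi$ (recall $\pi:\overline{M}\to M$ is the covering and we are comparing with $f$ downstairs), the projection $\pi(\Lambda)$ is a closed $u$-saturated $f^2$-invariant subset of $M$; but $M$ is the unique such set for $f$, hence — after the same $\overline{f}(\Lambda)$-union trick downstairs if needed — $\pi(\Lambda) = M$. Then, exactly as before, using a distinguished neighbourhood $U$ on which $\pi$ is a homeomorphism and the fact that $\mathcal{F}^u(\mathcal{O}_g(\widetilde{x})) \cap U$ projects to a dense subset of $\pi(U)$, one deduces $U \subseteq \Lambda$, so $\Lambda$ is open; being also closed and nonempty in the connected $\overline{M}$, we get $\Lambda = \overline{M}$.

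The main obstacle I anticipate is the density step: I must check that $\mathcal{F}^u(\mathcal{O}_g(x))$ (the $g$-orbit, not the full $\overline{f}$-orbit) is still dense in $M$ after projection, since a priori transitivity can be lost under squaring. This is where the union $\Lambda \cup \overline{f}(\Lambda) = \overline{M}$ is essential: it forces at least one of $\mathcal{F}^u(\mathcal{O}_g(\widetilde{x}))$ or $\overline{f}(\mathcal{F}^u(\mathcal{O}_g(\widetilde{x})))$ to be dense in the distinguished neighbourhood $U$, and since $\overline{f}$ is a homeomorphism and $\Lambda$ is already $g$-invariant, density of the image implies density of the preimage set up to applying $\overline{f}^{-1}$, which stays inside $\Lambda$. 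Once this is arranged, the rest is a verbatim repetition of the argument in Claim \ref{minimalidad_levantamento}, so I would phrase the proof to invoke that argument rather than rewrite it.
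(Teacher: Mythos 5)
Your first step is exactly the paper's: for $\Lambda \in \Gamma^u(g)$, the set $\Lambda \cup \overline{f}(\Lambda)$ is closed, $u$-saturated, and $\overline{f}$-invariant (since $\overline{f}(\Lambda\cup\overline{f}(\Lambda)) = \overline{f}(\Lambda) \cup g(\Lambda) = \overline{f}(\Lambda)\cup\Lambda$), so Claim~\ref{minimalidad_levantamento} gives $\Lambda \cup \overline{f}(\Lambda) = \overline{M}$. After that, though, you take a detour that does not close, and you miss the one-line argument that finishes the proof.

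The paper does not re-run the density argument at all. It simply considers $C := \Lambda \cap \overline{f}(\Lambda)$. Since $\Lambda$ and $\overline{f}(\Lambda)$ are both closed and their union is $\overline{M}$, connectedness of $\overline{M}$ forces $C \neq \emptyset$; moreover $\overline{f}(C) = \overline{f}(\Lambda) \cap g(\Lambda) = \overline{f}(\Lambda) \cap \Lambda = C$, and $C$ is $u$-saturated and compact. Hence $C$ contains a minimal $\overline{f}$-invariant $u$-saturated set, which by Claim~\ref{minimalidad_levantamento} must be $\overline{M}$; so $C = \overline{M}$ and therefore $\Lambda = \overline{M}$. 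This is the step your plan is missing.

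Your density-based replacement has concrete gaps. First, to conclude that $\mathcal{F}^u(\mathcal{O}_g(\widetilde{x}))$ is dense in a distinguished neighbourhood $U$, you would need to know that $\mathcal{F}^u(\mathcal{O}_{f^2}(x))$ is dense in $M$ for $x = \pi(\widetilde{x})$, i.e. you would need $\Gamma^u(f^2) = \{M\}$ downstairs — but that is exactly the same kind of statement you are trying to prove upstairs, and the ``union trick downstairs if needed'' you invoke does not give it: $\pi(\Lambda) \cup f(\pi(\Lambda)) = M$ does not imply $\pi(\Lambda) = M$ without the intersection step. Second, the sentence resolving your ``main obstacle'' does not hold: $\Lambda \cup \overline{f}(\Lambda) = \overline{M}$ does \emph{not} force either $\mathcal{F}^u(\mathcal{O}_g(\widetilde{x}))$ or its $\overline{f}$-image to be dense in a given $U$ (both closures can cover $U$ jointly without either being dense), and even if $\overline{f}(\mathcal{F}^u(\mathcal{O}_g(\widetilde{x})))$ were dense in $U$, applying $\overline{f}^{-1}$ yields density in $\overline{f}^{-1}(U)$, which puts $\overline{f}^{-1}(U)$ inside $\Lambda$, not $U$. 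So this route does not establish $U \subseteq \Lambda$. Finally, a small slip: the lift satisfies $\pi \circ \overline{f} = f \circ \pi$ (and hence $\pi \circ g = f^2 \circ \pi$), not $\pi \circ \overline{f} = f^2 \circ \pi$ as written.
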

\begin{proof}
Suppose that $g=\overline{f}^{2}$ has a $g$-invariant and $u$-saturated minimal set $K\neq \overline{M}$. Hence, $K=cl(\mathcal{F}^{u}(\mathcal{O}_{g}(z)))$ for all $z\in K$. Since $K\cup \overline{f}(K)$ is a compact $\overline{f}$-invariant $u$-saturated set, we have $K\cup \overline{f}(K)=\overline{M}$. Thus, $C=K\cap \overline{f}(K)\neq \emptyset$ is a compact $\overline{f}$-invariant $u$-saturated set, and therefore $C=\overline{M}$. This implies that $K=\overline{M}$, and thus $\overline{M}$ is the unique $g$-invariant $u$-saturated minimal set. 
\end{proof}

Analogously to subsection 3.2 from \cite{hertz2012maximizing}, by Claims \ref{minimalidad_levantamento} and \ref{minimalidad_g} one can verify that the dichotomy of Theorem \ref{teo2} for $g=\overline{f}^{2}$ implies the dichotomy for $f$.\\

Now, suppose that there is an ergodic  measure of maximal entropy $\mu$ with $\lambda_c(\mu)=0$. Since $M$ is the unique  $u$-saturated minimal invariant set, then $\supp(\mu)=M$. By Proposition \ref{symmetricentropy}, $\mme(f)=\mme^{u}(f)\cup \mme^{u}(f^{-1})$, therefore  Proposition \ref{PROPneg} implies that there is no hyperbolic ergodic  measure of maximal entropy. Therefore, hyperbolic and non-hyperbolic ergodic measures of maximal entropy do not coexist, that is, only one of the following occurs:
\begin{itemize}
     \item $f$ admits an ergodic measure of maximal entropy with zero center exponent.
     \item $f$ admits no ergodic measure of maximal entropy with vanishing center exponent.
\end{itemize}

We will divide the proof of this theorem into two parts. In the first part, assuming the existence of an ergodic measure of maximal entropy  $\mu$ as $\lambda_c(\mu)=0$, we prove that it is unique. In the second part, assuming the non-existence of non-hyperbolic measures of maximal entropy, we show that there are exactly two ergodic measures of maximal entropy $\mu^{+}, \; \mu^{-}$, with a positive and negative central exponent, respectively.\\

\noindent\textit{\textbf{Part 1: Uniqueness of the non-hyperbolic maximal entropy measure.}}\\

To prove uniqueness, we apply the Avila-Viana invariance principle, Theorem \ref{arturviana} to conclude that $\mu$ admits $s$ and $u-$invariant disintegration along the central foliation $\{\mu^{c}_{x^*}\}_{x^* \in M_c}$ and $x^{*} \rightarrow \mu_{x^{*}}$ varies continuously with $x^{* }$ in $\supp\left(\pi_{*}(\mu)\right)=M_c$.

The following proposition is a consequence of uniqueness of Rokhlin's  disintegration theorem and invariance principle.

\begin{proposition}\label{PropoJoas}
Let $f\in \phc^{2}_{c=1}(M)$ and $\mu$ be an ergodic measure of maximal entropy with zero central exponent. Suppose that $f_c$ is topologically transitive and $\pi_{*} \mu$ has local product structure. If $\{\mu^{c}_{x^*}:x^*\in M_{c}\}$ is the disintegration of $\mu$ along $\mathcal{ F}^{c}$ then
\begin{itemize}
     \item[(1)] $ (f_c)_{*} \mu_{x^{*}}^{c}=\mu_{f_{c}(x^{*})}^{c}, \text{ for all } x^{*}\in M_{c}.$
     \item[(2)] $ \supp(\mu^{c}_{x^{*}}) \neq \emptyset , \text{ for all } x^{*}\in M_{c}.$
     \item[(3)] If $\Gamma^{u}=\{M\}$ then $\mu^{c}_{x^{*}}$ has no atoms and $\supp(\mu^ {c}_{x^{*}})=\mathcal{F}^{c}(x)$ for all $x\in M$, $x^{*}=\pi(x)$ .
\end{itemize}
\end{proposition}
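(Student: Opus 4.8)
## Proof strategy for Proposition \ref{PropoJoas}

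The plan is to exploit the interplay between the two disintegrations of $\mu$: the disintegration $\{\mu^c_{x^*}\}$ along the center foliation, and the disintegration $\{\mu^u_x\}$ along the unstable foliation, which by Theorem \ref{existencia de u margulis} must be equivalent to the Margulis family $\{m^u_x\}$ since $\lambda_c(\mu)=0$ forces $\mu$ to be $u$-mme. I would carry out the three items in order, with item (3) being the heart of the matter.

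\textbf{Item (1).} This is the equivariance of the conditional measures along the center foliation under the quotient dynamics. Since $f(\mathcal{F}^c(x)) = \mathcal{F}^c(f(x))$, the pushforward family $\{(f_c)_* \mu^c_{x^*}\}$ is a disintegration of $f_*\mu = \mu$ along $\mathcal{F}^c$; by the essential uniqueness in Rokhlin's theorem, $(f_c)_*\mu^c_{x^*} = \mu^c_{f_c(x^*)}$ for $\pi_*\mu$-a.e.\ $x^*$. The invariance principle (Theorem \ref{arturviana}) gives us a version of the disintegration that is continuous on $\supp(\pi_*\mu) = M_c$, and since $f_c$ is a homeomorphism of $M_c$ both sides of the identity are continuous in $x^*$; hence the a.e.\ identity upgrades to an everywhere identity.

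\textbf{Item (2).} Continuity of $x^* \mapsto \mu^c_{x^*}$ on the compact set $M_c$ together with $\mu^c_{x^*}$ being a probability measure for a.e.\ $x^*$ means the support is nonempty on a dense set, and then an elementary semicontinuity argument (the support cannot suddenly collapse to $\emptyset$ under weak-$*$ limits of probability measures) gives $\supp(\mu^c_{x^*}) \neq \emptyset$ for every $x^*$.

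\textbf{Item (3).} This is where I expect the main difficulty. The strategy is to transfer saturation/genericity information from the unstable direction to the center direction using the $s$- and $u$-invariance of $\{\mu^c_{x^*}\}$ supplied by the invariance principle. First, using $\Gamma^u(f) = \{M\}$ and Proposition \ref{saturado}, $\supp(\mu) = M$; combined with the $u$- and $s$-holonomy invariance of the center disintegration and the fact that the whole manifold is the unique $u$-minimal set, one propagates positivity of $\mu^c_{x^*}$ on open subsets of center leaves along unstable and stable holonomies and along orbits, so that $\supp(\mu^c_{x^*})$, being a closed, holonomy-invariant, $f_c$-equivariant assignment whose $\pi$-preimage is saturated, must be all of $\mathcal{F}^c(x)$ — here is where $\Gamma^u = \{M\}$ is used decisively, mirroring the argument in Step 3 of the proof of Theorem \ref{existencia de u margulis}. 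For the atomless claim, suppose $\mu^c_{x^*}$ has an atom at some $p \in \mathcal{F}^c(x^*)$; by equivariance (item (1)) the atom masses along the orbit are governed by the dynamics, and by $s/u$-holonomy invariance the atom would be reproduced at infinitely many points on center leaves meeting a single stable or unstable leaf — but $\mathcal{F}^c$ is uniformly compact, so a center leaf carries finite ($\le 1$) total mass, and one derives that the atom masses cannot be consistently distributed unless they vanish; alternatively, an atom of maximal mass would have to be permuted by $f_c$, forcing a periodic center leaf carrying an atomic invariant measure, which contradicts $\supp(\mu)=M$ and the positivity just established. The delicate point throughout is verifying that the $s$- and $u$-holonomies of the center foliation really are the global homeomorphisms required to apply Theorem \ref{arturviana} and to run these propagation arguments — this uses the orientability reductions made just before the proposition and the structure of center-stable/center-unstable leaves in the skew-product setting.
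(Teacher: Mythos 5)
Your handling of items (1) and (2) matches the paper in spirit: item (1) is exactly the Rokhlin uniqueness plus continuity argument, and for item (2) the paper instead propagates nontriviality along a dense $f_c$-orbit using item (1), but your direct ``probability a.e.\ plus continuity'' argument also closes the gap, provided one notes that weak-$*$ limits of probabilities on a continuously varying compact fiber remain probabilities. For the full-support part of item (3) you take a genuinely different but valid route from the paper: you observe that $S:=\{x\in M: x\in\supp(\mu^c_{\pi(x)})\}$ is closed (this requires composing local $u$- and $s$-holonomies and invoking the invariance principle, which is worth spelling out), $f$-invariant, and $u$-saturated, hence contains an element of $\Gamma^u(f)=\{M\}$, so $S=M$. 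The paper instead directly exhibits, for every $y$ and $\epsilon$, a point of $\supp(\mu^c_{\pi(y)})$ inside $\mathcal{F}^c(y,\epsilon)$ by projecting a point of the dense set $\mathcal{F}^u(\mathcal{O}_f(p))\cap D^{cs}(y,\epsilon)$ along local stable manifolds.

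The atomless part is where I see a real gap. You claim that an atom ``would be reproduced at infinitely many points on center leaves meeting a single stable or unstable leaf,'' but in the skew-product setting a single strong unstable leaf intersects any given center fiber at most once (it projects to a simply connected unstable leaf of the base Anosov map), so this produces one atom per center leaf, not infinitely many atoms on one center leaf, and there is no contradiction with finite total mass. Your alternative (``an atom of maximal mass would have to be permuted by $f_c$, forcing a periodic center leaf'') is not justified either: equivariance preserves the set of maximal-mass atoms, but there is no reason this set concentrates on a periodic leaf. What actually works — and is what the paper does — is to use $\Gamma^u(f)=\{M\}$ to make $\mathcal{F}^u(\mathcal{O}_f(p))$ dense in $M$, intersect it with a transversal disk $D^{cs}(y,\epsilon)$ to get infinitely many points, and then push these by the local stable holonomy $\pi^s_{\mathrm{loc}}$ onto the single arc $\mathcal{F}^c(y,\epsilon)$; by the $u$- and $s$-invariance of the disintegration each image point is an atom of the same mass $\alpha>0$ of the one conditional measure $\mu^c_{y^*}$, contradicting that it is a probability. (A variant in the spirit of your full-support argument would also close this: the set $A_\alpha:=\{x\in M:\mu^c_{\pi(x)}(\{x\})\ge\alpha\}$ is closed, $f$-invariant, and $su$-saturated, hence $A_\alpha=M$ by minimality whenever $\alpha$ is an atom mass, which is absurd since a center circle can carry only finitely many atoms of mass $\ge\alpha$ — but you would need to argue the closedness carefully, and this is not what your sketch says.)
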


\begin{proof} 
\textit{Item (1).} By the $f$-invariance of $\mu$ and the uniqueness of the disintegration we have that $(f_c)_{*} \mu_{x^{*}}^{c}=\mu_{f_{c}(x^ {*})}^{c}$ for $\pi_{*}\mu$-almost every point. Since $x^{*} \rightarrow \mu_{x^{*}}^{c}$ is continuous at $\supp\left(\pi_{*} \mu\right)=M_c$, we get that $ ( f_c)_{*} \mu_{x^{*}}^{c}=\mu_{f_{c}(x^{*})}^{c}$ for all $x^{*}\in M_c$.\\

\noindent\textit{Item (2).} Take $y^{*}\in M_{c}$ such that $\supp(\mu^{c}_{y^*})\neq \emptyset$. By continuing of disintegration $\{\mu^{c}_{x^*}:x^*\in M_{c}\}$, there exists an open $U\subset M_c$ such that $\supp(\mu ^{c}_{x^*})\neq \emptyset$ for all $x^{*}\in U$. Since $f_c$ is topologically transitive, there exists a point $p^{*}\in U$ such that $\supp(\mu^{c}_{p^*})\neq \emptyset$ and $cl(\mathcal{O}(p))=M_c$. Since $(f_c)_{*} \mu_{p^{*}}^{c}=\mu_{f_{c}(p^{*})}^{c}$ and $\mu^{c }_{p*}(\mathcal{F}^{c}(\pi^{-1}(p^{*})))=1$ we have $\mathcal{O}(p^ {*})\subset\{x^{*}\in M_c: \mu^{c}_{x*}(\mathcal{F}^{c}(\pi^{-1}( x^{*})))=1\}$. By continuing the disintegration and density of $\mathcal{O}(p^{*})$ we must have that $\mu^{c}_{x^{*}}(\mathcal{F}^{c}( x^{*}))=1,$ for all $x^{*}\in M_c$. This concludes the proof of item (2).\\

  \noindent\textit{Item (3).} Let $x^{*}\in M_c$, by item (2), $\mu^{c}_{x^*}$ is non-trivial. Let $x\in M$ such that $\pi_c(x)=x^{*}$, take  $p\in \supp(\mu^{c}_{x^*})$. For any $y^{*}\in M_c, y\in \pi^{-1}_{c}(y^{*})$ and $\epsilon>0$, we  show that $\mu^{c}_{y^{*}}(\mathcal{F}^{c}(y,\epsilon))>0$. For this, consider the disk $D^{cs}(y,\epsilon)$, defined as follows $$D^{cs}(y,\epsilon):=\bigcup_{z\in \mathcal{F} ^{ c}(y,\epsilon)}\mathcal{F}^{s}_{loc}(z).$$

Let $\pi^{s}_{loc}:D^{cs}(y,\epsilon) \rightarrow \mathcal{F}^{ c}(y,\epsilon)$ be the projection along the local stable manifolds.
Since $D^{cs}(y,\epsilon)$ is a disk transversal to $\mathcal{F}^{u}$ and $\mathcal{F}^{u}(\mathcal{O}_{f }(p))$ is dense in $M$, we have
that $ D^{cs}( y,\epsilon) \cap \mathcal{F}^{u}(\mathcal{O}_{f}(p))$ is dense in $D^{cs}( y ,\epsilon)$. So $$\#\Big(\pi^{s}_{loc}\big(D^{cs}(y,\epsilon)\cap \mathcal{F}^{u}(\mathcal{O}_f (p))\big)\Big)=\infty.$$

Let $z\in \mathcal{F}^{u}(\mathcal{O}_{f}(p))\cap D^{cs}( y,\epsilon)$, then there is $n_z\in \mathbb{Z}$, such that $z\in \mathcal{F}^{u}(f^{n_z}p)\cap D^{cs}( y,\epsilon)\neq \emptyset$, and $ \pi_{loc}^{s}(z)\in \mathcal{F}^{c}(y,\epsilon)$.
\begin{figure}[H]

\centering

\tikzset{every picture/.style={line width=0.7pt}} 

\begin{tikzpicture}[x=0.6pt,y=0.6pt,yscale=-1,xscale=1]

\draw  [color={rgb, 255:red, 74; green, 74; blue, 74 }  ,draw opacity=1 ] (304.32,115) .. controls (292.21,73.03) and (300.58,39) .. (323.02,39) .. controls (345.45,39) and (373.45,73.03) .. (385.55,115) .. controls (397.66,156.97) and (389.29,191) .. (366.86,191) .. controls (344.43,191) and (316.43,156.97) .. (304.32,115) -- cycle ;
\draw  [color={rgb, 255:red, 74; green, 74; blue, 74 }  ,draw opacity=1 ] (70.32,100) .. controls (58.21,58.03) and (66.58,24) .. (89.02,24) .. controls (111.45,24) and (139.45,58.03) .. (151.55,100) .. controls (163.66,141.97) and (155.29,176) .. (132.86,176) .. controls (110.43,176) and (82.43,141.97) .. (70.32,100) -- cycle ;
\draw [color={rgb, 255:red, 208; green, 2; blue, 27 }  ,draw opacity=1 ]   (1.4,99.2) .. controls (66.4,84.2) and (86.4,125.2) .. (153.4,112.2) .. controls (220.4,99.2) and (222.51,145.2) .. (265.46,131.2) ;
\draw [color={rgb, 255:red, 9; green, 79; blue, 161 }  ,draw opacity=1 ][line width=1.5]    (300.4,95.2) .. controls (303.8,106.4) and (306.4,126.2) .. (313.4,139.2) ;
\draw [color={rgb, 255:red, 74; green, 144; blue, 226 }  ,draw opacity=1 ]   (261.46,123.2) .. controls (295.53,93.2) and (315.5,103.2) .. (323.14,86.2) ;
\draw [color={rgb, 255:red, 3; green, 43; blue, 88 }  ,draw opacity=1 ]   (258.68,118.2) .. controls (292.75,88.2) and (307.51,102.2) .. (315.15,85.2) .. controls (322.79,68.2) and (333.69,113.2) .. (345.88,119.2) .. controls (340.84,134.2) and (308.85,129.2) .. (289.4,152.2) .. controls (289.45,132.2) and (264.61,148.2) .. (258.68,118.2) -- cycle ;
\draw [color={rgb, 255:red, 74; green, 144; blue, 226 }  ,draw opacity=1 ]   (270.4,135.2) .. controls (281.4,112.2) and (342.4,107.2) .. (330.32,94.2) ;
\draw [color={rgb, 255:red, 74; green, 144; blue, 226 }  ,draw opacity=1 ]   (287.4,146.2) .. controls (321.47,116.2) and (332.75,135) .. (340.39,118) ;
\draw [color={rgb, 255:red, 208; green, 2; blue, 27 }  ,draw opacity=1 ]   (320.4,116.2) .. controls (379.4,97.2) and (394.4,114.2) .. (433.4,95.2) ;
\draw [color={rgb, 255:red, 74; green, 144; blue, 226 }  ,draw opacity=1 ]   (283.4,141.2) .. controls (289.4,126.2) and (308.4,127.2) .. (336.4,107.2) ;
\draw    (311.4,158.2) .. controls (313.35,135.77) and (298.19,149.47) .. (305.4,127.74) ;
\draw [shift={(306,126)}, rotate = 109.56] [color={rgb, 255:red, 0; green, 0; blue, 0 }  ][line width=0.75]    (6.56,-1.97) .. controls (4.17,-0.84) and (1.99,-0.18) .. (0,0) .. controls (1.99,0.18) and (4.17,0.84) .. (6.56,1.97)   ;
\draw    (338.85,82.11) .. controls (324.85,99.73) and (345.05,96.68) .. (327.05,110.82) ;
\draw [shift={(325.58,111.94)}, rotate = 323.05] [color={rgb, 255:red, 0; green, 0; blue, 0 }  ][line width=0.75]    (6.56,-1.97) .. controls (4.17,-0.84) and (1.99,-0.18) .. (0,0) .. controls (1.99,0.18) and (4.17,0.84) .. (6.56,1.97)   ;

\draw (65,95) node [anchor=north west][inner sep=0.75pt]  [font=\Huge,color={rgb, 255:red, 138; green, 6; blue, 22 }  ,opacity=1 ]  {$\cdot $};
\draw (74,85.4) node [anchor=north west][inner sep=0.75pt]  [font=\footnotesize]  {$f^{n_{z}}( p)$};
\draw (124,47) node [anchor=north west][inner sep=0.75pt]  [font=\Huge]  {$\cdot $};
\draw (319,104) node [anchor=north west][inner sep=0.75pt]  [font=\huge,rotate=-15.36]  {$\cdot $};
\draw (304,98) node [anchor=north west][inner sep=0.75pt]    {$y$};
\draw (130,35.4) node [anchor=north west][inner sep=0.75pt]  [font=\footnotesize]  {$f^{n_{z}}( x)$};
\draw (300,115) node [anchor=north west][inner sep=0.75pt]  [font=\Huge]  {$\cdot $};
\draw (228,83) node [anchor=north west][inner sep=0.75pt]  [font=\footnotesize]  {$D^{cs}( y,\epsilon )$};
\draw (301.43,103) node [anchor=north west][inner sep=0.75pt]  [font=\huge,rotate=-15.36]  {$\cdot $};
\draw (302,158) node [anchor=north west][inner sep=0.75pt]  [font=\small]  {$\pi^{s}_{loc}(z)$};
\draw (340,67.4) node [anchor=north west][inner sep=0.75pt]  [font=\small]  {$z$};
\draw (312,20.4) node [anchor=north west][inner sep=0.75pt]  [font=\small]  {$\mathcal{F}^{c}(y)$};
\draw (53,4.4) node [anchor=north west][inner sep=0.75pt]  [font=\small]  {$\mathcal{F}^{c}\left( f^{n_{z}}( x)\right)$};
\draw (439,81.4) node [anchor=north west][inner sep=0.75pt]  [font=\small,color={rgb, 255:red, 143; green, 7; blue, 24 }  ,opacity=1 ]  {$\mathcal{F}^{u}\left( f^{n_{z}}( p)\right)$};
\end{tikzpicture}
\caption{Support of $\mu^{c}_{x^*}$ }
\end{figure}

Since $p\in \supp(\mu_{x^{*}}^{c})$, Item (1) implies that $f^{n_z}(p)\in \supp(\mu_{f^ {n_z}_c(x^*)} ^{c})$. Since $\pi^{s}_{loc}(z)\in AC(f^{n_z}(p))$ (accessibility class of $f^{n_z}(p)$) and the disintegration $[x^{*}\rightarrow \mu^{c}_{x ^{*}}]$ is $s$, $u$-invariant, we have $\pi^{s}_{loc}(z) \in \supp(\mu^{c}_{y^{ *}})$. In particular, if $p$ is an atom then $\mu(\pi^{s}_{loc}(z))=\mu_{y^*}(\{p\})=\alpha>0$. As $\#\big(\pi^{s}_{loc}(D^{cs}(y,\epsilon)\cap \mathcal{F}^{u}(\mathcal{O}_f(p) \big)=\infty$, then
\begin{eqnarray*}
    1=\mu^{c}_{y^{*}}(\mathcal{F}^{c}(y))&\geq& \mu^{c}_{y^{*}}\big( \pi^{s}_{loc}(D^{cs}(y,\epsilon)\cap \mathcal{F}^{u}(\mathcal{O}_f(p)\big)\\
    &=& \#\big(\pi^{s}_{loc}(D^{cs}(y,\epsilon)\cap \mathcal{F}^{u}(\mathcal{O}_f(p))\big). \alpha=\infty.
\end{eqnarray*}
This contradiction implies that $\mu^{c}_{x^{*}}$ cannot be atomic. Finally, as $z\in \mathcal{F}^{c}(y,\epsilon)$ and $\pi^{s}_{loc}(z)\in \supp(\mu^{c}_ {y^{*}})$ we have to
\begin{eqnarray}\label{weight}
\mu^{c}_{y^{*}}(\mathcal{F}^{c}(y,\epsilon))>0.
\end{eqnarray}
  Since $\epsilon>0$ is arbitrary, (\ref{weight}) implies that $y\in \supp(\mu^{c}_{y^*})$ for all $y\in \pi^{ -1}_c(y^{*})$, that is, the measure $\mu^{c}_{y^{*}}$ is fully supported. This completes the proof of the proposition.
\end{proof}

\begin{remark}
In \cite{hertz2012maximizing} and \cite{ures2021maximal}, the main fact to prove uniqueness is that every conditional measure $\mu^{c}_x$ of a measure maximal entropy $\mu$ with $\lambda_c(\mu)=0$ has full support in $\mathcal{F}^{c}(x)$ and has no atoms, and for that, the authors  used the accessibility hypothesis. However, in Proposition \eqref{PropoJoas} we obtained the same, using the hypothesis $\Gamma^{u}(f)=M$ instead of accessibility.
\end{remark}

\noindent\textit{Proof of uniqueness.} From Proposition \ref{PropoJoas} using similar method as in \cite{avila2010extremal}, we obtain an $S^{1}$-action on $M$ that commutes with $f$ , that is, $\rho_{\theta}:M\rightarrow M$ such that $\rho_{\theta}\circ f = f\circ \rho_{\theta}, \theta \in S^1$. To do this, we define $\rho_{\theta}$ as follows: $\rho_{\theta}(x) = y$, where $y$ is the point in $\mathcal{F}^{c}(x) $ such that the center stable leaf arc that joins $x$ with $y$ (using the positive orientation) has conditional measure $\mu^{c}_x([x, y]_c) = \theta$ (we are identifying $S^1$ with $[0, 1]$ mod  $1$) and we are measuring in the positive direction.\\

Analogously to \cite{hertz2012maximizing} we conclude that $f$ is conjugate to $\hat{f}$, where $\hat{f}$ is a rigid rotation extension of the Anosov homeomorphism $f_{c}$. As $M$ is the unique  $u$-saturated invariant minimal set for $f$, by topological conjugacy we also have that $M$ is the unique  $u$-saturated invariant minimal set for $\hat{f}$.\ \

To show that $\mu$ is the unique  measure of maximal entropy, by the topological conjugacy of $f$ and $\hat{f}$ it is enough to show that $\hat{\mu}$ is the unique  measure of maximal entropy for $\hat{f}$. Since the quotient dynamics $f_c$ in $M/\mathcal{F}^{c}$ is a transitive hyperbolic homeomorphism, $f_c$ has a unique measure of maximal entropy that is locally the product of measures on the stable and unstable manifolds. Assume that $\nu$ is a  measure of maximal entropy  for $\hat{f}$. Then $\nu$ projects onto the unique measure of maximal entropy of $f_c$. As $M$ is the unique $u$-saturated minimal invariant set for $\hat{f}$, we have that conditional measures along central leaves have no atoms, are fully supported on $\mathcal{F}^{ c}$ and are invariant under  rotations. This implies that the conditional measures are Lebesgue and $\hat{\mu}=\nu$.\\

\noindent\textit{\textbf{Part 2: Twin measure construction and number of maximal measures}}\\

By the argument at the beginning of the proof of Theorem \ref{teo2}, let us assume that there are no measures of maximal entropy with zero central Lyapunov exponent.

Let $\mu$ be an ergodic maximal entropy measure with $\lambda_c(\mu)\neq 0$. We can assume that $\lambda_c(\mu)<0$. The Proposition \ref{Twin measure} implies the existence of another probability measure $f$-invariant $\nu$ which is isomorphic to $\mu$ and $\lambda_{c}(\nu) \geq 0$. By Corollary \ref{corolario unici} we have that $\mu$ is the only measure of maximal entropy with a non-positive central exponent. Thus, $\lambda(\nu)>0$.

Now we claim that $\nu$ is the unique ergodic measure of maximal entropy with positive center Lyapunov exponent. By contradiction, suppose there is another ergodic measure of maximal entropy $\eta$ with a positive central exponent. Substituting $f$ by $f^{-1}$ in the Proposition \ref{Twin measure} we obtain $\mu_1$, $\mu_2$ ergodic measures of maximal entropy for $f$ with  negative center Lyapunov exponents and $Z_1,Z_2\subset M$ such that $\nu(Z_1)=1=\eta(Z_2)$, $\beta_1:(Z_1,\nu)\rightarrow (\beta_1(Z_1),\mu_{1})$ and $\beta_2:(Z_2,\eta)\rightarrow (\beta_2(Z_2),\mu_2)$ as in Proposition \ref{Twin measure}. Due to the uniqueness of maximal  entropy with a negative central exponent (Proposition \ref{PROPneg}) we have that $\mu_1=\mu=\mu_2$. So $\mu(Z)=1$ where $Z=\beta_1(Z_1)\cap \beta_2(Z_2)$. Therefore, for all $z\in Z$ we have $\beta_{1}^{-1}(z),\beta_{2}^{-1}(z)\in (z,\sup \mathcal{ W}^{c}_{s}(z,f)]_c$. This implies that 
$$\beta_{2}^{-1}(z)\in \mathcal{W}^{c}_{s }(\beta_{1}^{-1}(z),f^{-1}) \text{ for }  \mu\text{-a.e. } z\in Z.$$

As forward and backward generic points of ergodic measures have full measure, We may take $z$ such that both $\beta_{2}^{-1}(z)$ and $\beta_{1}^{-1}(z)$ are respectively generic points of $\eta$ and $\nu.$ This implies that $\nu=\eta$.

Therefore, $f$ has exactly two ergodic measures of maximal entropy  $\mu^{-}:=\mu,$ and $\mu ^{+}=\nu$, where $\mu^{-}$ has a negative central exponent and $\mu^{+}$ has a positive central exponent. This concludes the proof of the first part of Theorem \ref{teo2}.\\

\subsection{Proof of Corollary \ref{corolario de teo}}

By contradiction, suppose that $\sharp \Gamma^{s}(f)>1$. Take $K_1, K_2 \in \Gamma^{s}(f)$ such that $K_1 \neq K_2$. It is clear that $K_i \neq M$ for $i=1,2$. By applying Lemma \ref{L1}, we conclude the existence of two distinct ergodic measures of maximal $u$-entropy for $f^{-1}$, denoted as $\mu_1$ and $\mu_2$, where $\operatorname{supp}(\mu_1)=K_1$ and $\supp(\mu_2)=K_2$, consequently $\mu_1 \neq \mu_{2}$. Since $\mme^{u}(f^{-1})\subseteq \mme(f^{-1})=\mme(f)$ we have that $\mu_1$ and $\mu_2$ are different measures of maximal entropy for $f$. By Theorem \ref{teo2}, one of those measures has  negative central Lyapunov exponent. We can suppose that $\lambda_{c}(\mu_1)<0$. By Proposition \ref{saturado} we have that $\supp(\mu_1)$ is a $u$-saturated set. Since $\Gamma^{u}(f)=M$, we have that $\supp(\mu_1)=M$. This implies that $K_1=M$. This contradiction implies that $\sharp \Gamma^{s}(f)=1$.

\hspace{12cm} $\quad\square$\\

\subsection{Proof of Corollary \ref{outro corolario de teo}}

Let $\mu$ be an ergodic measure of maximal entropy with vanishing center exponent. By Theorem \ref{teo2} $f$ has a unique measure of maximal entropy with full support and this also holds for $f^{-1}$. So, we conclude that $f^{-1}$ has no proper minimal invariant $u-$saturated set. Indeed, if this were the case, by Lemma \ref{L1}, $f^{-1}$ would admit a measure of maximal entropy with proper support.

\bibliographystyle{abbrv}
\bibliography{references}

\end{document}